\DeclareMathAlphabet{\mathcalligra}{T1}{calligra}{m}{n}
\DeclareFontShape{T1}{calligra}{m}{n}{<->s*[1.1]callig15}{}
\pgfplotsset{compat=newest}
\crefname{equation}{equation}{equations}
\crefname{theorem}{theorem}{theorems}
\crefname{chapter}{chapter}{chapters}
\crefname{figure}{figure}{figures}
\crefname{conject}{conjecture}{conjectures}
\crefname{lemma}{lemma}{lemma}
\crefname{corollary}{corollary}{corollaries}
\crefname{section}{section}{sections}
\crefname{definition}{definition}{definitions}
\crefname{condition}{condition}{conditions}
\crefname{test}{test}{tests}
\crefname{table}{table}{tables}
\crefname{remark}{remark}{remarks}
\crefname{problem}{problem}{problems}
\crefname{proposition}{proposition}{propositions}
\crefname{algocf}{algorithm}{algorithms}
\newtheorem{condition}{Condition}%
\newtheorem{test}{Test}%
\let\inf\relax \DeclareMathOperator*\inf{\vphantom{p}inf}
\let\min\relax \DeclareMathOperator*\min{\vphantom{p}min}
\let\max\relax \DeclareMathOperator*\max{\vphantom{p}max}
\let\subset\relax \DeclareMathOperator{\subset}{\subseteq}
\let\tilde\widetilde
\DeclareMathOperator*{\argmin}{arg\,min}
\newcommand{\E}{\mathbb{E}} %
\newcommand{\R}{\mathbb{R}} %
\newcommand{\dd}{\,\mathrm{d}} %
\newcommand{\sfN}{\mathsf{N}}
\newcommand{\mcO}{\mathcal{O}}
\newcommand{\mcR}{\mathcal{R}}
\newcommand{\scB}{\mathscr{B}}
\newcommand{\bbE}{\mathbb{E}}
\newcommand{\bbP}{\mathbb{P}}
\newcommand{\bfe}{\mathbf{e}}
\newcommand{\bff}{\mathbf{f}}
\newsavebox{\measure@tikzpicture}
		\def\tikz@width{#1}%
		\def\tikzscale{1}\begin{lrbox}{\measure@tikzpicture}%
			\tikzset{external/export next=false,external/optimize=false}%
		\edef\tikzscale{\pgfmathresult}%
\definecolor{color0}{rgb}{0.7843, 0.7843, 0.7843}
\definecolor{color1}{rgb}{0, 0.4470, 0.7410}
\definecolor{color2}{rgb}{0.8500, 0.3250, 0.0980}
\definecolor{color3}{rgb}{0.9290, 0.6940, 0.1250}
\definecolor{color4}{rgb}{0.7060, 0.3840, 0.7650}
\definecolor{color5}{rgb}{0.4660, 0.6740, 0.1880}
\definecolor{color6}{rgb}{0.3010, 0.7450, 0.9330}
\definecolor{color7}{rgb}{0.6350, 0.0780, 0.1840}
\definecolor{color8}{rgb}{0.0, 0.4078, 0.3412}
\pgfplotsset{
  log x ticks with fixed point/.style={
      xticklabel={
        \pgfkeys{/pgf/fpu=true}
        \pgfmathparse{exp(\tick)}%
        \pgfmathprintnumber[fixed relative, precision=3]{\pgfmathresult}
        \pgfkeys{/pgf/fpu=false}
      }
  },
  log y ticks with fixed point/.style={
      yticklabel={
        \pgfkeys{/pgf/fpu=true}
        \pgfmathparse{exp(\tick)}%
        \pgfmathprintnumber[fixed relative, precision=3]{\pgfmathresult}
        \pgfkeys{/pgf/fpu=false}
      }
  }
}
\newcommand{\CVaR}{\ensuremath{\mathrm{CVaR}}}
\newcommand{\var}{\ensuremath{x}}
\newcommand{\vardim}{\ensuremath{n}}
\newcommand{\varopt}{\ensuremath{\var^*}}
\newcommand{\its}{\ensuremath{k}}
\newcommand{\varit}{\ensuremath{\var_\its}}
\newcommand{\varitnext}{\ensuremath{\var_{\its+1}}}
\newcommand{\rv}{\ensuremath{\xi}}
\newcommand{\obj}{\ensuremath{f}}
\newcommand{\Obj}{\ensuremath{F}}
\newcommand{\gradC}{R}
\newcommand{\subsampling}{\ensuremath{{S_\its}}}
\newcommand{\gradCsub}{\ensuremath{\gradC_{\subsampling}}}
\newcommand{\proj}{P}
\pgfplotsset{
    legend image with text/.style={
        legend image code/.code={%
            \node[anchor=center] at (0.3cm,0cm) {#1};
        }
    },
}
\pgfplotsset{select coords between index/.style 2 args={
    x filter/.code={
        \ifnum\coordindex<#1\def\pgfmathresult{}\fi
        \ifnum\coordindex>#2\def\pgfmathresult{}\fi
    }
}}
\crefname{hypothesis}{Hypothesis}{Hypotheses}
\title{Adaptive Sampling Strategies for Risk-Averse Stochastic Optimization with Constraints\thanks{Submitted to the editors \today.
\funding{This project received funding from the European Union's Horizon 2020 research and innovation programme under grant agreement No. 800898 as well as partial support from the German Research Foundation by grant WO671/11-1. In addition, much of the manuscript was written while the second author was in residence at the Institute for Computational and Experimental Research in Mathematics (ICERM) in Providence, RI, during the Advances in Computational Relativity program, supported by the National Science Foundation under Grant No. DMS-1439786. The first author gratefully acknowledges support from the International Research Training Group IGDK, funded by the German Science Foundation (DFG) and the Austrian ScienceFund (FWF).}}}
\author{
	Florian~Beiser\thanks{\protect
		Mathematics and Cybernetics, SINTEF Digital, Forskningsveien 1, 0373 Oslo, NO (\email{florian.beiser@sintef.no})
	}%
	\and Brendan~Keith\thanks{\protect
		Division of Applied Mathematics,
    Brown University,
    Providence, RI 02912 USA
    (\email{brendan\_keith@brown.edu}).
	}%
	\and Simon~Urbainczyk\thanks{\protect
		Maxwell Institute for Mathematical Sciences and Department of Actuarial Mathematics and Statistics, Heriot-Watt University, Edinburgh EH14 4AS, UK (\email{su2004@hw.ac.uk})
	}%
	\and Barbara~Wohlmuth\thanks{\protect
		Chair for Numerical Mathematics, Department of Mathematics, Technical University of Munich, Boltzmannstra{\ss}e 3, 80333 Munich, DE (\email{wohlmuth@ma.tum.de})
	}%
}
\newcommand{\revised}[1]{{#1}}
\newcommand{\revisedBW}[1]{{#1}}
\begin{document}

\maketitle

\begin{abstract}
\revised{We introduce adaptive sampling methods for stochastic programs with deterministic constraints.
First, we propose and analyze a variant of the stochastic projected gradient method where the sample size used to approximate the reduced gradient is determined on-the-fly and updated adaptively.
This method is applicable to a broad class of expectation-based risk measures and leads to a significant reduction in the individual gradient evaluations used to estimate the objective function gradient.
Numerical experiments with expected risk minimization and conditional value-at-risk minimization support this conclusion and demonstrate practical performance and efficacy for both risk-neutral and risk-averse problems.
Second, we propose an SQP-type method based on similar adaptive sampling principles.
The benefits of this method are demonstrated in a \revisedBW{simplified} engineering design application featuring risk-averse shape optimization of a steel shell structure subject to uncertain loading conditions and model uncertainty.}
\end{abstract}

\begin{keywords}
   stochastic optimization, sample size selection, constrained optimization, portfolio optimization, shape optimization
\end{keywords}

\begin{AMS}
	90C15,   %
	90C55,   %
	62P30,   %
	35Q93,   %
   49Q10    %
\end{AMS}

\section{Introduction} %
\label{sec:introduction}

In this article, we consider the following general class of stochastic programs:
\begin{equation}
\min_{\var\in C}~ \Big\{ F(\var) = \mathcal{R}[\obj(\var;\rv)] \Big\}
.
\label{eq:P_general}
\end{equation}
Here, $\obj:\R^n\to\R$ is a smooth function, $\rv$ is a random variable on a probability space denoted $(\Xi,\scB,\bbP)$, $C\subset \R^n$ is a closed subdomain, and $\mcR:L^1(\Xi,\scB,\bbP)\to \R$ is a \emph{coherent risk measure} \cite{artzner1999coherent}.
A canonical example of the objective function $F(\var)$ is the expected value of $f$ at $\var$; namely,
\begin{equation}
	\bbE[\obj(\var;\rv)]
	=
	\int_\Xi \obj(\var;\rv) \dd \bbP(\rv)
	.
\label{eq:ExpectedValue}
\end{equation}
In this situation, $\mcR = \bbE$ and we say that $F(x) = \bbE[\obj(\var;\rv)]$ defines the expected risk at $x$.

With the expected value risk measure, $\mcR = \bbE$, the stochastic program~\cref{eq:P_general} only seeks out the point $x = x^\ast$ which minimizes $f(x)$ on average.
More general risk measures are often used when it is desirable to optimize for low probability events.
The other risk measure considered in this work is the conditional value-at-risk ($\CVaR$) \cite{Rockafellar2000,rockafellar2002conditional}.
The $\CVaR$ at confidence level $\beta\in (0,1)$, denoted $\CVaR_\beta$, is a well-established decision-making tool in finance \cite{krokhmal2002portfolio,Shapiro2009} and is becoming increasingly prominent in engineering \cite{TyrrellRockafellar2015,kouri2016risk,yang2017algorithms,Kouri2018a,chaudhuri2020risk,chaudhuri2020multifidelity}.
In this work, stochastic programs featuring the risk measure $\mcR = \bbE$ are referred to as \emph{risk-neutral}, meanwhile, those involving the risk measure $\mcR = \CVaR_\beta$, are referred to as \emph{risk-averse}.

\revised{In many practical problems, the integral in~\cref{eq:ExpectedValue} cannot be computed directly because the space $\Xi$ is high-dimensional \cite{Kouri2018,bottou2018optimization}.}
One common approach to approximate the integral is to draw a set of i.i.d. samples $S = \{\xi_i\}$, $i=1,\ldots,N$, of the random variable $\xi$ and substitute $F(x)$ with the following empirical estimate of the expected risk:
\begin{equation}
	F_S(\var)
	=
	\frac{1}{N}
	\sum_{i=1}^N
	\obj(\var;\rv_i)
	.
\label{eq:ExpectedValue_samples}
\end{equation}
When the sample set $S$ is fixed, stochastic optimization methods which employ this type of approximation of the objective function are commonly referred to as sample average approximation methods \cite{Shapiro2009,Royset2013,Kouri2018}.
\revised{In this paper, we design algorithms that \emph{adaptively} determine the \revisedBW{size of the} set $S$ in each iteration.}

\revised{Since in high-dimensional settings the computational cost of optimizing \cref{eq:P_general} typically hinges on the total number of gradient evaluations of $f$, we aim at keeping this number as small as possible. To this end,}
we propose a sampling strategy which adaptively balances the algorithm's sampling error and optimization error throughout the entire optimization process.
The strategy works by updating the size of the sample set $S = S_k$ together with the point $x = x_{k}$.
More precisely, at each iteration $k$, we generate a sample set $S_k$ based on gradient evaluations at $x_k$ to compute an updated point $x_{k+1}$.
This leads to robust and practical methods that can treat many
stochastic optimization problems efficiently.

\subsection{Literature review and motivation}
\label{subsec:literature}

There are many articles on stochastic optimization methods with dynamic sample sizes \cite{homem2003variable,friedlander2012hybrid,byrd2012sample,Royset2013,kouri2013trust,cartis2018global,pasupathy2018sampling,roosta2019sub,de2017automated,Bollapragada2018,Bollapragada2018a,bottou2018optimization,Bollapragada2019,paquette2020stochastic}.
Nevertheless, very few of these works consider \emph{constrained} optimization problems or risk-averse settings in detail \cite{Royset2013}; the majority of the present literature focuses on \emph{unconstrained} stochastic programs, such as those commonly found in machine learning.
One notable exception is the recent contributions by Xie et al. \cite{xie2020constrained,xie2021methods}, which appeared online shortly after an earlier version of this work \cite{beiser2020adaptive_v1} and complements our contribution by, among other novelties, introducing alternative adaptive sampling strategies with separate convergence proofs as well as analyzing composite optimization problems.
A thorough comparison of this work and \cite{xie2020constrained,xie2021methods} is given at multiple points later in the text; see~\Cref{rem:ComparisontoXie1,rem:ComparisontoXie2,rem:ComparisontoXie3}.

The difficulty in generalizing previous work on adaptive sampling to constrained optimization problems lies in developing new critera to quantify and balance the statistical and optimization errors, while accounting for the influence of the constraint set.
We refer the interested reader to \cite[Section~1]{xie2020constrained} for an overview of the pitfalls of applying well-established adaptive sampling strategies designed for unconstrained problems to the constrained setting.
For constrained stochastic programs, most contemporary methods rely on \textit{a priori} error analysis that results in a prescribed growth in the sample size \cite{friedlander2012hybrid,Royset2013,Bollapragada2019}.
\revised{In this work, as was}
also done in \cite{xie2020constrained,xie2021methods}, we choose to estimate the correct sample size \textit{a posteriori} and update it adaptively.

Our work has a great deal in common with the adaptive sampling approaches taken in \cite{byrd2012sample,Bollapragada2018,Bollapragada2018a}. %
In order to highlight the primary similarities, we note that in our approach to (stochastic) projected gradient descent (\Cref{sub:idealized_algorithm,sub:a_less_than_ideal_algorithm} and \Cref{sub:a_practical_algorithm}), we arrive at a condition similar to the ``norm test'' introduced for unconstrained optimization in \cite{carter1991on}, and later used in \cite{byrd2012sample}.\footnote{The words ``norm test'' are not actually used in either \cite{carter1991on} or \cite{byrd2012sample}, but recent works by the authors of \cite{byrd2012sample} have promoted this terminology; see, e.g., \cite{Bollapragada2018a,xie2020constrained}.}
A similar test also appears in our sequential quadratic programming (SQP) algorithm (\revised{cf.} \Cref{sec:nonconvex-applications}).

Another approach to deal with stochastic programs with deterministic constraints appeared online a few months after the initial version of this article \cite{na21,na21_b}. While \cite{xie2020constrained} present an adaptive sampling algorithm that shows similar properties to our methods, Na et al. \cite{na21} use an indepedent approach to develop a novel stochastic line search procedure and associated stochastic SQP algorithm that can also be extended to work for inequality-constrained stochastic programs based on active-set strategies \cite{na21_b}.

The present work arose from a need to develop efficient stochastic programming methods for large-scale decision-making problems; especially in engineering design, where each individual sample computation is extremely costly \cite{kouri2013trust,Shi2018,Geiersbach2020,Ion2018,zou2019adaptive}.
In these high-cost scenarios, one wishes to evaluate as few samples as possible.
It is well-established that the expected risk~\cref{eq:ExpectedValue} is often unsuitable to predict immediate and long-term performance, manufacturing and maintenance costs, system response, levels of damage, and numerous other quantities of interest \cite{Rockafellar2010,kouri2016risk,Kouri2018,Kouri2018a}.
Therefore, today's industrial problems are made even more challenging because they typically require a risk-averse formulation \cite{kodakkal2022riskaverse,TyrrellRockafellar2015}.

\subsection{Layout}
Apart from the expected value operator $\bbE$, the conditional value-at-risk is the only risk measure we consider in detail.
It is well-known that this risk measure can be reformulated as a separate optimization problem involving $\bbE$; cf. \cite{Rockafellar2000,rockafellar2002conditional} and \Cref{sec:risk_averse_problems}. 
This observation informs the layout of the paper by allowing us to first focus on the case $\mathcal{R}=\E$ and then deal with the treatment of risk-averse problems in the later sections.
A large family of other important risk measures, including the entropic risk and the conditional entropic risk \cite{Kouri2018a}, have a similar reformulation involving $\bbE$ \cite{rockafellar2013fundamental,Kouri2018a}, which leads us to conclude that there is little loss of generality in treating~\cref{eq:P_general} in this incremental and case-specific way.
Likewise, in order to develop our sample size conditions and then analyze the corresponding algorithms, we begin with a \emph{convex} constraint set $C\subset \mathbb{R}^n$.
\revised{Later on, we give an outlook on how the algorithm can be generalized to treat non-convex equality constraints.}

In~\Cref{sec:adaptive_sampling}, we use the expected risk problem to introduce basic adaptive sampling principles for stochastic projected gradient descent (SPGD) and theoretical conditions which imply convergence.
We then use these conditions in \Cref{sub:a_practical_algorithm} to propose a simple SPGD algorithm that solves the expected risk problem with convex constraints.
In \Cref{sec:risk_averse_problems}, we present two ways in which these algorithms may be extended to handle risk-averse problems.
Here, we focus on the $\CVaR$ risk measure and state consequences for other important risk measures only in passing.
\Cref{sec:applications} is dedicated to in-depth numerical studies which test the efficacy of our adaptive sampling method in its various forms.
\revised{Afterwards, in \Cref{sec:nonconvex-applications}, we generalize the algorithm using SQP principles and arrive at a new method appropriate for an important class of problems with non-convex constraints. This is complemented by a complex numerical example from engineering design.}
The paper then closes with a short summary of results.
Finally, additional numerical experiments are documented in \Cref{appendix:numerical_examples}.
\section{Adaptive sampling with convex constraints} %
\label{sec:adaptive_sampling}

In this and the following section, we only consider $\mcR = \bbE$.
This setting allows~\cref{eq:P_general} to be rewritten as
\begin{equation}
\min_{\var\in C}~ \Big\{ F(\var) = \bbE[\obj(\var;\rv)] \Big\}
.
\label{eq:P-C}
\end{equation}
For the time being, we also assume that $C\subset\R^n$ is convex.
To treat this problem, we propose a projected gradient descent algorithm and sufficient conditions on the sample sets $S_k$, which guarantee that it is a descent method in expectation.

\subsection{Preliminaries and notation} %
\label{sub:preliminaries_and_notation}

Let $\nabla F(\var)$ denote the gradient of $F$ at $\var$ and let $\langle \cdot, \cdot\rangle$ denote the $\ell^2$ inner product on vectors in $\R^n$.
It is well-known (see, e.g., \cite{Nesterov2018}) that if $F$ is both convex and continuously differentiable, then $x^\ast$ is a solution of~\cref{eq:P-C} if and only if
\begin{equation}
\label{eq:ConvexOptimalityCondition}
	\langle \nabla F(\varopt), \var - \varopt\rangle
	\geq 0
\end{equation}
for all $\var \in C$.

When $C = \R^n$, one may use the stochastic gradient descent algorithm, $\varitnext = \varit - \alpha \nabla \Obj_\subsampling(\varit)$, to uncover locally optimal solutions of~\cref{eq:P-C}; cf. \cite{byrd2012sample,Bollapragada2018a}.
Here, $\alpha > 0$ is a step-length parameter and $\nabla \Obj_\subsampling(\var_k)$ denotes the gradient of the sample average defined in~\cref{eq:ExpectedValue_samples} with an iteration-dependent sample set $S = S_k$.
When the convex set $C \neq \R^n$, the analogue of this approach is the stochastic projected gradient descent (SPGD) algorithm; $y_{k+1} = \varit - \alpha \nabla \Obj_\subsampling(\varit)$, $\varitnext = \argmin_{x\in C} \|y_{k+1} - x\|^2$, where $\|\cdot\|$ denotes the Euclidean norm.
Equivalently \cite{Nesterov2018}, we write
\begin{equation}
\label{eq:GeneralIteration}
	\varitnext
	=
	\argmin_{x\in C}
	\Big\{
	\Obj_\subsampling(\varit) + \langle \nabla \Obj_\subsampling(\varit),x-\varit \rangle + \frac{1}{2\alpha}\lVert x-\varit \rVert^2
	\Big\}
	.
\end{equation}

We will write $\mathbb{E}_k[\cdot]$ to denote the expected value operator~\cref{eq:ExpectedValue}, given $x_k$.
With this notation, quantities such as $\mathbb{E}_k[F(x_{k+1})]$ are well-defined because $x_{k+1}$ depends only on the random variable $\xi$ through~\cref{eq:ExpectedValue_samples,eq:GeneralIteration}.
We will also assume that $S_k$, for each $k$, is a set of i.i.d. samples, independent of each previous set $S_0,S_1,\ldots,S_{k-1}$.
With this assumption, $\nabla \Obj_\subsampling(\varit)$ forms a unbiased estimator for the gradient at $x_k$, namely,
\begin{equation}
	\mathbb{E}_k[\nabla F_{S_k}(x_{k})]
	=
	\nabla F(x_k)
	.
\label{eq:UnbiasedGradient}
\end{equation}
When we wish to analyze the \emph{total expectation} of an iteration-dependent quantity, say $\mathbb{E}[F(x_k)]$, we note that it is completely determined by the joint distribution of samples in $S_0,S_1,\ldots,S_{k-1}$.
For this reason, we have the identity \cite{bottou2018optimization}
\begin{equation}
	\mathbb{E}[F(x_k)]
	=
	\mathbb{E}_0\mathbb{E}_1 \cdots \mathbb{E}_{k-1}[F(x_k)]
	.
\label{eq:TotalExpectationIdentity}
\end{equation}

In the sequel, it will also be convenient to assign symbols to certain terms in the equations above.
First, we define the orthogonal projection onto $C$,
\begin{equation}
\label{eq:ProjectionOperator}
 	\proj(y)
 	=
 	\argmin_{x\in C}
 	\|y - x\|^2
 	.
\end{equation}
Note that because $C$ is convex, $\proj(y)$ is unique and non-expansive \cite[Corollary~2.2.3]{Nesterov2018}, namely,
\begin{equation}
\label{eq:non-expansive}
	\|\proj(x) - \proj(y)\|^2
	\leq
	\langle x-y, \proj(x) - \proj(y)\rangle
	\leq
	\|x - y\|^2
	,
\end{equation}
and generally it is \emph{non-linear}.
Next, we denote the projected gradient mapping, $Q:\R^\vardim\rightarrow C$, as $Q(\var) = \proj(\var-\alpha\nabla \Obj(\var))$.
Equivalently, one may write
\begin{equation}
	Q(\var)
	=
	\argmin_{y\in C}
	\Big\{
	\Obj(\var) + \langle \nabla \Obj(\var),y-\var \rangle + \frac{1}{2\alpha}\lVert y-\var \rVert^2
	\Big\}
	.
\label{eq:GradientMap}
\end{equation}
The subsampled gradient map is then defined analogously to~\cref{eq:GradientMap}; namely,
\begin{equation}
	Q_{S_k}(\var)
	=
	\argmin_{y\in C}
	\Big\{
	\Obj_\subsampling(\var) + \langle \nabla \Obj_\subsampling(\var),y-\var \rangle + \frac{1}{2\alpha}\lVert y-\var \rVert^2
	\Big\}
	.
\label{eq:GradientMap_sampled}
\end{equation}
With this notation in hand, one may note that $x_{k+1} = Q_{S_k}(x_k)$ by~\cref{eq:GeneralIteration}.

The reduced gradient, defined by
\begin{equation}
\label{eq:ReducedGradientDefinition}
	\gradC(\var)=\alpha^{-1}(\var - Q(\var)),
\end{equation}
is another important operator we will make judicious use of.
The subsampled reduced gradient is likewise defined $$\gradCsub(\var) = \alpha^{-1}(\var - Q_\subsampling(\var)).$$
Clearly, $x_{k+1} = x_k - \alpha R_{S_k}(x_k)$.
Moreover, when $C=\R^n$, one may note that $R(x_k) = \nabla F(x_k)$ and $R_{S_k}(x_k) = \nabla F_{S_k}(x_k)$.

We may now formulate the first-order optimality condition for~\eqref{eq:P-C} as follows \cite{Nesterov2018}:
\begin{equation}
	\text{If $\varopt$ is a minimizer of~\eqref{eq:P-C}, then $Q(\varopt) = \varopt$ and $\gradC(\varopt) = 0$}.
\end{equation}
We may also state two lemmas based on \cite{Nesterov2018}, which will be useful later on.
For reference, we say that $F$ is $L$-smooth if
\begin{equation}
	\|\nabla F(x) - \nabla F(y) \| \leq L\|x-y\|
	,
\label{eq:LSmooth}
\end{equation}
for all $x,y \in C$,
and we say that $F$ is $\mu$-strongly convex if
\begin{equation}
	F(y) \geq F(x) + \langle \nabla F(x) , y - x \rangle + \frac{\mu}{2}\| y - x \|^2
	.
\label{eq:MuStronglyConvex}
\end{equation}
The proof of~\Cref{lem:ConstrainedConvexOptResults} can be found in \cite[Corollary~2.3.2]{Nesterov2018}.
For the reader's convenience, we include the proof of~\Cref{lem:GradientBounds}.
\begin{lemma}
\label{lem:ConstrainedConvexOptResults}
Assume that $F$ is $L$-smooth, let $0 < \alpha \leq 1/L$ and let $C$ be convex.
If $F$ is convex, then the following inequality holds for all $x \in C$:
\begin{equation}
	F(Q(x)) - F(\var) \leq - \frac{\alpha}{2}\|\gradC(\var)\|^2
	.
\label{eq:LSmoothConvexityIdentity}
\end{equation}
If, moreover, $F$ is $\mu$-strongly convex, then it also holds that
\begin{equation}
	\frac{\mu}{2} \lVert \var - \varopt \rVert^2 + \frac{\alpha}{2} \lVert \gradC(\var) \rVert^2
	\leq
	\langle \gradC(\var), \var - \varopt \rangle
	.
\label{eq:StongConvexityIdentity}
\end{equation}
\end{lemma}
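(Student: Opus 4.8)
The plan is to derive both inequalities of \Cref{lem:ConstrainedConvexOptResults} from one ingredient: the first-order optimality condition characterizing the minimizer $Q(\var)$ of the strongly convex subproblem \cref{eq:GradientMap}. Differentiating its objective in $y$ and using that $C$ is convex gives the variational inequality
\begin{equation*}
	\langle \nabla F(\var) - \gradC(\var),\, y - Q(\var)\rangle \geq 0
	\quad \text{for all } y \in C,
\end{equation*}
where I have used the identities $Q(\var) - \var = -\alpha \gradC(\var)$ and $\gradC(\var) = \alpha^{-1}(\var - Q(\var))$. Two specializations do most of the work: $y = \var$ yields $\langle \nabla F(\var), \gradC(\var)\rangle \geq \|\gradC(\var)\|^2$, and $y = \varopt$ yields a relation among $\gradC(\var)$, $\nabla F(\var)$, and $\var - \varopt$. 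The remaining tool is the ``descent lemma'' $F(Q(\var)) \leq F(\var) + \langle \nabla F(\var), Q(\var) - \var\rangle + \tfrac{L}{2}\|Q(\var) - \var\|^2$, which follows from \cref{eq:LSmooth}, together with $\tfrac{L}{2} \leq \tfrac{1}{2\alpha}$ coming from $\alpha \leq 1/L$.

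To prove \cref{eq:LSmoothConvexityIdentity}, I would substitute $Q(\var) - \var = -\alpha \gradC(\var)$ into the descent lemma, bound $\tfrac{L\alpha^2}{2}\|\gradC(\var)\|^2 \leq \tfrac{\alpha}{2}\|\gradC(\var)\|^2$, and then invoke $\langle \nabla F(\var), \gradC(\var)\rangle \geq \|\gradC(\var)\|^2$ to conclude $F(Q(\var)) \leq F(\var) - \alpha\|\gradC(\var)\|^2 + \tfrac{\alpha}{2}\|\gradC(\var)\|^2$, which is the claim. Note that convexity of $F$ is not actually needed for this part, only $L$-smoothness of $F$ and convexity of $C$.

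To prove \cref{eq:StongConvexityIdentity}, I would take the $y = \varopt$ form of the variational inequality, substitute $\varopt - Q(\var) = (\varopt - \var) + \alpha\gradC(\var)$, and rearrange into a lower bound on $\langle \gradC(\var), \var - \varopt\rangle$ involving $\langle \nabla F(\var), \var - \varopt\rangle$, the term $-\alpha\langle \nabla F(\var), \gradC(\var)\rangle$, and $\alpha\|\gradC(\var)\|^2$. The first term is bounded below by $F(\var) - F(\varopt) + \tfrac{\mu}{2}\|\var - \varopt\|^2$ via $\mu$-strong convexity \cref{eq:MuStronglyConvex}. For the term $-\alpha\langle \nabla F(\var), \gradC(\var)\rangle$ I would use the descent lemma, rearranged, to get $-\alpha\langle \nabla F(\var), \gradC(\var)\rangle \geq F(Q(\var)) - F(\var) - \tfrac{\alpha}{2}\|\gradC(\var)\|^2$, and then $F(Q(\var)) \geq F(\varopt)$, since $Q(\var) \in C$ and $\varopt$ minimizes $F$ over $C$. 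Adding the two lower bounds, the $\pm(F(\var) - F(\varopt))$ contributions cancel and the $\|\gradC(\var)\|^2$ coefficients ($+\alpha$ and $-\tfrac{\alpha}{2}$) combine, leaving precisely $\tfrac{\mu}{2}\|\var - \varopt\|^2 + \tfrac{\alpha}{2}\|\gradC(\var)\|^2$.

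The main obstacle is the sign bookkeeping in the second part: bounding $-\alpha\langle \nabla F(\var), \gradC(\var)\rangle$ by the variational inequality points the wrong way, so one must route that term through the descent lemma together with the global optimality of $\varopt$; everything else is routine algebra. An alternative route for \cref{eq:StongConvexityIdentity} would be to apply the non-expansivity \cref{eq:non-expansive} of $\proj$ to the arguments $\var - \alpha\nabla F(\var)$ and $\varopt - \alpha\nabla F(\varopt)$ (recalling $\gradC(\varopt) = 0$), combined with strong convexity, but the bookkeeping is of comparable length.
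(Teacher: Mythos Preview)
Your proposal is correct. The paper does not give its own proof of \Cref{lem:ConstrainedConvexOptResults}; it simply cites \cite[Corollary~2.2.4]{Nesterov2018}. Your argument is essentially the standard one found there: both inequalities are consequences of the first-order optimality condition for the subproblem defining $Q(\var)$ (which is exactly \Cref{lem:GradientBounds} in the paper), combined with the descent lemma from $L$-smoothness and, for the second inequality, the strong-convexity lower bound together with $F(Q(\var)) \geq F(\varopt)$. Your side remark that convexity of $F$ is not used in the proof of \cref{eq:LSmoothConvexityIdentity} is also correct; only $L$-smoothness of $F$, convexity of $C$, and $\alpha \leq 1/L$ enter there.
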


\begin{lemma}
\label{lem:GradientBounds}
	Let $F$ be $L$-smooth and let $C$ be closed and convex.
	For all $y\in C$ and $z\in \R^n$, it holds that
	\begin{equation}
	\label{eq:GradientBounds}
		\langle \gradC(z), y - Q(z)\rangle
		\leq
		\langle \nabla\Obj(z), y - Q(z)\rangle
		.
	\end{equation}
\end{lemma}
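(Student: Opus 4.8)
The plan is to reduce \cref{eq:GradientBounds} to the variational inequality that characterizes the metric projection onto a closed convex set. First I would introduce the shorthand $w = z - \alpha\nabla\Obj(z)$, so that $Q(z) = \proj(w)$ by definition, and rewrite the desired inequality in the equivalent form
\begin{equation*}
	\langle \gradC(z) - \nabla\Obj(z),\, y - Q(z)\rangle \leq 0
	\qquad\text{for all } y\in C.
\end{equation*}

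The only real step is a short algebraic identity. Since $\gradC(z) = \alpha^{-1}(z - Q(z))$, one has
\begin{equation*}
	\gradC(z) - \nabla\Obj(z)
	= \alpha^{-1}\bigl(z - \alpha\nabla\Obj(z) - Q(z)\bigr)
	= \alpha^{-1}\bigl(w - \proj(w)\bigr).
\end{equation*}
Hence the left-hand side of the displayed inequality equals $\alpha^{-1}\langle w - \proj(w),\, y - \proj(w)\rangle$, and because $\alpha > 0$ it suffices to show $\langle w - \proj(w),\, y - \proj(w)\rangle \leq 0$ for every $y\in C$. For this I would invoke the first inequality in \cref{eq:non-expansive}, taking its second argument in $C$ (so that its projection is itself): for $y\in C$ that inequality reads $\|\proj(w) - y\|^2 \leq \langle w - y,\, \proj(w) - y\rangle$, which rearranges precisely to $\langle w - \proj(w),\, y - \proj(w)\rangle \leq 0$. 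Equivalently, this is the standard obtuse-angle/optimality property of the projection onto a closed convex set. Multiplying through by $\alpha^{-1}$ and combining with the identity above gives \cref{eq:GradientBounds}.

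I do not anticipate a genuine obstacle here: once the projection inequality is in hand the argument is two lines, and the only points requiring care are the algebraic rearrangement $\gradC(z) - \nabla\Obj(z) = \alpha^{-1}(w - \proj(w))$ and the sign of $\alpha$. It is worth remarking that the $L$-smoothness hypothesis is not actually used — only the closedness and convexity of $C$ — so the lemma could in fact be stated and invoked in that slightly more general form.
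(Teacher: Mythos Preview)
Your proof is correct and follows essentially the same route as the paper's own argument. The paper defines the quadratic model $\phi(y) = F(z) + \langle \nabla F(z), y-z\rangle + \tfrac{1}{2\alpha}\|y-z\|^2$, computes $\nabla\phi(Q(z)) = \nabla F(z) - R(z)$, and invokes the first-order optimality condition~\cref{eq:ConvexOptimalityCondition} for the minimizer $Q(z)$ of $\phi$ over $C$; you instead write $Q(z) = P(w)$ with $w = z - \alpha\nabla F(z)$ and invoke the projection variational inequality extracted from~\cref{eq:non-expansive}. These are two packagings of the same inequality, and your observation that the $L$-smoothness hypothesis is unused is accurate.
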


\begin{proof}
	Fix $z\in \R^n$ and consider $\phi(y) = \Obj(z) + \langle \nabla \Obj(z),y-z\rangle  + \frac{1}{2\alpha}\| y-z \|^2$.
	Note that $\phi$ is both convex and continuously differentiable and that $\nabla \phi(y) = \nabla \Obj(z) + \frac{1}{\alpha}(y-z)$.
	Therefore, following from the optimality condition~\cref{eq:ConvexOptimalityCondition} and the definition of $Q(\cdot)$ in~\cref{eq:GradientMap}, it holds that
	\begin{equation}
		\langle \nabla\phi(Q(z)), y - Q(z) \rangle
		=
		\langle \nabla\Obj(z) - \gradC(z), y - Q(z) \rangle
		\geq
		0
	\end{equation}
	for all $y\in C$.
\end{proof}

\subsection{Descent conditions} %
\label{sub:idealized_algorithm}

The goal of our adaptive sampling scheme is to balance sampling and optimization error.
One way to strike this balance is through state-dependent conditions which ensure that $\bbE_k\big[F(x_{k+1})\big] \leq F(x_k)$.
In~\Cref{thm:Descent}, we show that it is sufficient that each sample set $S_k$ satisfies only two idealized conditions.
However, we note that these conditions require foreknowledge of the exact gradient at each iterate, $\varit$.
(This detail is dealt with in \Cref{sub:a_practical_algorithm}.)
The descent conditions are:
\smallskip
\begin{condition}
\label{con:Norm}
	Control of the norm of the reduced gradient:
	\begin{equation}
	\label{eq:ExactReducedGradientControl}
		\bbE_k\left[\lVert \gradCsub(\varit)\rVert^2\right]
		\leq
		(1+\nu^2)
		\lVert \gradC(\varit) \rVert^2
		,
	\end{equation}
	for some fixed $\nu > 0$.
\end{condition}
\smallskip 
\begin{condition}
\label{con:Bias}
	Control of the bias in the projected gradient mapping:
	\begin{equation}
	\label{eq:ExactBiasControl}
		\|\bbE_k[Q_{S_k}(\varit) - Q(\varit)]\|
		\leq
		\frac{\gamma^2}{2}
		\lVert Q(\varit) - \varit \rVert^2
		,
	\end{equation}
	for some fixed $\gamma > 0$.
\end{condition}
\smallskip 

\begin{theorem}
\label{thm:Descent}
	Assume that $F$ is $L$-smooth~\cref{eq:LSmooth} and that the sequence of iterates $\{\varit\}_{k=0}^\infty$ is contained in an open set over which $\|\nabla F(x)\|$ is bounded above by some constant $M>0$.
	If $S_k$ satisfies \Cref{con:Norm,con:Bias} and $\alpha \leq \frac{2}{M\gamma^2 + L(1 + \nu^2)}$, then
	\begin{equation}
		\bbE_k\big[F(x_{k+1})\big]
		\leq
		F(x_k)
		.
	\end{equation}
\end{theorem}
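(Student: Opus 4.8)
The plan is to combine the standard quadratic upper bound for $L$-smooth functions with the two descent conditions, using \Cref{lem:GradientBounds} to control the resulting cross term. Since $F$ is $L$-smooth and $C$ is convex, the segment $[x_k,x_{k+1}]$ lies in $C$ (recall $x_k,x_{k+1}\in C$), so
\[
	F(x_{k+1}) \leq F(x_k) + \langle \nabla F(x_k), x_{k+1}-x_k\rangle + \tfrac{L}{2}\|x_{k+1}-x_k\|^2 .
\]
Substituting $x_{k+1} = Q_{S_k}(x_k) = x_k - \alpha R_{S_k}(x_k)$ and taking the conditional expectation $\bbE_k[\cdot]$ gives
\[
	\bbE_k[F(x_{k+1})] \leq F(x_k) - \alpha\,\langle \nabla F(x_k), \bbE_k[R_{S_k}(x_k)]\rangle + \tfrac{L\alpha^2}{2}\,\bbE_k\big[\|R_{S_k}(x_k)\|^2\big],
\]
and \Cref{con:Norm} bounds the last term by $\tfrac{L\alpha^2}{2}(1+\nu^2)\|R(x_k)\|^2$.

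The core of the argument is the cross term. I would decompose $\bbE_k[R_{S_k}(x_k)] = R(x_k) + \alpha^{-1}\big(Q(x_k) - \bbE_k[Q_{S_k}(x_k)]\big)$, which follows directly from the definitions of the reduced gradient and its subsampled analogue. For the $R(x_k)$ contribution, apply \Cref{lem:GradientBounds} with $z = x_k$ and $y = x_k \in C$; since $x_k - Q(x_k) = \alpha R(x_k)$, this yields $\alpha\|R(x_k)\|^2 \leq \alpha\langle \nabla F(x_k), R(x_k)\rangle$, i.e. $\langle \nabla F(x_k), R(x_k)\rangle \geq \|R(x_k)\|^2$. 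For the remaining (bias) contribution, use Cauchy–Schwarz, the hypothesis $\|\nabla F(x_k)\| \leq M$, and then \Cref{con:Bias} together with $\|Q(x_k)-x_k\|^2 = \alpha^2\|R(x_k)\|^2$ to obtain $-\langle \nabla F(x_k), Q(x_k) - \bbE_k[Q_{S_k}(x_k)]\rangle \leq M\tfrac{\gamma^2}{2}\alpha^2\|R(x_k)\|^2$. Note the factor $\alpha^{-1}$ from the decomposition cancels against the leading $\alpha$ on this term.

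Collecting the three estimates gives
\[
	\bbE_k[F(x_{k+1})] \leq F(x_k) - \alpha\|R(x_k)\|^2\Big(1 - \tfrac{\alpha}{2}\big(M\gamma^2 + L(1+\nu^2)\big)\Big),
\]
and the stepsize hypothesis $\alpha \leq \tfrac{2}{M\gamma^2 + L(1+\nu^2)}$ makes the parenthesized factor nonnegative, which proves the claim.

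I do not expect a serious obstacle here: the only points needing care are the bookkeeping of the $\alpha^{-1}$ factor in the decomposition of $\bbE_k[R_{S_k}(x_k)]$, and getting the sign right when invoking \Cref{lem:GradientBounds} — one must apply it at the admissible choice $y = x_k$, which is legitimate precisely because every iterate lies in $C$. Everything else is routine manipulation, and in particular convexity of $F$ is not needed, only $L$-smoothness of $F$ and convexity of $C$.
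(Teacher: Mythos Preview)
Your proof is correct and follows essentially the same route as the paper: the paper also starts from the $L$-smoothness quadratic upper bound, splits the inner-product term via $x_{k+1}-x_k = (Q(x_k)-x_k) + (Q_{S_k}(x_k)-Q(x_k))$, applies \Cref{lem:GradientBounds} with $y=z=x_k$ to obtain $\langle \nabla F(x_k), Q(x_k)-x_k\rangle \leq -\alpha\|R(x_k)\|^2$, and bounds the remaining two pieces with \Cref{con:Bias,con:Norm}, arriving at the identical inequality $\bbE_k[F(x_{k+1})]-F(x_k)\leq -\alpha\|R(x_k)\|^2\big(1-\tfrac{\alpha}{2}(M\gamma^2+L(1+\nu^2))\big)$. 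The only cosmetic difference is that you take the conditional expectation before decomposing, and phrase everything in terms of $R(x_k)$ rather than $Q(x_k)-x_k$.
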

\begin{proof}
		By standard arguments following from the $L$-smoothness of $F$ \cite{bertsekas2015convex}, we have that
	\begin{align}
		F(x_{k+1})
		&\leq
		F(x_k)
		+
		\langle \nabla F(x_k), x_{k+1} - x_k \rangle
		+
		\frac{L}{2}\|x_{k+1} - x_k\|^2
		\\
		&=
		F(x_k)
		+
		\langle \nabla F(x_k), Q(x_k) - x_k \rangle
		+
		\langle \nabla F(x_k), E_{S_k}(x_k) \rangle
		+
		\frac{L}{2}\|x_{k+1} - x_k\|^2
		,
	\label{eq:DescentTheorem_eq1}
	\end{align}
	where $E_{S_k}(x_k) = x_{k+1} - Q(x_k)$.
	Now, substituting $y = z = x_k$ into~\cref{eq:GradientBounds}, we arrive at the identity
	\begin{equation}
		\langle \nabla F(x_k), Q(x_k) - x_k \rangle
		\leq
		-\langle \gradC(x_k), x_k - Q(x_k)\rangle
		=
		-
		\frac{1}{\alpha}
		\|x_k - Q(x_k)\|^2
		.
	\label{eq:DescentTheorem_eq2}
	\end{equation}
	Next, \Cref{con:Bias} implies that
	\begin{equation}
		\bbE_k\big[\langle\nabla\Obj(\varit), E_{S_k}(x_k) \rangle\big]
		\leq
		\|\nabla\Obj(\varit)\| \|\bbE_k[E_{S_k}(x_k)]\|
		\leq
		\frac{M}{2}\gamma^2
		\lVert Q(x_k) - x_k \rVert^2
		.
	\label{eq:DescentTheorem_eq3}
	\end{equation}
	Moreover, \Cref{con:Norm} implies that
	\begin{equation}
		\bbE_k\big[\|x_{k+1} - x_k\|^2\big]
		=
		\alpha^2 \bbE_k\big[\|R_{S_k}(x_k)\|^2\big]
		\leq
		\alpha^2(1+\nu^2) \|R(x_k)\|^2
		=
		(1+\nu^2) \|Q(x_k) - x_k\|^2
		.
	\label{eq:DescentTheorem_eq4}
	\end{equation}
	Combining~\cref{eq:DescentTheorem_eq1,eq:DescentTheorem_eq2,eq:DescentTheorem_eq3,eq:DescentTheorem_eq4}, we arrive at
	\begin{align}
	\label{eq:DescentTheorem_eq5}
		\bbE_k[F(x_{k+1})] - F(x_k)
		&\leq
		-c
		\|Q(x_k) - x_k\|^2
		,
	\end{align}
	where $c = \frac{1}{\alpha} - \frac{L}{2}(1+\nu^2) - \frac{M}{2}\gamma^2$.
	Note that if $\alpha \leq \frac{2}{M\gamma^2 + L(1 + \nu^2)}$, then the right-hand side of~\cref{eq:DescentTheorem_eq5} is non-positive.
\end{proof}

Although \Cref{con:Norm,con:Bias} are simple to write out, it is unfortunately difficult to design practical algorithms \revisedBW{that guarantee them strictly}.
\revised{
This difficulty is due in part to the presence of the projection operator $P\colon \R^n \to C$ \revisedBW{inside} the expected values on the left-hand sides of~\cref{eq:ExactReducedGradientControl,eq:ExactBiasControl}.
Therefore, checking these conditions would require numerous applications of $P$, which may be prohibitively expensive.
To avoid this difficulty, we turn to an alternative condition in the next subsection.
}

\revised{
\begin{remark}
	It is interesting to relate \Cref{con:Norm,con:Bias} to the analysis of stochastic gradient descent with adaptive sampling for unconstrained problems.
	In doing so, \Cref{con:Norm} can be viewed as a generalization of \cite[Equation~3.5]{Bollapragada2018a}, which is one of the key inequalities in that work.
	Moreover, we note that \Cref{con:Bias} is trivially satisfied for all $\gamma\geq 0$, whenever $C$ is \revisedBW{an affine subspace} of $\R^n$.
\end{remark}
}

\begin{remark}
	The parameters in \Cref{con:Norm,con:Bias} are defined so that if $f\colon \mathbb{R}^n\to\mathbb{R}$ is a deterministic function, then both conditions hold for all non-negative parameter values $\nu,\gamma \geq 0$.
	\revised{
	Moreover, by setting $\nu=\gamma=0$, we can recover~\Cref{lem:ConstrainedConvexOptResults}.
	To observe this fact, we must inspect the proof of \Cref{thm:Descent} and, in particular, inequality~\cref{eq:DescentTheorem_eq5}.
	Here, we see that if $\alpha \leq \frac{1}{M\gamma^2 + L(1 + \nu^2)}$, then $c \geq \frac{1}{2\alpha}$.
	With this stronger condition, we may write
	\begin{equation}
	\label{eq:DescentRemark}
		\bbE_k[F(x_{k+1})] - F(x_k)
		\leq
		-\frac{\alpha}{2}
		\|R(x_k)\|^2
		,
	\end{equation}
	which is analogous to~\cref{eq:LSmoothConvexityIdentity} and equivalent to \cref{eq:LSmoothConvexityIdentity} $f\colon \mathbb{R}^n\to\mathbb{R}$ is a deterministic function and $\nu=\gamma=0$.
	Similar bounds on the step size $\alpha$ will appear again.
	In anticipation of these expressions, we adopt the notation
	\begin{equation+}
		\tilde{L} = M \gamma^2 + L (1+\nu^2)
		.
	\end{equation+}
	}
\end{remark}

\begin{remark}
	The reader may notice that the normed quantity on the right-hand side of~\Cref{con:Bias} may be rewritten by definition~\cref{eq:ReducedGradientDefinition} as
	\begin{equation}
	\label{eq:SimpleIdentity}
		\|Q(x_k) - x_k\|^2 = \alpha^2 \|R(x_k)\|^2
		.
	\end{equation}
	This is a useful identity that we will rely on in the sequel.
\end{remark}

\subsection{Alternative condition} %
\label{sub:a_less_than_ideal_algorithm}

Let us focus on the bias condition given by~\cref{eq:ExactBiasControl}.
It may appear odd that its left-hand side involves a norm and its right-hand side involves a norm squared.
However, the bias term on the left-hand side is not absolutely homogeneous with respect to $\nabla F(x)$.
This is easily seen in the specific case where the boundary of the constraint set $C$ is smooth and, therefore, $P\colon \mathbb{R}^n \to C$ is also smooth.
In this setting, we may write out a first-order Taylor expansion for $Q_{S_k}(x) = \proj(x-\alpha\nabla F(x) - \alpha(\nabla F_{S_k}(x) - \nabla F(x)))$ as follows:
\begin{equation}
\label{eq:TaylorExpansion}
	\begin{aligned}	
	Q_{S_k}(x)
	&=
	Q(x)
	-
	\alpha\langle\nabla \!\proj(x-\alpha\nabla F(x)),\nabla F_{S_k}(x) - \nabla F(x)\rangle
	\\
	&\phantom{=} +
	\mcO(\alpha^2\|\nabla F_{S_k}(x) - \nabla F(x)\|^2)
	.
	\end{aligned}
\end{equation}
Therefore, because $\bbE_k[\nabla F_{S_k}(x) - \nabla F(x)] = 0$, by~\cref{eq:UnbiasedGradient}, we arrive at the second-order relationship
\begin{equation}
\label{eq:OrderEstimate}
	\|\bbE_k[Q_{S_k}(\varit) - Q(\varit)]\|
	=
	\mcO(\alpha^2\bbE_k[\|\nabla F_{S_k}(x_k) - \nabla F(x_k)\|^2])
	.
\end{equation}

If we recall~\cref{eq:SimpleIdentity}, it now seems appealing to replace \Cref{con:Bias} by an alternative condition that delivers a probabilistic threshold on $\nabla F_{S_k}(x_k)$ lying within a ball around $\nabla F(x_k)$:
\smallskip
\begin{condition}
\label{con:NEWcondition}
	Control of the error in the full gradient by the norm of the reduced gradient:
	\begin{equation}
	\label{eq:NEWcondition}
		\bbE_k[\|\nabla F_{S_k}(x_k) - \nabla F(x_k)\|^2]
		\leq
		\theta^2 \|R(x_k)\|^2
		,
	\end{equation}
	for some fixed $\theta > 0$.
\end{condition}
\smallskip 

\Cref{con:NEWcondition} is a direct generalization of the so-called ``norm test'' for stochastic gradient descent proposed in \cite{byrd2012sample}.
Although \Cref{con:NEWcondition} also requires unattainable foreknowledge of the exact gradient, it is possible to design a practical algorithm around it.
This aspect is discussed in the next section.
Before then, however, we establish a number of theoretical properties related to the conditions above.

We finish this subsection by showing that, under certain assumptions, \Cref{con:NEWcondition} implies \Cref{con:Norm,con:Bias}.
This observation is encapsulated in~\Cref{thm:Implication}.
The remainder of this section is devoting to analyzing the convergence of the stochastic projected gradient descent algorithm~\cref{eq:GeneralIteration} when either \Cref{con:Norm}, \ref{con:Bias}, or \ref{con:NEWcondition} is enforced.

\begin{theorem}
\label{thm:Implication}
	\Cref{con:NEWcondition} implies \Cref{con:Norm} with $\nu = \sqrt{2\theta + \theta^2}$.
	If, in addition,
	\begin{equation}
		\bbE[\|\nabla f(x;\xi) - \nabla F(x)\|^2] < \infty
	\label{eq:BoundedVarianceAssumption}
	\end{equation}
	for all $x\in C$, $\proj(\cdot):\R^n\to C$ is twice differentiable, and $|S_k|$ is sufficiently large, then \Cref{con:NEWcondition} implies \Cref{con:Bias} for some $\gamma \propto \theta$.
\end{theorem}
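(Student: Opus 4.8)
The argument separates cleanly into the two stated implications. \textbf{For the first implication} (\Cref{con:NEWcondition} $\Rightarrow$ \Cref{con:Norm}), the plan is to bound the deviation of the subsampled reduced gradient from the true one pointwise. Since $R_{S_k}(x) - R(x) = \alpha^{-1}\big(\proj(x-\alpha\nabla F(x)) - \proj(x-\alpha\nabla F_{S_k}(x))\big)$, the non-expansiveness of $\proj$ from~\cref{eq:non-expansive} gives $\|R_{S_k}(x) - R(x)\| \le \|\nabla F_{S_k}(x) - \nabla F(x)\|$. Squaring, taking $\bbE_k[\cdot]$, and invoking \Cref{con:NEWcondition} yields $\bbE_k[\|R_{S_k}(x_k) - R(x_k)\|^2] \le \theta^2\|R(x_k)\|^2$. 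I would then expand $\|R_{S_k}(x_k)\|^2 \le \|R(x_k)\|^2 + 2\|R(x_k)\|\,\|R_{S_k}(x_k) - R(x_k)\| + \|R_{S_k}(x_k)-R(x_k)\|^2$, take $\bbE_k[\cdot]$, and control the cross term with Jensen's inequality, $\bbE_k[\|R_{S_k}(x_k)-R(x_k)\|] \le (\bbE_k[\|R_{S_k}(x_k)-R(x_k)\|^2])^{1/2} \le \theta\|R(x_k)\|$. This gives $\bbE_k[\|R_{S_k}(x_k)\|^2] \le (1+2\theta+\theta^2)\|R(x_k)\|^2$, that is, \Cref{con:Norm} with $\nu^2 = 2\theta+\theta^2$.

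\textbf{For the second implication} (\Cref{con:NEWcondition} $\Rightarrow$ \Cref{con:Bias}), the plan is to turn the heuristic order estimate~\cref{eq:OrderEstimate} into a rigorous bound and then substitute \Cref{con:NEWcondition}. Starting from the Taylor expansion~\cref{eq:TaylorExpansion} of $Q_{S_k}(x)$ about $x-\alpha\nabla F(x)$ in the random perturbation $-\alpha(\nabla F_{S_k}(x)-\nabla F(x))$, twice-differentiability of $\proj$ lets me write the remainder in Lagrange form, so that, for a bound $B$ on $\|\nabla^2\proj\|$ over the relevant region, $\big\|Q_{S_k}(x_k) - Q(x_k) + \alpha\langle\nabla\proj(x_k-\alpha\nabla F(x_k)),\,\nabla F_{S_k}(x_k)-\nabla F(x_k)\rangle\big\| \le \tfrac{B}{2}\,\alpha^2\,\|\nabla F_{S_k}(x_k)-\nabla F(x_k)\|^2$. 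Taking $\bbE_k[\cdot]$, the linear term vanishes by unbiasedness~\cref{eq:UnbiasedGradient}, so the triangle inequality gives $\|\bbE_k[Q_{S_k}(x_k)-Q(x_k)]\| \le \tfrac{B}{2}\alpha^2\,\bbE_k[\|\nabla F_{S_k}(x_k)-\nabla F(x_k)\|^2]$. Applying \Cref{con:NEWcondition} and the identity $\alpha^2\|R(x_k)\|^2 = \|Q(x_k)-x_k\|^2$ then produces $\|\bbE_k[Q_{S_k}(x_k)-Q(x_k)]\| \le \tfrac{B\theta^2}{2}\,\|Q(x_k)-x_k\|^2$, which is \Cref{con:Bias} with $\gamma = \sqrt{B}\,\theta$, hence $\gamma\propto\theta$.

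The first implication is essentially mechanical; \textbf{the main obstacle} is making the Taylor remainder estimate in the second implication fully rigorous, since $\nabla^2\proj$ need not be globally bounded and the perturbation $-\alpha(\nabla F_{S_k}(x_k)-\nabla F(x_k))$ is an unbounded random vector. This is exactly where the two extra hypotheses enter. The finite-variance assumption~\cref{eq:BoundedVarianceAssumption} ensures $\bbE_k[\|\nabla F_{S_k}(x_k)-\nabla F(x_k)\|^2] = |S_k|^{-1}\bbE[\|\nabla f(x_k;\xi)-\nabla F(x_k)\|^2]$ is finite, and taking $|S_k|$ sufficiently large makes this perturbation concentrate, so that with high probability the segment joining $x_k-\alpha\nabla F(x_k)$ and $x_k-\alpha\nabla F_{S_k}(x_k)$ stays inside a fixed neighborhood on which a uniform Hessian bound $B$ for $\proj$ is available; the low-probability tail is absorbed using~\cref{eq:BoundedVarianceAssumption} together with Markov's inequality. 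I would isolate this truncation argument as a separate lemma so that the main computation above stays clean, and then record that the proportionality constant in $\gamma\propto\theta$ is precisely $\sqrt{B}$, with $B$ the effective Hessian bound of the projection.
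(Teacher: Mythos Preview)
Your proposal is correct and follows essentially the same route as the paper. The first implication is argued identically: non-expansiveness of $\proj$ yields $\|R_{S_k}(x_k)-R(x_k)\|\le\|\nabla F_{S_k}(x_k)-\nabla F(x_k)\|$, then an expansion of $\|R_{S_k}(x_k)\|^2$ combined with Jensen gives $(1+\theta)^2\|R(x_k)\|^2$; the paper uses the exact identity $\|R_{S_k}\|^2=\|R\|^2+2\langle R,R_{S_k}-R\rangle+\|R_{S_k}-R\|^2$ rather than your pointwise triangle inequality, but the bounds coincide. For the second implication the paper is actually \emph{less} careful than you are: it simply cites the order estimate~\cref{eq:OrderEstimate}, uses~\cref{eq:BoundedVarianceAssumption} to note $\bbE_k[\|\nabla F_{S_k}(x_k)-\nabla F(x_k)\|^2]=|S_k|^{-1}\bbE[\|\nabla f(x_k;\xi)-\nabla F(x_k)\|^2]\to 0$, and asserts that for $|S_k|$ large enough the $\mcO(\cdot)$ becomes a clean inequality with some constant $c$, giving $\gamma=\theta\sqrt{2c}$. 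Your Lagrange-remainder and truncation lemma would supply the rigor the paper leaves implicit, so your plan is a strict refinement of the published argument rather than a departure from it.
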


\begin{proof}
	To prove the first statement, it is important that we recall that $\proj(\cdot)$ is non-expansive~\cref{eq:non-expansive}.
	Due to this property, we have
	\begin{equation}
	\label{eq:ProjectionTrick}
	\begin{aligned}
		\|R_{S_k}(x_k) - R(x_k)\|
		&=
		\frac{1}{\alpha}
		\|\proj(x_k - \alpha\nabla F_{S_k}(x_k)) - \proj(x_k - \alpha\nabla F(x_k)))\|
		\\
		&\leq
		\|\nabla F_{S_k}(x_k) - \nabla F(x_k)\|
		.
	\end{aligned}
	\end{equation}
	Therefore, by~\cref{eq:NEWcondition},
	\begin{equation}
	\label{eq:LinearBoundBias}
		\|\bbE_k[R_{S_k}(x_k) - R(x_k)]\|
		\leq
		\big(\bbE_k[\|R_{S_k}(x_k) - R(x_k)\|^2]\big)^{1/2}
		\leq
		\theta\|R(x_k)\|
		.
	\end{equation}
	Likewise,
	\begin{align}
		\bbE_k[\|R_{S_k}(x_k)\|^2]
		&=
		\|R(x_k)\|^2
		+
		2\langle R(x_k), \bbE_k[R_{S_k}(x_k) - R(x_k)]\rangle
		+
		\bbE_k[\|R_{S_k}(x_k) - R(x_k)\|^2]
		\\
		\label{eq:ImplicationStep3}
		&\leq
		\|R(x_k)\|^2
		+
		2\|R(x_k)\| \|\bbE_k[R_{S_k}(x_k) - R(x_k)]\|
		+
		\bbE_k[\|R_{S_k}(x_k) - R(x_k)\|^2]
		\\
		&\leq
		\|R(x_k)\|^2
		+
		2\theta\|R(x_k)\|^2
		+
		\theta^2\|R(x_k)\|^2
		\\
		&\leq
		(1+\theta)^2\|R(x_k)\|^2
		.
	\end{align}
	In other words, \Cref{con:Norm} holds with $\nu = \sqrt{2\theta + \theta^2}$.

	To prove the second statement, we must argue that $\bbE_k[\|\nabla F_{S_k}(x_k) - \nabla F(x_k)\|^2] \to 0$ as $|S_k| \to \infty$.
	Indeed, notice that
	\begin{equation}
	\label{eq:NormLimit}
		\bbE_k[\|\nabla F_{S_k}(x_k) - \nabla F(x_k)\|^2]
		=
		\frac{\bbE_k[\|\nabla f(x_k;\xi) - \nabla F(x_k)\|^2]}{|S_k|}
		\to 0
		,
	\end{equation}
	since the numerator is independent of $|S_k|$ by~\cref{eq:BoundedVarianceAssumption}.
	Now, immediately following from~\cref{eq:OrderEstimate}, there exists some \revised{$\alpha$- and $k$-independent} constant, say $c\geq 0$, such that
	\begin{align}
		\|\bbE_k[Q_{S_k}(\varit) - Q(\varit)]\|
		&\leq
		c\alpha^2\bbE_k[\|\nabla F_{S_k}(x_k) - \nabla F(x_k)\|^2]
		,
	\end{align}
	for all sufficiently large $|S_k|$.
	Invoking \Cref{con:NEWcondition}, we have
	\begin{equation}
		\|\bbE_k[Q_{S_k}(\varit) - Q(\varit)]\|
		\leq
		c\theta^2\alpha^2\|R(x_k)\|^2
		=
		c\theta^2\|Q(x_k)-x_k\|^2
		,
	\end{equation}
	and thus \Cref{con:Bias} holds with $\gamma = \theta\sqrt{2c}$.
	This completes the proof.
\end{proof}

\begin{remark}
\label{rem:Affine}
	One may notice that if $C$ is \revisedBW{an affine subspace}, then the second-order term in~\cref{eq:TaylorExpansion} actually disappears and~\Cref{con:Bias} is satisfied trivially.
	We will argue in \Cref{sub:non_convex_problems} that this special setting permits us to propose other alternative conditions that are weaker than \Cref{con:NEWcondition}.
\end{remark}

\begin{remark}[Comparison to \cite{xie2020constrained}]
\label{rem:ComparisontoXie1}
	An alternative to \Cref{con:NEWcondition} which leads to similar convergence results is proposed in \cite[Equation~1.4]{xie2020constrained}.
	In our notation, this condition would be written
	\begin{equation}
	\label{eq:XieCondition}
		\bbE_k[\|\nabla F_{S_k}(x_k) - \nabla F(x_k)\|^2]
		\leq
		\theta^2 \|\bbE_k[R_{S_k}(x_k)]\|^2
		.
	\end{equation}
	It may be argued that the upper bound in~\cref{eq:XieCondition} is more expensive to estimate than $\|R(x_k)\|^2$ because a Monte Carlo estimate of $\bbE_k[R_{S_k}(x_k)]$ would require repeated application of the projection operator $P\colon \mathbb{R}^n\to C$.
	Meanwhile, estimating $\|R(x_k)\|^2$ requires only a careful estimate of $\nabla F(x_k)$ and a single application of $P$.
	\Cref{rem:ComparisontoXie2,rem:ComparisontoXie3} further compare our conditions to those in \cite{xie2020constrained}.
\end{remark}

\subsection{Convergence} %
\label{sub:convergence_rates}
Convergence of SPGD can be shown under a variety of assumptions involving \Cref{con:Norm,con:Bias,con:NEWcondition}.
We begin this subsection by showing that \Cref{con:NEWcondition} implies $q$-linear convergence when $F$ is strongly convex.

\begin{theorem}[Strongly convex objective]
\label{thm:StronglyConvex}
	Let $\Obj$ be both $L$-smooth~\cref{eq:LSmooth} and $\mu$-strongly convex~\cref{eq:MuStronglyConvex} and let $C$ be both convex and closed.
	Moreover, let the infinite sequence $\{\varit\}_{k=0}^\infty$ be generated by~\cref{eq:GeneralIteration}, with
	\begin{equation}
	\label{eq:StepSizeConditionStronglyConvex}
	 	\alpha
	 	<
	 	\frac{1}{L}
	\end{equation}
	and each $S_k$ satisfying \Cref{con:NEWcondition}.
	Then, for all sufficiently small $\theta>0$, $\varit$ converges $q$-linearly in expectation; i.e.,
	\begin{equation}
		\bbE[\lVert \varitnext - \varopt \rVert] \leq \rho^k \lVert x_0 - \varopt \rVert
		,
	\end{equation}
	for some $\rho \in [0,1)$, where $x^\ast = \argmin_{x\in C} F(x)$.
\end{theorem}

\begin{proof}
	By~\cref{eq:TotalExpectationIdentity}, it is sufficient to show that $\bbE_k[\lVert \varitnext - \varopt \rVert] \leq \rho \lVert \varit - \varopt \rVert$, for every $k$.
	To this end, denote $E_{S_k}(x_k) = Q_{S_k}(x_k) - Q(x_k)$ and observe that
	\begin{alignat}{3}
		\big(\bbE_k[\lVert \varitnext - \varopt \rVert]\big)^2
		& \leq
		\bbE_k[\lVert \varitnext - \varopt \rVert^2]
		= \bbE_k[\lVert \varit - \alpha\, \gradCsub(\varit) - \varopt\rVert^2] \\
		& = \lVert \varit - \varopt \rVert^2 + \alpha^2\,\bbE_k[\lVert \gradCsub(\varit) \rVert^2] - 2\alpha\,\bbE_k[\langle \gradCsub(\varit), \varit -\varopt\rangle] \\
		&=
		\lVert \varit - \varopt \rVert^2
		+
		\alpha^2\,\bbE_k[\lVert \gradCsub(\varit) \rVert^2]
		-
		2\alpha\,\langle\gradC(\varit), \varit -\varopt\rangle
		\\
		&\phantom{=} +
		2\,\langle \bbE_k[E_{S_k}(x_k)], \varit -\varopt\rangle
		.
	\end{alignat}
	Now, by \Cref{thm:Implication}, we have
	\begin{equation}
		\bbE_k[\lVert \gradCsub(\varit) \rVert^2]
		\leq
		(1+\nu^2)\,\lVert \gradC(\varit) \rVert^2
		,
	\end{equation}
	with $\nu^2 = 2\theta + \theta^2$.
	Furthermore, by~\cref{eq:StongConvexityIdentity}, we have
	\begin{equation}
		-2\alpha\langle\gradC(\varit), \varit -\varopt\rangle
		\leq
		-\mu\alpha \lVert \varit - \varopt \rVert^2
		-\alpha^2 \lVert \gradC(\varit) \rVert^2
		,
	\end{equation}
	and, by~\cref{eq:LinearBoundBias}, we have
	\begin{align}
		2\,\langle \bbE_k[E_{S_k}(x_k)], \varit -\varopt\rangle
		&\leq
		2\| \bbE_k[ E_{S_k}(x_k) ] \|\|\varit -\varopt\|
		\leq
		2\alpha\theta\| \gradC(\varit) \|\|\varit -\varopt\|
		.
	\end{align}
	Combining each of these bounds, we find that
	\begin{equation}
	\label{eq:StrongConvexityStep1}
		\bbE_k[\lVert \varitnext - \varopt \rVert]^2
		\leq
		\left( 1 - \mu\alpha\right)\lVert \varit - \varopt \rVert^2
		+
		2\alpha\theta\| \gradC(\varit) \|\|\varit -\varopt\|
		+
		\alpha^2(2\theta + \theta^2)\,\lVert \gradC(\varit) \rVert^2
		.
	\end{equation}
	
	Invoking~\cref{eq:StongConvexityIdentity} a second time, along with the Cauchy--Schwarz inequality, yields
	\begin{equation}
		\frac{\mu}{2} \lVert \varit - \varopt \rVert^2 + \frac{\alpha}{2} \lVert \gradC(\varit) \rVert^2
		\leq
		\|\gradC(\varit)\| \|\varit - \varopt\|
		.
	\end{equation}
	Note that $\mu\leq L$ and so $\alpha\mu < \mu/L \leq 1$.
	Moreover, the two roots of the equation $\mu a^2 + \alpha b^2 = 2ab$ are $b = (1 \pm \sqrt{1-\alpha\mu})a/\alpha$.
	Thus, it follows that
	\begin{equation}
	\label{eq:StrongConvexityStep2}
		(1 - \sqrt{1-\alpha\mu})\|x_k-x^\ast\|
		\leq
		\alpha\|\gradC(\var_k)\|
		\leq
		(1 + \sqrt{1-\alpha\mu})\|x_k-x^\ast\|
		.
	\end{equation}
	We may now replace every $\alpha\|\gradC(\var_k)\|$ factor in~\cref{eq:StrongConvexityStep1} by the upper bound given in~\cref{eq:StrongConvexityStep2}.
	A straightforward simplification of the resulting inequality yields
	\begin{equation}
		\bbE_k[\lVert \varitnext - \varopt \rVert]^2
		\leq
		\left( 1 + 2\big(1 + \sqrt{1-\alpha\mu}\big)(3\theta + \theta^2)  - (1 + \theta)^2 \mu\alpha \right)\lVert \varit - \varopt \rVert^2
		.
	\end{equation}
	Finally, note that if $\theta$ is chosen sufficiently small, then
	\begin{align}
		\rho^2
		&:=
		1 + 2\big(1 + \sqrt{1-\alpha\mu}\big)(3\theta + \theta^2)  - (1 + \theta)^2 \mu\alpha
		\leq
		1 + 4(3\theta + \theta^2)  - \mu\alpha
		<
		1
		,
	\end{align}
	as necessary.
\end{proof}

\Cref{con:Norm,con:Bias} can also be shown to imply convergence.
In the following theorem, we show that it is possible to arrive at a sublinear convergence rate with a general convex objective function $F$.

\begin{theorem}[General convex objective]
	Assume that $F$ is $L$-smooth~\cref{eq:LSmooth} and $C$ is convex and closed and that the sequence of iterates $\{\varit\}_{k=0}^\infty$ is contained in an bounded open set $D$ over which $\|\nabla F(x)\|$ is bounded above by some constant $M>0$.
	Moreover, assume that
	\begin{equation}
	\label{eq:GeneralConvexBound}
	 	\revised{
	 	\tilde{L}\alpha + \nu^2 + \gamma^2\mathrm{diam}(D) \leq 1
	 	,
	 	}
	\end{equation}
	where each $S_k$ satisfies \Cref{con:Norm,con:Bias}.
	Then, for every any positive integer $T$,
	\begin{equation}
	\E [\Obj(\var_{T})] - \Obj^*
	\leq
	\frac{1}{2\alpha T} \lVert x_0 - \varopt \rVert^2
	,
	\end{equation}
	where $\Obj^*$ is the optimal objective function value and $x^\ast \in \{x : x = \argmin_{x\in C} F(x) \}$.
\label{thm:GeneralConvex}
\end{theorem}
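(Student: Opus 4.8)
The plan is to mimic the classical $\mcO(1/T)$ convergence proof of projected gradient descent for convex objectives, with the inexactness absorbed through \Cref{con:Norm,con:Bias}. Concretely, I aim to establish a one‑step estimate of the form $2\alpha\big(\bbE_k[F(x_{k+1})]-F^*\big) \le \lVert x_k-\varopt\rVert^2 - \bbE_k[\lVert x_{k+1}-\varopt\rVert^2]$, then pass to total expectations using \cref{eq:TotalExpectationIdentity}, telescope over $k=0,\dots,T-1$ to obtain $2\alpha\sum_{k=1}^{T}\big(\bbE[F(x_k)]-F^*\big) \le \lVert x_0-\varopt\rVert^2$, and finally invoke \Cref{thm:Descent} — whose hypotheses are implied here, since \cref{eq:GeneralConvexBound} forces $\alpha < 1/\tilde L \le 2/\tilde L$ — to conclude that $\bbE[F(x_k)]$ is non‑increasing, whence $\bbE[F(x_T)]-F^* \le \tfrac1T\sum_{k=1}^T(\bbE[F(x_k)]-F^*) \le \tfrac{1}{2\alpha T}\lVert x_0-\varopt\rVert^2$.

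The heart of the argument is the one‑step estimate. First I would record a three‑point ``progress inequality'' for the exact projected‑gradient map: combining \Cref{lem:GradientBounds} (with $y=\varopt$, $z=x_k$), the convexity bound $\langle\nabla F(x_k),\varopt-x_k\rangle \le F^*-F(x_k)$, and the $L$‑smooth upper bound on $F(Q(x_k))$ expanded at $x_k$ (legitimate because $\alpha < 1/\tilde L \le 1/L$), one gets $\langle \gradC(x_k), x_k-\varopt\rangle \ge F(Q(x_k))-F^* + \tfrac{\alpha}{2}(2-L\alpha)\lVert \gradC(x_k)\rVert^2$. Next I would expand $\lVert x_{k+1}-\varopt\rVert^2 = \lVert x_k-\alpha\gradCsub(x_k)-\varopt\rVert^2$, take $\bbE_k[\cdot]$, and use $\bbE_k[\gradCsub(x_k)] = \gradC(x_k) - \alpha^{-1}\bbE_k[E_{S_k}(x_k)]$ with $E_{S_k}(x_k)=Q_{S_k}(x_k)-Q(x_k)$. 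The cross term then reduces to $-2\alpha\langle\gradC(x_k),x_k-\varopt\rangle$ (controlled by the progress inequality) plus a bias term $2\langle\bbE_k[E_{S_k}(x_k)],x_k-\varopt\rangle$, which I bound by $\gamma^2\mathrm{diam}(D)\lVert Q(x_k)-x_k\rVert^2$ using \cref{eq:ExactBiasControl} and $\lVert x_k-\varopt\rVert \le \mathrm{diam}(D)$; the quadratic term $\alpha^2\bbE_k[\lVert\gradCsub(x_k)\rVert^2]$ I bound by $\alpha^2(1+\nu^2)\lVert\gradC(x_k)\rVert^2$ via \cref{eq:ExactReducedGradientControl}. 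Every leftover is then a multiple of $\lVert Q(x_k)-x_k\rVert^2 = \alpha^2\lVert\gradC(x_k)\rVert^2$ whose coefficient is governed by $L\alpha+\nu^2+\gamma^2\mathrm{diam}(D)-1 < 0$ (using $\tilde L \ge L$ in \cref{eq:GeneralConvexBound}); a last application of $L$‑smoothness on $D$, together with $\tilde L = M\gamma^2+L(1+\nu^2)$, lets me replace $F(Q(x_k))$ by $\bbE_k[F(x_{k+1})]$, the extra error of the same quadratic type again absorbed by \cref{eq:GeneralConvexBound}. This yields the desired one‑step inequality, after which the telescoping and monotonicity steps outlined above close the proof.

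I expect the main obstacle to be precisely this last bookkeeping: one must split the bias term $\bbE_k[E_{S_k}(x_k)]$ and the gap $F(Q(x_k))-\bbE_k[F(x_{k+1})]$ in exactly the right way so that all residual curvature terms are nonnegative multiples of $\lVert Q(x_k)-x_k\rVert^2$ whose total coefficient is dominated by \cref{eq:GeneralConvexBound}; the precise combination $\tilde L\alpha+\nu^2+\gamma^2\mathrm{diam}(D)<1$ is what the interplay of the step size, the noise level $\nu$, the bias level $\gamma$, the gradient bound $M$ on $D$, and $\mathrm{diam}(D)$ forces, and bridging the ``clean'' step $Q(x_k)$ with the realized iterate $x_{k+1}=Q_{S_k}(x_k)$ is where \Cref{con:Bias} and the descent lemma do the work. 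A minor but necessary point is that $Q(x_k)$ and $\varopt$ must lie in a region on which $\lVert\nabla F\rVert$ is bounded; this is where the boundedness of $D$ enters, and one may, if needed, slightly enlarge $D$ (hence $M$) for this.
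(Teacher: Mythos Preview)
Your plan is essentially the paper's: expand $\lVert x_{k+1}-\varopt\rVert^2$, control the quadratic piece with \Cref{con:Norm} and the bias piece with \Cref{con:Bias}, combine \Cref{lem:GradientBounds} (at $y=\varopt$) with convexity and $L$-smoothness to turn $\langle R(x_k),x_k-\varopt\rangle$ into $F$-values, then telescope and invoke \Cref{thm:Descent} for monotonicity. The only substantive difference is the detour through $F(Q(x_k))$ that you then want to correct to $\bbE_k[F(x_{k+1})]$.

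That detour is where your own flagged ``bookkeeping'' worry is real. If you expand $F(x_{k+1})$ at $Q(x_k)$, the second-order residual is $\tfrac{L}{2}\,\bbE_k\big[\lVert E_{S_k}(x_k)\rVert^2\big]=\tfrac{L\alpha^2}{2}\,\bbE_k\big[\lVert R_{S_k}(x_k)-R(x_k)\rVert^2\big]$, and this quantity is \emph{not} bounded by $\nu^2\lVert R(x_k)\rVert^2$ under \Cref{con:Norm,con:Bias} alone, because $R_{S_k}(x_k)$ is generally a biased estimator of $R(x_k)$ (the projection is nonlinear). Crude bounds such as $\lVert E_{S_k}\rVert^2\le 2\lVert x_{k+1}-x_k\rVert^2+2\lVert Q(x_k)-x_k\rVert^2$ work but inflate the coefficient beyond $\tilde L\alpha$, so the precise hypothesis \cref{eq:GeneralConvexBound} no longer suffices.

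The paper avoids this by never introducing $F(Q(x_k))$. Instead it writes
\[
\langle\nabla F(x_k),x_k-Q(x_k)\rangle
=\langle\nabla F(x_k),x_k-x_{k+1}\rangle+\langle\nabla F(x_k),x_{k+1}-Q(x_k)\rangle,
\]
applies $L$-smoothness to $F(x_{k+1})$ expanded at $x_k$ (so the second-order term is $\tfrac{L}{2}\lVert x_{k+1}-x_k\rVert^2$, directly controlled by \Cref{con:Norm}), and bounds the remaining $\langle\nabla F(x_k),\bbE_k[E_{S_k}(x_k)]\rangle$ by $M\cdot\tfrac{\gamma^2}{2}\lVert Q(x_k)-x_k\rVert^2$ via \Cref{con:Bias}. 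This yields directly
\[
\langle\nabla F(x_k),x_k-Q(x_k)\rangle
\le F(x_k)-\bbE_k[F(x_{k+1})]+\tfrac{\tilde L\alpha^2}{2}\lVert R(x_k)\rVert^2,
\]
which, fed into your progress identity in place of the $F(Q(x_k))$ bound, produces exactly the coefficient $\tilde L\alpha+\nu^2+\gamma^2\operatorname{diam}(D)-1$. With that single adjustment your outline goes through verbatim.
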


\begin{proof}
	Notice that
	\begin{equation}
		\|x_{k+1} - x_k\|^2 + \|x_k - x^\ast\|^2 - \|x_{k+1} - x^\ast\|^2
		=
		2\langle x_k - x_{k+1}, x_k - x^\ast\rangle.
	\end{equation}
	Using the identity $x_k - x_{k+1} = \alpha R_{S_k}(x_k)$ and rearranging terms, we arrive at
	\begin{equation}
		\|x_{k+1} - x^\ast\|^2 - \|x_{k} - x^\ast\|^2
		=
		2 \alpha \langle R(x_k), x^\ast - x_k\rangle
		-
		2 \langle E_{S_k}(x_k), x^\ast - x_k\rangle
		+
		\alpha^2 \|R_{S_k}(x_k)\|^2
		,
	\end{equation}
	where $E_{S_k}(x_k) = x_{k+1} - Q(x_k)$.
	Taking the expected value of both sides, we find
	\begin{align}
		\bbE_k[\|x_{k+1} - x^\ast\|^2] &- \|x_{k} - x^\ast\|^2
		\\
		&\leq
		2 \alpha \langle R(x_k), x^\ast - x_k\rangle
		+
		2\|\bbE_k[E_{S_k}(x_k)]\|\|x^\ast-x_k\|
		+
		\alpha^2 \bbE_k[\|R_{S_k}(x_k)\|^2]
		\\
		&\leq
		2 \alpha \langle R(x_k), x^\ast - x_k\rangle
		+
		\alpha^2(1+\nu^2+\gamma^2\|x^\ast-x_k\|)\|R(x_k)\|^2
		.
	\label{eq:WeakConvexityProof_eq1}
	\end{align}
	By setting $y=x^\ast$ and $z=x_k$ in \Cref{lem:GradientBounds}, we may write
	\begin{equation}
		\langle R(x_k), x^\ast - x_{k}\rangle
		+
		\langle R(x_k), x_k - Q(x_{k})\rangle
		\leq
		\langle \nabla F(x_k), x^\ast - x_{k}\rangle
		+
		\langle \nabla F(x_k), x_k - Q(x_{k})\rangle
		.
	\end{equation}
	Note that $\langle R(x_k), x_k - Q(x_{k})\rangle = \alpha\|R(x_k)\|^2$, by definition, and $\langle \nabla F(x_k), x^\ast - x_{k}\rangle  \leq  F^\ast - F(x_k)$, by convexity.
	By standard arguments following from the $L$-smoothness of $F$ \cite{bertsekas2015convex}, we have that
	\begin{align}
		\langle \nabla F(x_k), x_k &- Q(x_k) \rangle
		=
		\langle \nabla F(x_k), x_k - x_{k+1} \rangle
		+
		\langle \nabla F(x_k), x_{k+1} - Q(x_k) \rangle
		\\
		&\leq
		F(x_k)
		-
		F(x_{k+1})		
		+
		\frac{L}{2}\|x_{k+1} - x_k\|^2
		+
		\langle \nabla F(x_k), x_{k+1} - Q(x_k) \rangle
		.
	\end{align}
	Taking the conditional expectation of both sides yields
	\begin{align}
		\langle \nabla F(x_k), x_k &- Q(x_k) \rangle
		\\
		&\leq
		F(x_k)
		-
		\bbE_k[ F(x_{k+1}) ]		
		+
		\frac{L}{2} \bbE_k[\|x_{k+1} - x_k\|^2]
		+
		\langle \nabla F(x_k), \bbE_k[x_{k+1} - Q(x_k)] \rangle
		\\
		&\leq
		F(x_k)
		-
		\bbE_k[ F(x_{k+1}) ]
		+
		\frac{\alpha^2}{2} \big( L (1+\nu^2) + M \gamma^2 \big) \| R(x_k) \|^2
		.
	\end{align}
	Therefore,
	\begin{align}
		\langle R(x_k), x^\ast - x_{k}\rangle
		&\leq
		\langle \nabla F(x_k), x^\ast - x_{k}\rangle
		+
		\langle \nabla F(x_k), x_k - Q(x_{k})\rangle
		-
		\langle R(x_k), x_k - Q(x_{k})\rangle
		\\
		&\leq
		(F^\ast - F(x_k))
		+
		\Big(F(x_k)
		-
		\bbE_k[ F(x_{k+1}) ]
		+
		\frac{\tilde{L}}{2}\alpha^2\|R(x_k)\|^2
		\Big)
		-
		\alpha
		\|R(x_k)\|^2
		\\
		&=
		F^\ast - \bbE_k[ F(x_{k+1}) ]
		+
		\alpha\Big(\frac{\tilde{L}\alpha}{2}-1\Big)\|R(x_k)\|^2
		.
	\label{eq:WeakConvexityProof_eq2}
	\end{align}
	Finally, collecting together~\cref{eq:WeakConvexityProof_eq1,eq:WeakConvexityProof_eq2}, we find
	\begin{align}
		\bbE_k[\|x_{k+1} - x^\ast\|^2] &- \|x_{k} - x^\ast\|^2
		\\
		&\leq
		2\alpha(F^\ast - \bbE_k[ F(x_{k+1}) ] )
		+
		\alpha^2 ( \tilde{L} \alpha + \nu^2 + \gamma^2\|x^\ast-x_k\| -1) \|R(x_k)\|^2
		\\
		&\leq
		2\alpha(F^\ast - \bbE_k[ F(x_{k+1}) ] )
		,
	\end{align}
	where the second inequality follows from the bounds on $\alpha$, $\nu$, and $\gamma$ made in the theorem statement.
	We can now write
	\begin{equation}
		\bbE\big[F(x_{k+1})\big] - F^\ast
		\leq
		\frac{1}{2\alpha} \Big(
			\bbE[\|x_{k} - x^\ast\|^2] - \bbE[\|x_{k+1} - x^\ast\|^2]
		\Big)
		,
	\end{equation}
	which, after invoking~\Cref{thm:Descent}, delivers the bound
	\begin{align}
		\bbE\big[F(x_T)\big] - F^\ast
		&\leq
		\sum_{k=0}^{T-1}
		\frac{1}{T}
		(\bbE\big[F(x_{k+1})\big] - F^\ast)
		\\
		&\leq
		\frac{1}{2\alpha T}
		\Big(
		\bbE[\|x_{0} - x^\ast\|^2] - \bbE[\|x_{T} - x^\ast\|^2]
		\Big)
		\\
		&\leq
		\frac{1}{2\alpha T}
		\bbE[\|x_{0} - x^\ast\|^2]
		.
	\end{align}
	~

\end{proof}

\revised{
\begin{remark}
	The two previous theorems show $q$-linear and sublinear convergence rates, respectively.
	An important difference in \revisedBW{the} assumptions is that the sequence of iterates is assumed to be bounded in \Cref{thm:GeneralConvex}.
	Although this assumption trivially holds for any optimization problem posed over a bounded set, this does appear to be a strong assumption to make.
	At the same time, if one analyzes the proof in detail, it can be seen \revisedBW{that $\tilde{L}\alpha + \nu^2 + \gamma^2\|x^\ast-x_k\| \leq 1$ for each $x_k$ is sufficient}.
	This fact suggests the possible benefit of evolving $\gamma>0$ with $x_k$.
	It also demonstrates the deteriorating role of the bias (cf.~\cref{eq:ExactBiasControl}) on the expected accuracy as $x_k$ approaches $x^\ast$.
\end{remark}
}

The following theorem shows an even weaker version of convergence which requires only the same mild assumptions as were made in \Cref{thm:Descent}.
In particular, it shows that the sequence of reduced gradients $\{R(x_k)\}$ converges to
zero in expectation.
Therefore, every limit point $x^\ast$ of the sequence $x_k$ is stationary; i.e., $Q(x^\ast) = x^\ast$.
This theorem also establishes a global sublinear rate of convergence of the smallest reduced gradients.

\begin{theorem}[Non-convex objective]
\label{thm:non-convex_objective}
	Under the assumptions of~\Cref{thm:Descent}, if $\alpha < 2/\tilde{L}$, then it holds that
	\begin{equation}
		\lim_{k\to\infty}
		\bbE[\|R(x_k)\|^2]
		=
		\lim_{k\to\infty}
		\bbE[\|Q(x_k) - x_k\|^2]
		=
		0
		.
	\end{equation}
	Moreover, for any positive integer $T$,
	\begin{equation}
		\min_{0\leq k \leq T-1} \bbE[\|R(x_k)\|^2]
		\leq
		\frac{1}{c\alpha^2T}\big(F(x_{0}) - F_{\min}\big)
		,
	\end{equation}
	where $c = \frac{1}{\alpha} - \frac{\tilde{L}}{2} > 0$ and $F_{\min}$ is a finite lower bound on $F$ in $C$.
\end{theorem}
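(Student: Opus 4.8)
The plan is to leverage the per-iteration descent estimate already derived \emph{inside} the proof of \Cref{thm:Descent}, rather than merely its conclusion. Concretely, inequality~\cref{eq:DescentTheorem_eq5} states that
\[
	\bbE_k[F(x_{k+1})] - F(x_k)
	\leq
	- c\,\|Q(x_k) - x_k\|^2
	=
	- c\,\alpha^2 \|R(x_k)\|^2
	,
\]
where the second equality uses $\|Q(x_k)-x_k\| = \alpha\|R(x_k)\|$ and $c = \frac{1}{\alpha} - \frac{\tilde{L}}{2}$, which is strictly positive by the hypothesis of the theorem. Taking total expectations and applying the tower property~\cref{eq:TotalExpectationIdentity} gives
\[
	\bbE[F(x_{k+1})]
	\leq
	\bbE[F(x_k)] - c\,\alpha^2\, \bbE[\|R(x_k)\|^2]
	.
\]
An easy induction, together with the fact that $F \geq F_{\min}$ on $C$ and that the iterates remain in $C$, shows that all the quantities $\bbE[F(x_k)]$ are finite, so this recursion is meaningful.

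Next I would telescope this recursion from $k=0$ to $k=T-1$, which yields
\[
	\bbE[F(x_T)]
	\leq
	F(x_0) - c\,\alpha^2 \sum_{k=0}^{T-1} \bbE[\|R(x_k)\|^2]
	.
\]
Since the left-hand side is at least $F_{\min}$, rearranging produces $c\,\alpha^2 \sum_{k=0}^{T-1} \bbE[\|R(x_k)\|^2] \leq F(x_0) - F_{\min}$. Bounding the sum below by $T \min_{0\le k\le T-1}\bbE[\|R(x_k)\|^2]$ and dividing by $c\alpha^2 T$ gives exactly the claimed finite-horizon estimate.

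For the limiting statements, I would send $T\to\infty$ in the telescoped inequality: the series $\sum_{k=0}^\infty \bbE[\|R(x_k)\|^2]$ then has nonnegative terms and partial sums bounded by $(F(x_0)-F_{\min})/(c\alpha^2)$, so it converges and its general term must tend to zero, i.e.\ $\bbE[\|R(x_k)\|^2] \to 0$. The companion limit $\bbE[\|Q(x_k)-x_k\|^2] \to 0$ follows at once from $\|Q(x_k)-x_k\|^2 = \alpha^2\|R(x_k)\|^2$; the informal stationarity remark preceding the theorem then follows from continuity of $Q$.

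There is no genuine obstacle here — this is the standard telescoping/summability device for nonconvex first-order methods. The only points requiring mild care are (i) extracting the sharper bound~\cref{eq:DescentTheorem_eq5} rather than the coarser statement of \Cref{thm:Descent}; (ii) converting conditional expectations to total expectations via~\cref{eq:TotalExpectationIdentity} \emph{before} summing; and (iii) noting that $c>0$ amounts to the strict step-size restriction $\alpha < 2/\tilde{L}$, which is precisely what the hypothesis $c>0$ in the statement encodes, the non-strict bound of \Cref{thm:Descent} being insufficient.
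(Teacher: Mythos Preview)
Your proposal is correct and follows essentially the same argument as the paper: both start from the per-iteration bound~\cref{eq:DescentTheorem_eq5}, pass to total expectations, telescope to obtain a summable series bounded by $(F(x_0)-F_{\min})/(c\alpha^2)$, and deduce both the vanishing of the general term and the $\min$-bound via the average. Your additional remarks on finiteness of $\bbE[F(x_k)]$ and on the strict inequality $\alpha < 2/\tilde{L}$ needed for $c>0$ are accurate refinements, but the core proof is the same.
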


\begin{proof}
	Begin by taking the total expected value of both sides of~\cref{eq:DescentTheorem_eq5} and rewriting the result as
	\begin{equation}
		\bbE[\|R(x_k)\|^2]
		=
		\frac{1}{\alpha^2}\bbE[\|Q(x_k)-x_k\|^2]
		\leq
		\frac{1}{c\alpha^2}\big(\bbE[F(x_{k})] - \bbE[F(x_{k+1})]\big)
		.
	\label{eq:NonConvexProofStep1}
	\end{equation}
	It follows from the step size assumption in~\Cref{thm:Descent} that $c > 0$.
	Therefore, summing both sides of~\cref{eq:NonConvexProofStep1} delivers
	\begin{equation}
		\sum_{k=0}^{T-1}
		\bbE[\|R(x_k)\|^2]
		\leq
		\frac{1}{c\alpha^2}\big(\bbE[F(x_{0})] - \bbE[F(x_{T})]\big)
		\leq
		\frac{1}{c\alpha^2}\big(F(x_{0}) - F_{\min}\big)
		.
	\end{equation}
	Since this sum of $T$ positive terms is bounded from above by a constant independent of $T$, the first statement follows.
	Moreover, notice that
	\begin{equation}
		\min_{0\leq k \leq T-1} \bbE[\|R(x_k)\|^2]
		\leq
		\frac{1}{T}
		\sum_{k=0}^{T-1}
		\bbE[\|R(x_k)\|^2]
		\leq
		\frac{1}{c\alpha^2T}\big(F(x_{0}) - F_{\min}\big)
		.
	\end{equation}
	This completes the proof.
\end{proof}

\begin{remark}[Comparison to \cite{xie2020constrained}]
\label{rem:ComparisontoXie2}
	In \cite[Theorem 3.3]{xie2020constrained}, it is shown that~\cref{eq:XieCondition} also leads to q-linearly convergence in expectation when $F$ is strongly convex and sublinear convergence when $F$ is convex, but not strongly convex.
	No theorem similar to \Cref{thm:non-convex_objective}, for convergence in the case of non-convex $F$, appears in \cite{xie2020constrained}.
\end{remark}

\section{A practical algorithm} %
\label{sub:a_practical_algorithm}

In this section, we develop a practical SPGD algorithm based on \Cref{con:NEWcondition}.
In order to test whether this condition is satisfied, we introduce an approximation of the true gradient $\nabla F(x_k)$ and the risk measure $\bbE[\cdot]$.
We begin by recalling~\cref{eq:NormLimit}, which allows us to we rewrite~\cref{eq:NEWcondition} as
\begin{equation}
\label{eq:Condition3Rewrite}
	\frac{\bbE_k[\|\nabla f(x_k;\xi) - \nabla F(x_k)\|^2]}{|S_k|}
	\leq
	\theta^2 \|R(x_k)\|^2
	.
\end{equation}
We then approximate the true gradient $\nabla F(x_k)$ by the sample average gradient $\nabla F_{S_k}(x_k)$, as done in similar work on adaptive sampling; cf. \cite{Bollapragada2019}.
Likewise, we approximate the conditional expected value $\bbE_k[\cdot]$ by a sample average.
Altogether, we propose the following practical test to check \Cref{con:NEWcondition}:
\smallskip
\begin{test}[Approximation of \Cref{con:NEWcondition}]
\label{con:NewCondition_approx}
	Approximate control of the error in the full gradient by the norm of the reduced gradient:
	\begin{equation}
	\label{eq:NEWcondition_inexact}
		\frac{1}{|S_k|-1}\frac{\sum_{\xi\in S_k}\|\nabla f(x_k;\xi) - \nabla F_{S_k}(x_k)\|^2}{|S_k|}
		\leq
		\theta^2 \|R_{S_k}(x_k)\|^2
		,
	\end{equation}
	for some fixed $\theta > 0$.
\end{test}
\smallskip
In~\cref{eq:NEWcondition_inexact}, we have used the factor $\frac{1}{|S_k|-1}$ instead of $\frac{1}{|S_k|}$ so that the left-hand side becomes an unbiased estimator for $\bbE_k[\|\nabla F_{S_k}(x_k) - \nabla F(x_k)\|^2]$.

In order to construct a set $S_k$ satisfying~\cref{eq:NEWcondition_inexact}, one may envision starting with a sample set $S_k$ of a minimal size, say $|S_k| = |S_0|$, and simply adding samples until~\cref{eq:NEWcondition_inexact} holds.
This strategy, however, would be too expensive to be practical as it would require recomputing $R_{S_k}(x_k)$ each time the set $S_k$ is updated.
Because of the expense of applying $\proj(\cdot)$, we choose to only consider strategies which involve computing $R_{S_k}(x_k)$ once each iteration.

One natural thing to consider is to use \cref{eq:NEWcondition_inexact} to predict the correct size of the \emph{upcoming} sample set $S_{k+1}$.
The prediction of an a posteriori sample size for the next iteration is also presented in \cite{Bollapragada2018a}, where unconstrained problems are considered.
For the constrained optimization problems at hand, such a strategy may work as follows.
Begin by dividing the left-hand side of~\cref{eq:NEWcondition_inexact} by $\theta^2 \lVert \gradCsub(\varit) \rVert^2$ and, in turn, define the new quantity
\begin{equation}
	\rho
	=
	\frac{\sum_{\xi\in S_k}\|\nabla f(x_k;\xi) - \nabla F_{S_k}(x_k)\|^2}{\theta^2 (|S_k|-1)|S_k|\lVert \gradCsub(\varit) \rVert^2}
	.
\label{eq:InexactTests}
\end{equation}
When~\cref{eq:NEWcondition_inexact} is satisfied, we clearly have $\rho \leq 1$, and we simply keep the sample size fixed; that is, $|S_{k+1}| = |S_k|$.
On the other hand, if the test fails, $\rho>1$ is used to increase the sample size via the update rule
\begin{equation}
	|S_{k+1}| = \lceil\rho\,|S_k|\rceil
	.
\label{eq:UpdateRule}
\end{equation}
The procedure above leads to the following algorithm:

\begin{algorithm2e}[H]
\DontPrintSemicolon
	\caption{\label{alg:practical} SPGD adaptive sampling algorithm for convex stochastic programs}
	\SetKwInOut{Input}{input}
	\Input{$\var_0$, step size $\alpha>0$, initial sample set $S_0$, sampling rate parameter $\theta > 0$}
	Set $\its \leftarrow 0$.\;
	\Repeat{a convergence test is satisfied}
	{
		Update $\varitnext = Q_\subsampling(\varit)$.\;
		\If{\Cref{con:NewCondition_approx} is not satisfied}
		{Construct $S_{k+1}$ obeying~\cref{eq:UpdateRule}.\;}
		\Else
		{Construct $S_{k+1}$ satisfying $|S_{k+1}| = |S_k|$.\;}
		Set $\its \leftarrow \its+1$.\;
	}
\end{algorithm2e}

\revised{
\begin{remark}
	Upon rewriting $R_{S_k}(x_k) = (x_k - x_{k+1})/\alpha$, it is clear that \Cref{con:NewCondition_approx} can be checked without performing any additional projections.
	The fact that~\Cref{alg:practical} requires only one projection per iteration makes the algorithm particular appealing when the most expensive ingredient is the evaluation of the projection operator $P\colon \R^n \to C$.
\end{remark}
}

\begin{remark}
	\revised{
	It is important to note that~\Cref{alg:practical} is only an approximation of the algorithm analyzed in~\Cref{thm:StronglyConvex} since \Cref{con:NewCondition_approx} differs in two important ways from \Cref{con:NEWcondition}.
	In particular, the reduced gradient $R(x_k)$ on the right-hand side of~\Cref{con:NEWcondition} is replaced by the estimate $R_{S_k}(x_k)$ and the variance on the left-hand side of~\cref{eq:Condition3Rewrite} is replaced by the sample variance $\frac{1}{|S_k|-1}\sum_{\xi\in S_k}\|\nabla f(x_k;\xi) - \nabla F_{S_k}(x_k)\|^2$.
	Because of these differences,} we cannot guarantee the same convergence rates predicted by \Cref{thm:StronglyConvex}.
	Nevertheless, as we will see in~\Cref{sec:applications}, our experiments with \Cref{alg:practical} demonstrate extremely good agreement with the theoretical results of \Cref{thm:StronglyConvex}.
	Previous authors have made similar observations for their own practical adaptive sampling strategies \cite{Bollapragada2018a,xie2020constrained}.
	These repeated observations hint at a promising robustness in the adaptive sampling technique used here.
\end{remark}

\begin{remark}[Comparison to \cite{xie2020constrained}]
\label{rem:ComparisontoXie3}
	Although the original conditions and analysis differ in numerous ways, the practical adaptive sampling algorithm proposed in \cite[Section~3.5]{xie2020constrained} differs only marginally from~\Cref{alg:practical}.
	Indeed, the only minor difference is that the practical algorithm in \cite{xie2020constrained} requires computing a second search direction before advancing to the next iteration when \Cref{con:NewCondition_approx} is not satisfied.
	\revised{We propose a variant of this approach in~\Cref{sec:nonconvex-applications}.}
\end{remark}

\section{Risk-averse problems} %
\label{sec:risk_averse_problems}

\revised{We now turn towards extending the algorithm proposed above}
to stochastic programs involving the conditional value-at-risk.
We present two different approaches \revised{to achieving this goal}; both involve a regularization technique proposed in \cite{kouri2016risk} and rewriting $\CVaR_\beta(X)$ as the solution of an auxiliary optimization problem\revised{.}
Our first method follows a well-established course of action in risk-averse stochastic programming \cite{kouri2016risk,Shapiro2009,Kouri2018a} and conforms to the assumptions used in the previous \revised{section}.
Our second method involves solving an additional one-dimensional optimization problem at each iteration.

\subsection{Conditional value-at-risk}
\label{sub:cvar}

Let $\Psi_X(x) := \bbP(X\leq x)$ denote the cumulative distribution function (CDF) of a random variable $X$.
The value-at-risk ($\mathrm{VaR}$) of $X$, at confidence level $0<\beta<1$, also known as the $\beta$-quantile, is defined by
\begin{equation}
\mathrm{VaR}_\beta(X)
:=
\inf\,\{t\in\R \,:\, \Psi_X(t) \geq \beta\}
\,.
\label{eq:VaR}
\end{equation}
The conditional value-at-risk ($\mathrm{CVaR}$) of $X$, at confidence level $\beta$, is essentially the expected value of $X$ beyond $\mathrm{VaR}_\beta(X)$.
Indeed, if $\Psi_X(x)$ is right-continuous, then $\mathrm{CVaR}_\beta(X)$ is precisely the conditional expectation $\bbE[X| X>\mathrm{VaR}_\beta(X)]$.
This implies that $\mathrm{CVaR}_\beta(X)\geq\E[X]$.
In order to accommodate more general CDFs, one may alternatively define $\mathrm{CVaR}_\beta(X)$ as the weighted integral of the value-at-risk over the interval $(\beta,1)$,
\begin{align}
\mathrm{CVaR}_\beta(X)
&:=
\frac{1}{1-\beta}
\int_\beta^1
\mathrm{VaR}_\alpha(X)
\dd\alpha.
\label{eq:CVaR-def}
\end{align}
Since $\mathrm{VaR}_\alpha(X)$ is a non-decreasing function of $\alpha$, note that
\begin{equation}
\mathrm{CVaR}_\beta(X)
\geq
\frac{1}{1-\beta}
\mathrm{VaR}_\beta(X)
\int_\beta^1
\dd\alpha
=
\mathrm{VaR}_\beta(X)
\,.
\label{eq:CVaRBoundsVaR}
\end{equation}

In many applications, $\mathrm{CVaR}_\beta(X)$ is a more useful measure of risk than $\mathrm{VaR}_\beta(X)$ because controlling expected failure states \revised{can be} more important than controlling the most optimistic failure state \revised{only}.
For instance, consider
\revised{the case where}
$X$ can be identified with a stress acting on/within a physical system.
In such scenarios, lower values of $X$ are generally preferable to higher values of $X$.
Thus, $\mathrm{VaR}_\beta(X)$ represents the most optimistic value that $X$ can achieve in the worst $(1-\beta)\cdot 100$ percent of possible events.
Alternatively, $\mathrm{CVaR}_\beta(X)$ represents the expected value of $X$ in the worst $(1-\beta)\cdot 100$ percent of possible events.

The properties above make $\CVaR_\beta(X)$ a suitable risk measure for industrial optimization problems \cite{TyrrellRockafellar2015}.
There are a variety of ways to treat stochastic programs which incorporate the $\CVaR$ \cite{kouri2016risk,Curi2019}.
However, in this work, we find that the following ``dual formulation'' is particularly useful.

In \cite{Rockafellar2000} it is shown that $\CVaR_\beta(X)$ can be interpreted as the solution of a scalar optimization problem; namely,
\begin{align}
	\mathrm{CVaR}_\beta(X)
	=
	\inf_{t\in\R}
	\Big\{
	t + \frac{1}{1-\beta}\bbE[(X-t)_+]
	\Big\}
	\,,
	\label{eq:CVaRdual}
\end{align}
where $(x)_+ := \max\{0,x\}$.
Therefore, the stochastic program
\begin{equation}
	\min_{\var\in C}~
	F(x) = 
	\CVaR_\beta[\obj(\var;\rv)]
\label{eq:CVaR_opt}
\end{equation}
can be conveniently reformulated as 
\begin{equation}
	\min_{(\var,t)\in C\times\R}~
	F(x,t) = \E\Big[t + \frac{1}{1-\beta}(\obj(\var;\rv)-t)_+\Big]
	.
\label{eq:CVaR_extended}
\end{equation}
It is well-known that non-smoothness of the operator $(\,\cdot\,)_+$, implies non-smoothness of the objective function $F(x,t)$ \cite{Rockafellar2000}.
Therefore,~\cref{eq:CVaR_extended} is often solved with subgradient types methods; see, e.g., \cite{Royset2013}.
An alternative option is to replace $\CVaR_\beta$ by a smooth approximation, which maintains many of its essential properties.
In this work, we choose to use a smoothing technique proposed by Kouri and Surowiec \cite{kouri2016risk}.

\subsection{Smoothing}
\label{sub:smoothing}

The non-differentiability of $F(x,t)$ can be circumvented by regularizing the $(\cdot)_+$ function.
In \cite[Section~4.1.1.]{kouri2016risk}, several strategies are proposed.
We choose the smooth approximation $(\cdot)_+^\varepsilon$ defined as follows:
\begin{align}
(y)_+^\varepsilon = y + \varepsilon \ln \left(1 + \exp \left(\frac{-y}{\varepsilon}\right)\right).
\end{align}
Likewise, we replace the non-smooth $\CVaR_\beta$ risk measure by the smoothed risk measure
\begin{align}
	\text{CVaR}^\varepsilon_{\beta}(X)
	=\inf_{t\in\R}
	\Big\{
	t + \frac{1}{1-\beta}\bbE[(X-t)_+^\varepsilon]
	\Big\}
\label{eq:smooth_cvar}
\end{align}
and replace~\cref{eq:CVaR_extended} by 
\begin{equation}
	\min_{(\var,t)\in C\times\R}~
	F^\varepsilon(x,t) = \E\Big[t + \frac{1}{1-\beta}(\obj(\var;\rv)-t)_+^\varepsilon\Big]
	.
\label{eq:CVaR_extended_smooth}
\end{equation}

All of the conclusions in the previous sections carry over to the regularized CVaR problem because the objective function $F^\varepsilon(x,t)$ is now smooth.
This means that~\Cref{alg:practical} can be used to solve~\cref{eq:CVaR_extended_smooth}.
It is also important to point out that this smooth CVaR formulation enjoys the advantage that many of the original CVaR properties are preserved, including convexity and monotonicity \cite{kouri2016risk}.
Accordingly, if $\obj(\var;\rv)$ is convex for almost every $\rv$, then $F^\varepsilon(x,t)$ is also convex.

\begin{remark}
\label{rem:bound}
The regularization constant $\varepsilon$ is a problem-dependent parameter which must be tuned.
To guide the tuning process, one may use Lemma~4.3 in \cite{kouri2016risk}, which shows that
\begin{equation}
	| \text{CVaR}^\varepsilon_{\beta}(X) - \text{CVaR}_{\beta}(X) |
	\leq
	\frac{\log 2}{1-\beta} \: \varepsilon
	.
\end{equation}
Thus, the value of $\varepsilon$ necessary to achieve an intended relative error will depend on both the magnitude of $\text{CVaR}_{\beta}(X)$ and the confidence level $\beta$.
A short study on the influence of $\varepsilon$ is carried out in \cite[Chapter~5.1.3]{Urbainczyk2020}.
\end{remark}

\begin{remark}
	The function $(\cdot)_+$ falls into \revised{the} special class of so-called ``scalar regret functions'' \cite{rockafellar2013fundamental}. Specifically, \revised{such} functions $v:\R \to \overline{\R}$ are closed, convex, increasing, and satisfy $v(0) = 0$ and $v(x) > x$ for all $x\neq 0$.
	If one replaces $\frac{1}{1-\beta}(\cdot)_+^\varepsilon$ in~\cref{eq:CVaR_extended_smooth}, with any scalar regret function $v(\cdot)$, then one arrives at an important class of risk-averse stochastic programs, which has also received a great deal of attention \cite{ben1986expected,rockafellar2013fundamental,Kouri2018a}:
	\begin{equation}
		\min_{\var\in C}~
		F(\var) = \mathcal{R}[\obj(\var;\rv)]
		\quad
		\text{where}
		\quad
		\mcR(X)
		=\inf_{t\in\R}
		\Big\{
		t + \bbE[v(X-t)]
		\Big\}
		.
	\end{equation}
	If $v$ is also smooth, then~\Cref{alg:practical} may also be used without further modification to solve this entire family of risk-averse stochastic programs.
\end{remark}

\subsection{Nested quantile estimation} %
\label{sub:alternative_algorithm}

Although~\Cref{alg:practical} can be used to solve~\cref{eq:CVaR_extended_smooth}, when there are only a small number of samples, the initial error may be quite large; cf. \Cref{ssub:risk_averse_portfolio_optimization}.
For this reason, we introduce an alternative algorithm.
We begin with two observations.

It is well-known that the unique minimizer of~\cref{eq:CVaRdual}, $t^\ast$, is simply the value-at-risk; namely,
\begin{equation}
	\mathrm{CVaR}_\beta(X)
	=
	t^\ast + \frac{1}{1-\beta}\bbE[(X-t^\ast)_+]
	,
	\quad
	\text{where}
	\quad
	t^\ast
	=
	\mathrm{VaR}_\beta(X)
	.
\end{equation}
Accordingly, if we assume that $\mathrm{VaR}_\beta(\obj(\var;\rv))$ was somehow determined \textit{a priori}, it would be possible to rewrite~\cref{eq:CVaR_opt} as
\begin{equation}
	\min_{\var\in C}~
	\tilde{F}(x)
	= 
	\bbE[(\obj(\var;\rv)-\mathrm{VaR}_\beta(\obj(\var;\rv)))_+]
	.
\label{eq:CVaR_alt}
\end{equation}
This technique of rewriting~\cref{eq:CVaR_opt} is analogous to the scalar regret function reformulation of stochastic programs involving the entropic risk measure; see, e.g., \cite[Section~2.4.2]{Kouri2018a}.

It turns out that there are a large number of methods to estimate quantiles which are widely available in scientific software such as R \cite{Rmanual}, Python (specifically, SciPy \cite{2020SciPy}), and Julia \cite{bezanson2017julia}.
Any of these approximations could be substituted for $\mathrm{VaR}_\beta(\obj(\var;\rv))$ in~\cref{eq:CVaR_alt}, once a set of samples of $\obj(\var;\rv)$ is collected.
Nevertheless, we choose to approximate the value-at-risk by estimating $t^\ast$ at each iteration and then solving the regularized form of~\cref{eq:CVaR_alt}.
That is, we first compute
\begin{equation}
\label{eq:optimal_t}
	t_{S_k} = \argmin_{t\in\mathbb{R}} \: \left\{t + \frac{1}{1-\beta} \:\frac{1}{|S_k|}\: \sum_{\rv_i\in S_k}(\obj(\var; \rv_i) - t)_+^\varepsilon\right\}
\end{equation}
with a root finding algorithm.
This is no more expensive that a standard line search and generally cheaper than applying $\proj(\cdot)$.
Furthermore, one may argue that $t_{S_k} \to t^\ast$ as $|S_k|\to \infty$.
We then compute the new iterate $x_{k+1} = Q_{S_k}(x_k)$ via the subsampled gradient map of
\begin{equation}
	\tilde{F}^\varepsilon_{S_k}(x)
	:=
	\frac{1}{|S_k|}\sum_{\rv_i\in S_k}(\obj(\var;\rv)-t_{S_k})_+^\varepsilon
	.
\end{equation}
The entire adaptive sampling process is described in \Cref{alg:alternative_cvar}, below.

\begin{algorithm2e}[H]
\DontPrintSemicolon
	\caption{\label{alg:alternative_cvar}Nested quantile estimation and adaptive sampling with $\CVaR$}
	\SetKwInOut{Input}{input}
	\Input{$\var_0$, step size $\alpha>0$, initial sample set $S_0$, constant $\theta > 0$}
	Set $\its \leftarrow 0$.\;
	\Repeat{a convergence test is satisfied}
	{
		Compute $t_{S_k} = \argmin_{t\in\mathbb{R}} \: \big\{t + \frac{1}{1-\beta} \:\frac{1}{|S_k|}\: \sum_{\rv_i\in S_k}(\obj(\varit; \rv_i) - t)_+^\varepsilon\big\}$.\;
		Update $\varitnext = \argmin_{y\in C}
		\big\{
		\tilde{F}_\subsampling(\varit) + \langle \nabla \tilde{F}_\subsampling(\varit),y-\varit \rangle + \frac{1}{2\alpha}\lVert y-\varit \rVert^2
		\big\}$.\;
		\If{\Cref{con:NewCondition_approx} is not satisfied}
		{Construct $S_{k+1}$ obeying~\cref{eq:UpdateRule}.\;}
		\Else
		{Construct $S_{k+1}$ satisfying $|S_{k+1}| = |S_k|$.\;}
		Set $\its \leftarrow \its+1$.\;
	}
\end{algorithm2e}

\section{Numerical examples} %
\label{sec:applications}

In this section, we conduct
\revised{two sets of numerical experiments to illustrate \Cref{alg:practical,alg:alternative_cvar}.}
We begin with a simple example problem which allows us to test the theory presented in \Cref{sub:convergence_rates}.
Subsequently, we assess the practicality and robustness of the adaptive sampling algorithms with a risk-averse portfolio optimization application.
In order to discuss the performance of \Cref{alg:practical,alg:alternative_cvar}, we include plots showing the objective function values at each iteration. These function values were estimated to a high accuracy independent of the algorithms' approximation of the objective function value.

\subsection{Basic example}
\label{subsec:basic_example}
Our first stochastic programming example is inspired by \cite[Section~6.2]{Royset2013}.
Consider a function
\begin{equation}
	\obj(\var;\rv) = \sum_{l=1}^{20}{a_l(\var^l-b_l\rv^l)^2}
	,
	\qquad
	x = (x^1,\ldots,x^{20}),
	\quad
	\xi = (\xi^1,\ldots,\xi^{20}),
\label{eq:Basic_f}
\end{equation}
where the coefficients $a_l \sim \mathsf{Unif}(1,2)$ and $b_l \sim \mathsf{Unif}(-1,1)$ have been randomly sampled once for the sake of simulation and, thereafter, left fixed.
Next, assume that $\rv$ is a random vector where each coefficient $\rv^l \sim \mathsf{Unif}(0,1)$.
Finally, define the admissible set $C=[0,\infty)^{20}$, which is closed, convex, and unbounded.

With the definitions given above, we consider the (risk-neutral) stochastic program  
\begin{equation}
	\min_{\var\in C} ~ \Big\{ \Obj(\var) = \E[\obj(\var;\rv)] \Big\}.
	\label{eq:basic_example}
\end{equation}
Note that this program is strongly convex and that $\obj(\var;\rv)$ is differentiable for every $\rv\in\Xi = [0,1]^{20}$.
Therefore, there exists a unique global minimizer and \Cref{thm:StronglyConvex} applies.
In fact, the unique global minimizer $\varopt=(x^{\ast,1},\dots,x^{\ast,20})$ of~\cref{eq:basic_example} can be written out explicitly; i.e.,
$x^{\ast,l} = \max\{0,{b_l}/{2}\}$, for each $l=1,\dots,20$.

This example has two purposes: first, to suggest that the theory presented in \Cref{sub:convergence_rates} also holds when the practical \Cref{con:NewCondition_approx} is used and, second, to compare the performance of \Cref{alg:practical} with different values of $\theta$.
In \Cref{fig:basic} we see the results from six representative optimization runs.
The first three runs use fixed sample sizes of $|S_k| = 10,~10^3$, and $10^5$, respectively, for all iterations $k$; these runs imitate naive approaches to compare against.
The subsequent three runs each begin with the common initial sample size $|S_0|=10$ and are executed using \Cref{alg:practical} with the parameter values $\theta = 0.5,~1.0$, and $1.5$, respectively.
All of the runs use a fixed step size of $\alpha = 0.025$.
Due to~\Cref{thm:StronglyConvex}, similar results are expected for all step sizes $\alpha < 1/L$ and sufficiently small $\theta > 0$.
We present further experiments on the influence of the step size in \Cref{app:stepsize}, \revised{whereas in the current section we focus on the effects of the adaptive sampling}.

\begin{figure}
	\centering
	\begin{minipage}{0.4\textwidth}
		\centering
		\begin{scaletikzpicturetowidth}{\textwidth}
			\begin{tikzpicture}[scale=\tikzscale,font=\large]
\begin{axis}[
xlabel={Iteration},
ylabel={$\lVert x^* - x_k \rVert$},
xmajorgrids,
ymajorgrids,
ymode = log,
legend pos=south west,
legend style={fill=white, fill opacity=0.6, draw opacity=1, text opacity=1, font=\normalsize},
]

\addplot[black!75!white, ultra thick, dashed, restrict x to domain=0:150] table [x index={0}, y index={3}, col sep=comma] {./Results/BasicExample/statistics_fix10e1.csv};
\addlegendentry{$|S|=10^1$};

\addplot[color1!75!white, ultra thick, dashed, restrict x to domain=0:150] table [x index={0}, y index={3}, col sep=comma] {./Results/BasicExample/statistics_fix10e3.csv};
\addlegendentry{$|S|=10^3$};

\addplot[color2!75!white, ultra thick, dashed, restrict x to domain=0:150] table [x index={0}, y index={3}, col sep=comma] {./Results/BasicExample/statistics_fix10e5.csv};
\addlegendentry{$|S|=10^5$};

\addplot[color3, ultra thick, restrict x to domain=0:150] table [x index={0}, y index={3}, col sep=comma] {./Results/BasicExample/statistics_050.csv};
\addlegendentry{$\theta=0.5$};

\addplot[color5, ultra thick, restrict x to domain=0:150] table [x index={0}, y index={3}, col sep=comma] {./Results/BasicExample/statistics_100.csv};
\addlegendentry{$\theta=1.0$};

\addplot[color4, ultra thick, restrict x to domain=0:150] table [x index={0}, y index={3}, col sep=comma] {./Results/BasicExample/statistics_150.csv};
\addlegendentry{$\theta=1.5$};

\end{axis}
\end{tikzpicture}
		\end{scaletikzpicturetowidth}
	\end{minipage}%
	~~
	\begin{minipage}{0.4\textwidth}
		\centering
		\begin{scaletikzpicturetowidth}{\textwidth}
			\begin{tikzpicture}[scale=\tikzscale,font=\large]
\begin{semilogyaxis}[
xlabel={Iteration},
ylabel={Sample size},
xmajorgrids,
ymajorgrids,
ymode = log,
legend style={fill=white, fill opacity=0.6, draw opacity=1, text opacity=1},
legend pos=north west
]

\addplot[black!75!white, ultra thick, dashed, domain=0:99] {10};
\addlegendentry{$|S| = 10$};

\addplot[color1!75!white, ultra thick, dashed, domain=0:99] {1000};
\addlegendentry{$|S| = 10^3$};

\addplot[color2!75!white, ultra thick, dashed, domain=0:99] {100000};
\addlegendentry{$|S| = 10^5$};

\addplot[color3, ultra thick, restrict x to domain=0:150] table [x index={0}, y index={1}, col sep=comma] {./Results/BasicExample/statistics_050.csv};
\addlegendentry{$\theta = 0.5$};

\addplot[color5, ultra thick, restrict x to domain=0:150] table [x index={0}, y index={1}, col sep=comma] {./Results/BasicExample/statistics_100.csv};
\addlegendentry{$\theta = 1.0$};

\addplot[color4, ultra thick, restrict x to domain=0:150] table [x index={0}, y index={1}, col sep=comma] {./Results/BasicExample/statistics_150.csv};
\addlegendentry{$\theta = 1.5$};

\legend{}; %
\end{semilogyaxis}
\end{tikzpicture}
		\end{scaletikzpicturetowidth}
	\end{minipage}%
	\\
	\begin{minipage}{0.4\textwidth}
		\centering
		\begin{scaletikzpicturetowidth}{\textwidth}
			\begin{tikzpicture}[scale=\tikzscale,font=\large]
\begin{axis}[
xlabel={Iteration},
ylabel={$F(x_k)-F^*$},
xmajorgrids,
ymajorgrids,
legend cell align={left},
ymode = log,
legend style={fill=white, fill opacity=0.6, draw opacity=1, text opacity=1},
legend pos=north east,
declare function = {F_opt=1.547467255164272;},
]

\addplot[black!75!white, ultra thick, dashed, restrict x to domain=0:150] table [x index={0}, y index={6}, col sep=comma] {./Results/BasicExample/statistics_fix10e1.csv};
\addlegendentry{SAA, $|S| = 10^1$};

\addplot[color1!75!white, ultra thick, dashed, restrict x to domain=0:150] table [x index={0}, y index={6}, col sep=comma] {./Results/BasicExample/statistics_fix10e3.csv};
\addlegendentry{SAA, $|S| = 10^3$};

\addplot[color2!75!white, ultra thick, dashed, restrict x to domain=0:150] table [x index={0}, y index={6}, col sep=comma] {./Results/BasicExample/statistics_fix10e5.csv};
\addlegendentry{SAA, $|S| = 10^5$};

\addplot[color3, ultra thick, restrict x to domain=0:150] table [x index={0}, y index={6}, col sep=comma] {./Results/BasicExample/statistics_050.csv};
\addlegendentry{$\theta = 0.5$};

\addplot[color5, ultra thick, restrict x to domain=0:150] table [x index={0}, y index={6}, col sep=comma] {./Results/BasicExample/statistics_100.csv};
\addlegendentry{$\theta = 1.0$};

\addplot[color4, ultra thick, restrict x to domain=0:150] table [x index={0}, y index={6}, col sep=comma] {./Results/BasicExample/statistics_150.csv};
\addlegendentry{$\theta = 1.5$};

\legend{}; %
\end{axis}
\end{tikzpicture}

		\end{scaletikzpicturetowidth}
	\end{minipage}%
	~~
	\begin{minipage}{0.4\textwidth}
		\centering
		\begin{scaletikzpicturetowidth}{\textwidth}
			\begin{tikzpicture}[scale=\tikzscale,font=\large]
\begin{axis}[
xlabel={Gradient evaluations},
ylabel={$F(x_k)-F^*$},
xmajorgrids,
ymajorgrids,
xmode = log,
ymode = log,
legend style={fill=white, fill opacity=0.6, draw opacity=1, text opacity=1},
legend pos=north east,
]

\addplot[black!75!white, ultra thick, dotted, restrict x to domain=0:1000000] table [x index={2}, y index={6}, col sep=comma] {./Results/BasicExample/statistics_fix10e1.csv};
\addlegendentry{SAA, $|S| = 10$};

\addplot[color1!75!white, ultra thick, dotted, restrict x to domain=0:1000000] table [x index={2}, y index={6}, col sep=comma] {./Results/BasicExample/statistics_fix10e3.csv};
\addlegendentry{SAA, $|S| = 10^3$};

\addplot[color2!75!white, ultra thick, dotted, restrict x to domain=0:1000000] table [x index={2}, y index={6}, col sep=comma] {./Results/BasicExample/statistics_fix10e5.csv};
\addlegendentry{SAA, $|S| = 10^5$};

\addplot[color3, ultra thick, restrict x to domain=0:1000000] table [x index={2}, y index={6}, col sep=comma] {./Results/BasicExample/statistics_050.csv};
\addlegendentry{$\theta = 0.5$};

\addplot[color5, ultra thick, restrict x to domain=0:1000000] table [x index={2}, y index={6}, col sep=comma] {./Results/BasicExample/statistics_100.csv};
\addlegendentry{$\theta = 1.0$};

\addplot[color4, ultra thick, restrict x to domain=0:1000000] table [x index={2}, y index={6}, col sep=comma] {./Results/BasicExample/statistics_150.csv};
\addlegendentry{$\theta = 1.5$};

\legend{}; %
\end{axis}
\end{tikzpicture}

		\end{scaletikzpicturetowidth}
	\end{minipage}%
	\caption{
	Comparison of the stochastic approximation with fixed sample sizes and \Cref{alg:practical} applied to the stochastic program~\cref{eq:basic_example}.
	The top-left and bottom-left plots show the error in the solution vs. the iteration number and the error in the objective function vs. the iteration number, respectively.
	The bottom-right plot shows the error in the objective function vs. the cumulative number of gradient evaluations.
	\label{fig:basic}}
\end{figure}

The leftmost plots in \Cref{fig:basic} illustrate q-linear convergence for each of the adaptive sampling runs, albeit, at different levels of efficiency.
Recalling \Cref{thm:StronglyConvex}, this is the best outcome one could hope for.
For all smaller values of $\theta>0$, the algorithm continues to converge linearly, however, for larger values of $\theta$, the convergence eventually breaks down.
The value of $\theta$ where linear convergence fails depends on the step size $\alpha$, as one would expect from \Cref{thm:StronglyConvex}.
In contrast, the fixed sample size examples with $|S| = 10^1,~10^3$ eventually stop converging. The same would happen in the case of the $|S| = 10^5$ example, given enough iterations. Since we use a fixed step size here, this is the expected behaviour.

The plots on the right in \Cref{fig:basic} provide the sample sizes and resulting gradient evaluations used to obtain the results shown on the left. In the top-right, we observe that the adaptive algorithm increases the sample size roughly exponentially. This behaviour can be interpreted positively from \cite[Section 5]{bottou2018optimization}. Indeed, assuming a uniform bound on the individual gradient samples' variance, an exponentially increasing sample size leads to the variance of the resulting gradient estimate decreasing exponentially. This allows \Cref{alg:practical} to converge linearly and, in this regard, outperform the fixed-sample size algorithm.
The bottom-right plot shows the number of gradient evaluations required for each fixed sample size or value of $\theta$. When using fixed sample and step sizes, the error in the objective function will eventually stop decreasing.
This is avoided when using our adaptive sampling strategy.
Moreover, the number of computed gradient samples can be significantly reduced by adopting an adaptive sample size rule, especially in the early stages of the optimization, when the objective function error is still large.
As a rule of thumb in choosing the adaptive sampling parameter $\theta$, we suggest that one starts with a value around $1.0$ and then track the adaptive algorithm until the first significant growth in the sample size plateaus.
If there has already been a meaningful decrease in the objective value by this point, keep $\theta$ fixed; otherwise, $\theta$ should probably be decreased moderately.
In all cases we have looked at, a reasonable value for $\theta$ can be chosen based on the behavior of the algorithm in its first 10 to 20 iterations.

\subsection{Portfolio optimization}
\label{subsec:operations_research}

With this set of optimization problems, we continue to illustrate the practicality of the adaptive sampling algorithm proposed above.
Specifically, we choose to focus on a class of archetypal operations research problems taken from \cite[Section~6.1]{Royset2013}.
\revised{In this example,} we incorporate the paradigm of risk-averse stochastic optimization; cf. \Cref{sec:risk_averse_problems}.

\subsubsection{Problem description} %
\label{ssub:problem_description}

Let us consider a random cost model with $\vardim=100$ financial instruments whose outputs are each given as $\rv = A + B u$.
In this model, $A$ is a $\vardim$-dimensional vector representing the expected rate of return of a single instrument and $B$ is an $\vardim\times\vardim$-dimensional matrix which correlates the uncertainty in this return.
Each component of $A$ is defined through an independent sample of a uniform distribution over $[0.9,1.2]$ and, likewise, each entry in $B$ is defined by an independent sample of a uniform distribution over $[0,0.1]$.
As with the model parameters $a_l$ and $b_l$ appearing in~\cref{eq:Basic_f}, both $A$ and $B$ only specify parameters in the model.
Therefore, $A$ and $B$ are randomly generated and then held fixed throughout the entire optimization process.
Finally, each component of the $\vardim$-dimensional random vector $u$, which itself acts to introduce uncertainty in the model, is taken to be independent and obey a standard normal distribution.

Given the financial instrument model described above, we now consider the investment of one share of wealth distributed over the $n=100$ independent random financial instruments.
We choose to denote the amount of investment into the $l$-th asset by $\var^l\geq 0$, whereby $\sum_{l=1}^{100}{\var^l} = 1$.
Accordingly, we arrive at the following (stochastic) loss function:
\begin{equation}
	\obj(\var;\rv)=-\sum_{l=1}^{100}{\rv^l \var^l},
\label{eq:stochastic_loss_function}
\end{equation}
where $\var=(\var^1,\dots,\var^{100})$ is our given portfolio allocation strategy.

Let us say that we would like to minimize the loss over all portfolio strategies which have an expected return no smaller than $1.05$.
We therefore define the following admissible set of normalized portfolios:
\begin{equation}
	C = \left\{ \var \in \R^{100} : \var^l \geq 0,\quad \sum_{l=1}^{100}{\var^l} = 1, \quad \sum_{l=1}^{100}{A_l \var^l} \geq 1.05 \quad l=1,\dots,100 \right\}.
\end{equation}

In a risk-neutral paradigm, we seek only to minimize the expected value of~\cref{eq:stochastic_loss_function} over $C$.
The corresponding stochastic program is simply
\begin{equation}
	\min_{\var\in C} ~ \Big\{ \Obj(\var) = \E[\obj(\var;\rv)] \Big\}.
	\label{eq:finance-E}
\end{equation}
With this definition of $F(x)$, the strong convexity assumption made in~\Cref{thm:StronglyConvex} is not satisfied.

It turns out that the expected loss problem above tends not to serve well for most practical investment decisions.
Alternatively, one can minimize the loss with $\CVaR_\beta$ as a risk measure; this is a common choice in financial applications \cite{dowd2007measuring,follmer2011stochastic}.
Accordingly, we focus on the following class of risk-averse stochastic programs:
\begin{equation}
	\min_{\var\in C} ~ \Big\{ \Obj_\beta(\var) = \CVaR_\beta[\obj(\var;\rv)] \Big\}
	,
	\label{eq:finance-CVaR}
\end{equation}
where $\beta\in[0,1)$ is the risk-averseness parameter.
Note that $F(x) = F_0(x)$ in the notation of~\cref{eq:finance-E,eq:finance-CVaR}, and so the risk-neutral program~\cref{eq:finance-E} has not actually been ignored \cite{Rockafellar2000,rockafellar2002conditional}.

\begin{remark}
	As already pointed out in \Cref{sec:risk_averse_problems}, the $\CVaR$ risk measure introduces non-smoothness into the objective functional which commonly breaks the convergence of traditional gradient descent algorithms.
	Therefore we follow \Cref{sub:smoothing} in our experiments and replace the $\CVaR$ in~\cref{eq:finance-CVaR} by the risk measure $\text{CVaR}^\varepsilon_{\beta}$, defined in~\cref{eq:smooth_cvar}, with some small regularization parameter $\varepsilon > 0$.
\end{remark}

\subsubsection{Risk-averse portfolio optimization}
\label{ssub:risk_averse_portfolio_optimization}

In our first set of portfolio optimization experiments, we compare the performance of \Cref{alg:practical} on the stochastic program~\cref{eq:finance-CVaR}, for a variety of risk-averseness parameters $\beta>0$.
Recall~\cref{eq:smooth_cvar} and note that each of these problems may be written as
\begin{equation}
	\min_{(\var,t) \in C\times\R}
	~ \Big\{ F_\beta(x,t) = t + \frac{1}{1-\beta} \: \mathbb{E}\left[(\obj(\var;\rv) - t)_+^\varepsilon\right] \Big\}
	,
\label{eq:portfolio_opt_extended_cvar}
\end{equation}
after regularization.
Because our experiments in \Cref{subsec:basic_example} already indicated a robustness with respect to the algorithm parameter $\theta$, we choose to focus our attention here on its sensitivity to the risk-averseness parameter $\beta$.
In this example, we consider $\beta = 0,~0.5,~0.9,$ and $0.95$.
For all $\beta>0$, we set $\varepsilon = 0.01$.
The value for $\varepsilon$ is chosen as a compromise between a small error w.r.t. the true CVaR and a well-behaved objective function. A bound for this error was introduced in \cite{kouri2016risk}, see also \Cref{rem:bound}.
\revised{The step size is kept constant as before, this time set to $\alpha=0.5$. The value was manually selected such that the simulations give reasonable results, but was not extensively tuned.}

Note that $\beta$ actually changes the optimization problem.
Hence, for each $\beta$, we choose a different sampling rate parameter $\theta$.
For $\beta=0$ and $\beta=0.5$, we set $\theta=2.0$; for $\beta = 0.9$, we take $\theta=1.5$; and for $\beta=0.95$, we specify $\theta=0.125$.
These parameters were chosen to promote comparable growth of the sample size across the different cases.
Generally, as $\beta < 1$ grows, $\theta$ should shrink.
The effect of the sampling rate $\theta$ on the sample size is illustrated further in \Cref{app:sampling_rate}.

\Cref{fig:finance-risk_measures} presents the results of our numerical experiments.
When the risk averseness parameter $\beta$ is increased, we expect that the achieved objective value increases, too.
This feature is clearly observed in~\Cref{fig:finance-risk_measures}.
It is also evident from \Cref{fig:finance-risk_measures} that the initial value of the objective function, $F_\beta(x_0,t_0)$, moves progressively further from its optimal value as $\beta$ grows.
Additionally, for $\beta=0.9$ and $\beta=0.95$, the algorithm fails to improve the objective during the first few iterations.
Both effects are largely due to the fact that the same initial value of the auxiliary variable, $t=t_0$, has been used in each experiment.
In turn, the algorithm takes longer to converge as $\beta$ increases.
It should be noted that the auxiliary variable is optimized using the same step size as used with the spatial variable $x$ without accounting for possibly different scales.

This experiment shows that the performance of \Cref{alg:practical} is very sensitive to the choice of the initial value $t_0$.
In light of \Cref{sub:alternative_algorithm}, one could set $t_0$ to be the solution of~\cref{eq:optimal_t} to obtain a first estimate of the optimal value of $t$ and thus circumvent this issue.
Of course, however, optimizing for $t$ independently of $x$ is essentially what is done at each iteration of \Cref{alg:alternative_cvar}.

\begin{figure}
	\centering
	\begin{minipage}{0.3\textwidth}
		\centering
		\begin{scaletikzpicturetowidth}{\textwidth}
			\begin{tikzpicture}[scale=\tikzscale,font=\large]
\begin{axis}[
xlabel={Iteration},
ylabel={$F_\beta(x_k,t)$},
xmajorgrids,
ymajorgrids,
legend cell align={left},
legend style={fill=white, fill opacity=0.6, draw opacity=1, text opacity=1},
legend pos=north east
]
\addplot[color1, ultra thick] table [x index={0}, y index={3}, col sep=comma, restrict x to domain=0:45] {./Results/FinanceExample/E/statistics.csv};
\addlegendentry{$\beta=0.0$};

\addplot[color2, ultra thick] table [x index={0}, y index={3}, col sep=comma, restrict x to domain=0:60] {./Results/FinanceExample/CVaR-classic/statistics_050.csv};
\addlegendentry{$\beta=0.5$};

\addplot[color3, ultra thick] table [x index={0}, y index={3}, col sep=comma, restrict x to domain=0:60] {./Results/FinanceExample/CVaR-classic/statistics_090.csv};
\addlegendentry{$\beta=0.9$};

\addplot[color8, ultra thick] table [x index={0}, y index={3}, col sep=comma, restrict x to domain=0:100] {./Results/FinanceExample/CVaR-classic/statistics_095.csv};
\addlegendentry{$\beta=0.95$};

\end{axis}
\end{tikzpicture}
		\end{scaletikzpicturetowidth}
	\end{minipage}%
	~
	\begin{minipage}{0.3\textwidth}
		\centering
		\begin{scaletikzpicturetowidth}{\textwidth}
			\begin{tikzpicture}[scale=\tikzscale,font=\large]
	\begin{axis}[
		xlabel={Gradient evaluations},
		ylabel={$F_\beta(x_k,t)$},
		xmajorgrids,
		ymajorgrids,
		legend cell align={left},
		xmode = log,
		legend style={fill=white, fill opacity=0.6, draw opacity=1, text opacity=1},
		legend pos=north east
		]
		\addplot[color1, ultra thick] table [x index={2}, y index={3}, col sep=comma] {./Results/FinanceExample/E/statistics.csv};
		\addlegendentry{$\beta=0.0$};
		
		\addplot[color2, ultra thick] table [x index={2}, y index={3}, col sep=comma] {./Results/FinanceExample/CVaR-classic/statistics_050.csv};
		\addlegendentry{$\beta=0.5$};
		
		\addplot[color3, ultra thick] table [x index={2}, y index={3}, col sep=comma] {./Results/FinanceExample/CVaR-classic/statistics_090.csv};
		\addlegendentry{$\beta=0.9$};
		
		\addplot[color8, ultra thick] table [x index={2}, y index={3}, col sep=comma] {./Results/FinanceExample/CVaR-classic/statistics_095.csv};
		\addlegendentry{$\beta=0.95$};
		
		\legend{}; %
	\end{axis}
\end{tikzpicture}
		\end{scaletikzpicturetowidth}
	\end{minipage}%
	~
	\begin{minipage}{0.3\textwidth}
		\centering
		\begin{scaletikzpicturetowidth}{\textwidth}
			\begin{tikzpicture}[scale=\tikzscale,font=\large]
\begin{axis}[
xlabel={Iteration},
ylabel={Sample size},
xmajorgrids,
ymajorgrids,
ymode = log,
legend style={fill=white, fill opacity=0.6, draw opacity=1, text opacity=1},
legend pos=north west
]
\addplot[color1, ultra thick] table [x index={0}, y index={1}, col sep=comma, restrict x to domain=0:45] {./Results/FinanceExample/E/statistics.csv};
\addlegendentry{$\beta=0.0$};

\addplot[color2, ultra thick] table [x index={0}, y index={1}, col sep=comma] {./Results/FinanceExample/CVaR-classic/statistics_050.csv};
\addlegendentry{$\beta=0.5$};

\addplot[color3, ultra thick] table [x index={0}, y index={1}, col sep=comma] {./Results/FinanceExample/CVaR-classic/statistics_090.csv};
\addlegendentry{$\beta=0.9$};

\addplot[color8, ultra thick] table [x index={0}, y index={1}, col sep=comma] {./Results/FinanceExample/CVaR-classic/statistics_095.csv};
\addlegendentry{$\beta=0.95$};

\legend{}; %
\end{axis}
\end{tikzpicture}
		\end{scaletikzpicturetowidth}
	\end{minipage}%

	\caption{Numerical results for the portfolio optimization problem~\cref{eq:finance-CVaR} for the risk-averseness parameters $\beta=0.0$ (which also corresponds to the risk-neutral problem~\cref{eq:finance-E}), $0.5,~0.9,$ and $0.95$.
	On the left, we see the value of the objective function converge with respect to the iteration number.
	In the middle, we see the value of the objective function converge with respect to the cumulative number of gradient evaluations.
	On the right, we see the sample size grow with respect to the iteration number.
	\label{fig:finance-risk_measures}
	}
\end{figure}

\subsubsection[Risk-averse portfolio optimization with Algorithm 3]{Risk-averse portfolio optimization with \Cref{alg:alternative_cvar}}
\label{sssub:risk_avere_portfolio_optimization_Algo4}
Here, we briefly compare \Cref{alg:alternative_cvar} to \Cref{alg:practical}.
Since the setting $\beta = 0$ simply amounts to problem~\cref{eq:finance-E}, our comparison only involves $\beta=0.5,~0.9$, and $0.95$.
It may be seem natural to also choose the same values of $\theta$ used in~\Cref{ssub:risk_averse_portfolio_optimization}, however, we found that the auxiliary variable $t$ appearing in~\cref{eq:portfolio_opt_extended_cvar} has a strong effect on variance of the objective function.
Because the gradient with respect to $t$ does not appear in \Cref{alg:alternative_cvar}, we were able to use larger values of $\theta$ than in~\Cref{ssub:risk_averse_portfolio_optimization} and this tended to result in better sample size efficiency.
To be specific, for $\beta=0.5$, we set $\theta=4.0$; for $\beta=0.9$, we set $\theta = 4.5$; and for $\beta=0.95$, we set $\theta=4.5$.

\begin{figure}
	\centering
	\begin{minipage}{0.3\textwidth}
		\centering
		\begin{scaletikzpicturetowidth}{\textwidth}
			\begin{tikzpicture}[scale=\tikzscale,font=\large]
\begin{axis}[
xlabel={Iteration},
ylabel={$F_\beta(x_k,t)$},
xmajorgrids,
ymajorgrids,
legend columns=2,
legend cell align={left},
legend style={fill=white, fill opacity=0.9, draw opacity=1, text opacity=1, font=\normalsize},
legend pos=north east
]
\setlength{\fboxrule}{0pt}
\addlegendimage{legend image with text=\fbox{Alg. \ref{alg:practical}}}
\addlegendentry{}
\addlegendimage{legend image with text=\fbox{Alg. \ref{alg:alternative_cvar}}}
\addlegendentry{}

\addplot[color2!70!white, ultra thick, dashed] table [x index={0}, y index={3}, col sep=comma ] {./Results/FinanceExample/CVaR-classic/statistics_050.csv};
\addlegendentry{};
\addplot[color2, ultra thick] table [x index={0}, y index={3}, col sep=comma,] {./Results/FinanceExample/CVaR-new/statistics_050.csv};
\addlegendentry{$\beta = 0.5$};

\addplot[color3!70!white, ultra thick, dashed] table [x index={0}, y index={3}, col sep=comma] {./Results/FinanceExample/CVaR-classic/statistics_090.csv};
\addlegendentry{};
\addplot[color3, ultra thick] table [x index={0}, y index={3}, col sep=comma] {./Results/FinanceExample/CVaR-new/statistics_090.csv};
\addlegendentry{$\beta = 0.9$};

\addplot[color8!70!white, ultra thick, dashed] table [x index={0}, y index={3}, col sep=comma] {./Results/FinanceExample/CVaR-classic/statistics_095.csv};
\addlegendentry{};
\addplot[color8, ultra thick] table [x index={0}, y index={3}, col sep=comma,, restrict x to domain=0:45] {./Results/FinanceExample/CVaR-new/statistics_095.csv};
\addlegendentry{$\beta = 0.95$};

\end{axis}
\end{tikzpicture}
		\end{scaletikzpicturetowidth}
	\end{minipage}%
	~
	\begin{minipage}{0.3\textwidth}
		\centering
		\begin{scaletikzpicturetowidth}{\textwidth}
			\begin{tikzpicture}[scale=\tikzscale,font=\large]
\begin{axis}[
xlabel={Gradient evaluations},
ylabel={$F_\beta(x_k,t)$},
xmajorgrids,
ymajorgrids,
legend columns=2,
legend cell align={left},
xmode = log,
legend style={fill=white, fill opacity=0.9, draw opacity=1, text opacity=1, font=\small},
legend pos=north east
]
\setlength{\fboxrule}{0pt}
\addlegendimage{legend image with text=\fbox{Alg. \ref{alg:practical}}}
\addlegendentry{}
\addlegendimage{legend image with text=\fbox{Alg. \ref{alg:alternative_cvar}}}
\addlegendentry{}

\addplot[color2!70!white, ultra thick, dashed] table [x index={2}, y index={3}, col sep=comma ] {./Results/FinanceExample/CVaR-classic/statistics_050.csv};
\addlegendentry{};
\addplot[color2, ultra thick] table [x index={2}, y index={3}, col sep=comma,] {./Results/FinanceExample/CVaR-new/statistics_050.csv};
\addlegendentry{$\beta = 0.5$};

\addplot[color3!70!white, ultra thick, dashed] table [x index={2}, y index={3}, col sep=comma] {./Results/FinanceExample/CVaR-classic/statistics_090.csv};
\addlegendentry{};
\addplot[color3, ultra thick] table [x index={2}, y index={3}, col sep=comma] {./Results/FinanceExample/CVaR-new/statistics_090.csv};
\addlegendentry{$\beta = 0.9$};

\addplot[color8!70!white, ultra thick, dashed] table [x index={2}, y index={3}, col sep=comma] {./Results/FinanceExample/CVaR-classic/statistics_095.csv};
\addlegendentry{};
\addplot[color8, ultra thick] table [x index={2}, y index={3}, col sep=comma] {./Results/FinanceExample/CVaR-new/statistics_095.csv};
\addlegendentry{$\beta = 0.95$};

\legend{}; %
\end{axis}
\end{tikzpicture}
		\end{scaletikzpicturetowidth}
	\end{minipage}%
	~
	\begin{minipage}{0.3\textwidth}
		\centering
		\begin{scaletikzpicturetowidth}{\textwidth}
			\begin{tikzpicture}[scale=\tikzscale,font=\large]
\begin{axis}[
xlabel={Iteration},
ylabel={Sample size},
xmajorgrids,
ymajorgrids,
ymode = log,
legend style={fill=white, fill opacity=0.6, draw opacity=1, text opacity=1},
legend pos=north west
]

\addplot[color2!70!white, ultra thick, dashed] table [x index={0}, y index={1}, col sep=comma] {./Results/FinanceExample/CVaR-classic/statistics_050.csv};
\addlegendentry{Algorithm \ref{al:smooth_cvar}};
\addplot[color2, ultra thick] table [x index={0}, y index={1}, col sep=comma] 
{./Results/FinanceExample/CVaR-new/statistics_050.csv};
\addlegendentry{Algorithm \ref{al:smooth_cvar-classic}};

\addplot[color3!70!white, ultra thick, dashed] table [x index={0}, y index={1}, col sep=comma] {./Results/FinanceExample/CVaR-classic/statistics_090.csv};
\addlegendentry{Algorithm \ref{al:smooth_cvar}};
\addplot[color3, ultra thick] table [x index={0}, y index={1}, col sep=comma] 
{./Results/FinanceExample/CVaR-new/statistics_090.csv};
\addlegendentry{Algorithm \ref{al:smooth_cvar-classic}};

\addplot[color8!70!white, ultra thick, dashed] table [x index={0}, y index={1}, col sep=comma] {./Results/FinanceExample/CVaR-classic/statistics_095.csv};
\addlegendentry{Algorithm \ref{al:smooth_cvar}};
\addplot[color8, ultra thick] table [x index={0}, y index={1}, col sep=comma] 
{./Results/FinanceExample/CVaR-new/statistics_095.csv};
\addlegendentry{Algorithm \ref{al:smooth_cvar-classic}};

\legend{}; %
\end{axis}
\end{tikzpicture}
		\end{scaletikzpicturetowidth}
	\end{minipage}%
	\caption{Numerical results for problem~\cref{eq:finance-CVaR}, with $\beta=0.5,~0.9$, and $0.95$, comparing \Cref{alg:practical,alg:alternative_cvar}. \label{fig:finance-alternative}}
\end{figure}

The results of our comparison are presented in~\Cref{fig:finance-alternative}.
Evidently, both algorithms converge to the same optimal objective value.
Although \Cref{alg:alternative_cvar} involves solving a one-dimensional optimization problem at each iteration, it also appears to generate fewer samples which could make it more efficient overall in some applications.
\section{Adaptive sampling with non-convex constraints} %
\label{sec:nonconvex-applications}

Up to this point, the convexity of the constraint set $C$ has been critical.
In general, it is even required to uniquely define the orthogonal projection~\cref{eq:ProjectionOperator}.
Nevertheless, many optimization problems involve non-convex constraints, and we seek to show that some of the ideas introduced above can still be used in that setting.
Our treatment is not intended to be comprehensive; we give one practical example and leave its generalizations for future study.

\subsection{Treatment of non-convex constraints} %
\label{sub:non_convex_problems}

Assume that we are required to optimize over the level set of a smooth function $G: \R^n \to \R$, \revised{and so} we rewrite~\cref{eq:P-C} as
\begin{equation}
	\min_{\var\in \R^n}~
	\Big\{ F(\var) = \bbE[\obj(\var;\rv)] 
	\quad 
	\text{subject to } G(\var)=0
	\Big\}
	.
\end{equation}
\revised{Let us also assume that the gradient of $G$ never vanishes on the feasible set.}
\revised{In this case, the linear indepence constraint qualification (LICQ) is trivially satisfied because the set of constraint gradients is always one-dimensional and the above} problem can be solved with sequential quadratic programming (SQP) principles \cite{nocedal2006numerical,hinze2008optimization,Ulbrich2012}.

\revised{For simplicity, let $B_k$ be a symmetric positive definite (SPD) matrix}\footnote{\revised{It is well-known that we may relax this requirement to being that $B_k$ is only SPD on the tangent space of the constraint set; cf. \cite{powell1983variable,nocedal2006numerical}.}} and define $\lVert d \rVert_{B_k} = \sqrt{\langle d, B_k d \rangle}$.
Typically, we seek the optimum
\begin{equation}
	d_{S_k}
	=
	\argmin_{d\in \R^\vardim}~
	\langle \nabla \Obj_\subsampling(\varit), d \rangle
	+
	\frac{1}{2}\lVert d \rVert_{B_k}^2
	\quad
	\text{subject to } \langle \nabla G(\varit), d\rangle + G(\varit) = 0
	,
\label{eq:SQP}
\end{equation}
and update the solution $x_k \mapsto x_k + \alpha d_{S_k}$.
In its most basic form \cite{powell1983variable}, we may assume that each $B_k = I$ and so $\lVert d \rVert_{B_k} = \lVert d \rVert$.
In this setting, a straightforward computation shows that $d_{S_k} = - R_{S_k}(x_k)$ when the affine \revisedBW{subspace} $$C_k = \{y \in \R^n \colon \langle \nabla G(\varit), y - x_k\rangle + G(\varit) = 0 \}$$ is substituted for $C$ in definition~\cref{eq:GradientMap_sampled}.
This observation establishes a well-known connection between projected gradient algorithms and SQP \cite{powell1983variable}.
As such, it also provides a connection between the preceding analysis and a treatment of non-convex constraints where a linearized constraint space is updated at each iteration $k$.
Throughout the rest of this section, when we refer to $R_{S_k}(x_k)$ or $R(x_k)$, we assume that $C = C_k$.

As stated in~\Cref{rem:Affine}, \Cref{con:Bias} is trivially satisfied when $C$ is \revisedBW{an affine subspace}.
Of course, this happens to be the case in~\cref{eq:SQP} because every $C_k$ is \revisedBW{an affine subspace}.
It is interesting to note that an affine \revisedBW{subspace} constraint makes it possible to propose alternatives to \Cref{con:NEWcondition} that are less restrictive on the size of the sample set $S_k$.
One possibility is to propose a threshold on the expected value of $R_{S_k}(x_k)$ lying within a ball around $R(x_k)$.
This may be written as follows:
\smallskip
\begin{condition}
\label{con:NormTest2}
	Control of the error in the reduced gradient by the norm of the reduced gradient:
	\begin{equation}
	\label{eq:ExactNormTest}
		\bbE_k\left[\lVert \gradCsub(\varit) - \gradC(\varit) \rVert^2\right]
		\leq
		\theta^2 \lVert \gradC(\varit) \rVert^2
		,
	\end{equation}
	for some fixed $\theta > 0$.
\end{condition}
\smallskip
We note that $\bbE_k[R_{S_k}(x_k)] = R(x_k)$ by the affine nature of $P:\R^n \to C_k$.
Therefore, in this specific setting, we have the identity
\begin{equation}
	\bbE_k\left[\lVert R_{S_k}(x_k) - R(x_k) \rVert^2\right]
	=
	\bbE_k\left[\lVert R_{S_k}(x_k)\rVert^2\right] - \bbE_k\left[\lVert R(x_k) \rVert^2\right]
	,
\end{equation}
and so~\Cref{con:NormTest2} is actually equivalent to~\Cref{con:Norm} with $\nu = \theta$.
Regardless, to avoid confusing with the general setting where~\Cref{con:Norm,con:NormTest2} are not equivalent, we choose to denote~\Cref{con:Norm,con:NormTest2} differently.

\subsection{\revised{Practical algorithm}}

\revised{In order to derive a\revisedBW{n SQP-type projected gradient algorithm of practical relevance} with adaptive sampling relying on \Cref{sub:non_convex_problems},} let us define
\begin{equation}
\label{eq:SQP_sample}
	d(x_k;\xi)
	=
	\argmin_{d\in \R^\vardim}~
	\langle \nabla f(x_k;\xi), d \rangle
	+
	\frac{1}{2}\lVert d \rVert^2
	\quad
	\text{subject to } \langle \nabla G(\varit), d\rangle + G(\varit) = 0
\end{equation}
and, accordingly, $R(x_k;\xi) = -d(x_k;\xi)$.
We then propose the following test which may be used to check \Cref{con:NormTest2}.
\smallskip
\begin{test}[Approximation of \Cref{con:NormTest2}]
\label{con:NormTest2_approx}
	Approximate control of the error in the reduced gradient by the norm of the reduced gradient:
	\begin{equation}
	\label{eq:NormTest2_inexact}
		\frac{1}{|S_k|-1}\frac{\sum_{\xi\in S_k}\|R(x_k;\xi) - R_{S_k}(x_k)\|^2}{|S_k|}
		\leq
		\theta^2 \|R_{S_k}(x_k)\|^2
		,
	\end{equation}
	for some fixed $\theta > 0$.
\end{test}
\smallskip

\begin{remark}
\Cref{con:NormTest2_approx} may appear undesirable because it involves computing individual reduced gradients $R(x_k;\xi)$ and, thus, repeated applications of the projection operator $P:\R^n \to C_k$.
This, however, it not a deep concern since a projection onto an affine subspace is usually very cheap to evaluate and can often be performed via sparse matrix operations \cite{trefethen1997numerical}.
\end{remark}

\begin{remark}
One could also propose other alternatives to \Cref{con:NormTest2}.
For instance, one could follow Bollapragada et al. \cite{Bollapragada2018a} and derive conditions which lead to a probabilistic threshold on $R_{S_k}(x_k)$ pointing in the same direction as $R(x_k)$.
For sake of space, we do not include any algorithms based on this approach.
The interested reader is referred to \cite{Bollapragada2018a,Urbainczyk2020} for further details on how such algorithms could be constructed.

\end{remark}

\paragraph{\revised{Feasibility}}
\revised{
Algorithms for constrained optimization problems need to verify both optimality and feasibility conditions.
Since $G$ is generally not an affine function, the proposed update steps $d_{S_k}$ \revised{alone} do not necessarily result in feasible iterates $x_k$.
Moreover, we assume that we do not have the ability to perform an exact projection \revisedBW{onto} the feasible set.
}

One way to proceed is to add a correction term $g_\its$ to each step proposal with the aim of approximating the constraint rather than satisfying it exactly \cite{rosen1961gradient}. 
This is the approach chosen in the KratosMultiphysics Shape Optimization Application \cite{antonau2022latest} \revised{and is reproduced here as part of \Cref{alg:adaptive_sampling_SQP}}.
The algorithm introduces a scaling parameter $\psi_\its$ for each iteration $\its$ which determines the magnitude of the correction term $g_\its$. 
Then, in an \textbf{if} statement, we determine whether the value of $G$ has changed its sign \revised{following the last update step}. 
If this is true, we are close to the \revised{level set $C$}. 
As a consequence, the parameter $\psi_\its$, and thus the magnitude of the correction term, are reduced. 
The \revised{subsequent} \textbf{else if} statement checks the opposite case, that is, whether we have drifted further from the constraint manifold during the two last \revised{step updates}. 
In that case, the correction scaling is increased to counteract this drifting. 
As a last step, the correction term is calculated as the scaled gradient of the constraint function $\nabla G(x_k)$. 
This correction is then deducted from the proposed update step $d_{S_\its}$ to steer the iterates closer to the feasible set.

\smallskip
\revised{\Cref{alg:adaptive_sampling_SQP} below adopts the SQP-inspired, projected gradient optimization step with an adaptive sample size relying on \Cref{con:NormTest2_approx} and the correction scheme described above in a practical manner.}

\begin{algorithm2e}[H]
\DontPrintSemicolon
	\caption{\label{alg:adaptive_sampling_SQP}Adaptive sampling algorithm for stochastic programs with functional equality constraints}
	\SetKwInOut{Input}{input}
	\Input{Feasible $\var_0 \in C$, step size $\alpha>0$, initial sample set $S_0$, constant $\theta > 0$, scaling parameter $\psi_0 > 0$.}
	Set $\its \leftarrow 0$\revised{, $g_0 \leftarrow 0^n\in\mathbb{R}^n$ and compute $d_{S_0}$}.\;
	\Repeat{\revised{a given} convergence test is satisfied}
	{
		\If{\Cref{con:NormTest2_approx} is satisfied}
		{
			Update $\varitnext = \varit + \alpha d_{S_k} \revised{ - \alpha \|d_{S_k}\| g_\its}$.\;
			Set $\its \leftarrow \its+1$.\;
			Construct $S_{k}$ satisfying $|S_{k}| = |S_{k-1}|$.\;
			\revised{
			\If{$G(x_\its)\:G(x_{\its-1}) < 0$}
			{Set $\psi_{\its+1} \leftarrow \frac{1}{2}\psi_\its \: .$}
			\ElseIf{$G(x_\its)\:G(x_{\its-1}) > 0$ \bf{and} $\|G(x_\its)\| > \|G(x_{\its-1})\|$}
			{Set $\psi_{\its+1} \leftarrow \min\:(2\psi_\its, 1) \: .$}
			Compute correction term $g_\its = \mathrm{sign}(G(x_\its)) \: \psi_{\its+1} \frac{\nabla G(x_\its)}{\|\nabla G(x_\its)\|} \: .$\;
			}
		}
		\Else
		{Set $|S_k| \leftarrow \lceil \rho^\prime\,|S_k|\rceil$, where
		$
			\rho^\prime
			=
			\dfrac{\sum_{\xi\in S_k}\|R(x_k;\xi) - R_{S_k}(x_k)\|^2}{\theta^2 \: (|S_k|-1) \: |S_k| \: \|R_{S_k}(x_k)\|^2} \:
			.
		$\;
		Obtain additional i.i.d. samples for $S_{k}$.\;
		}
		\revised{(Re-)compute $d_{S_\its}$.\;}
	}
\end{algorithm2e}

Note that \revised{in \Cref{alg:adaptive_sampling_SQP},} the sample set $S_k$ is updated \emph{before} progressing to the next iteration.
This is a desirable choice when collecting samples is more expensive than solving~\cref{eq:SQP} or~\cref{eq:SQP_sample}, which happens to be the case in the \revised{following} engineering application.%

\subsection{Shape optimization of shell structures}
\label{sub:shell}

Finally, we turn our attention towards a problem in engineering shape design.
\revised{Specifically, we consider} the design of a thin steel shell structure with physical model uncertainties.

It is well-accepted that shape optimization problems are difficult to characterize as well as solve and often involve significant engineering oversight \cite{Bletzinger2017}.
The intention in such problems is usually not to seek a globally optimal design, but instead to begin with an initial ``good'' design and find a nearby local optimum, which improves on a specified quantity of interest.
\revised{This is the setting in which we test the performance of \Cref{alg:adaptive_sampling_SQP}.}

\smallskip
\paragraph{\revised{Problem description}}
Our chosen example centers on the question of how to find the shape of a steel shell which minimizes some measurement of the internal strains resulting from a specified distribution of external loads.
For the initial shape, we choose a half-cylinder on its side, as depicted in \Cref{fig:shell_initial}.
We assume that an uncertain load $\bff$ will be applied to the shell structure from above and that the final manufactured thickness $t$ of the shell is also uncertain.
To simplify our implementation, we assume that every cross-section of the applied load follows a simple bell shape profile along the major axis of the shell and that the uncertainty in the load lies only in the position where it achieves its maximum.
More specifically, we model the applied load (measured in Newtons) by the vector field
\begin{align}
	\bff(x,y,z)
	=
	-10^5\exp\big( -( x - 6 + 4 a / 3 ) ^ 2 \big)\bfe_z
	,
	& \quad \text{with } a \sim \sfN(0,1),
	\quad
	\bfe_z = (0,0,1),
\end{align}
and all distances measured in units of meters (m); cf. \Cref{fig:shell_initial}.
Furthermore, we model the uncertain shell thickness by the uniformly distributed random variable
\begin{equation}
	t \sim \mathsf{Unif}(\SI{4.05}{\cm},\SI{5.05}{\cm})
	.
\end{equation}
It is of course possible to consider other uncertain model parameters, in addition to the thickness.
In this example, however, we choose to fix the mass density (\SI{7.85e3}{\kilogram\per\cubic\meter}), Young's modulus (\SI{2.069e11}{\pascal}), and Poisson's ratio (\SI{2.9e-1}{}) of the steel shell structure, judging them to be far less sensitive sources of uncertainty.

\begin{figure}
	\centering
	\begin{minipage}{0.55\textwidth}
		\centering 
		\includegraphics[clip=true, trim= 5cm 0cm 3cm 0cm, width=\textwidth]{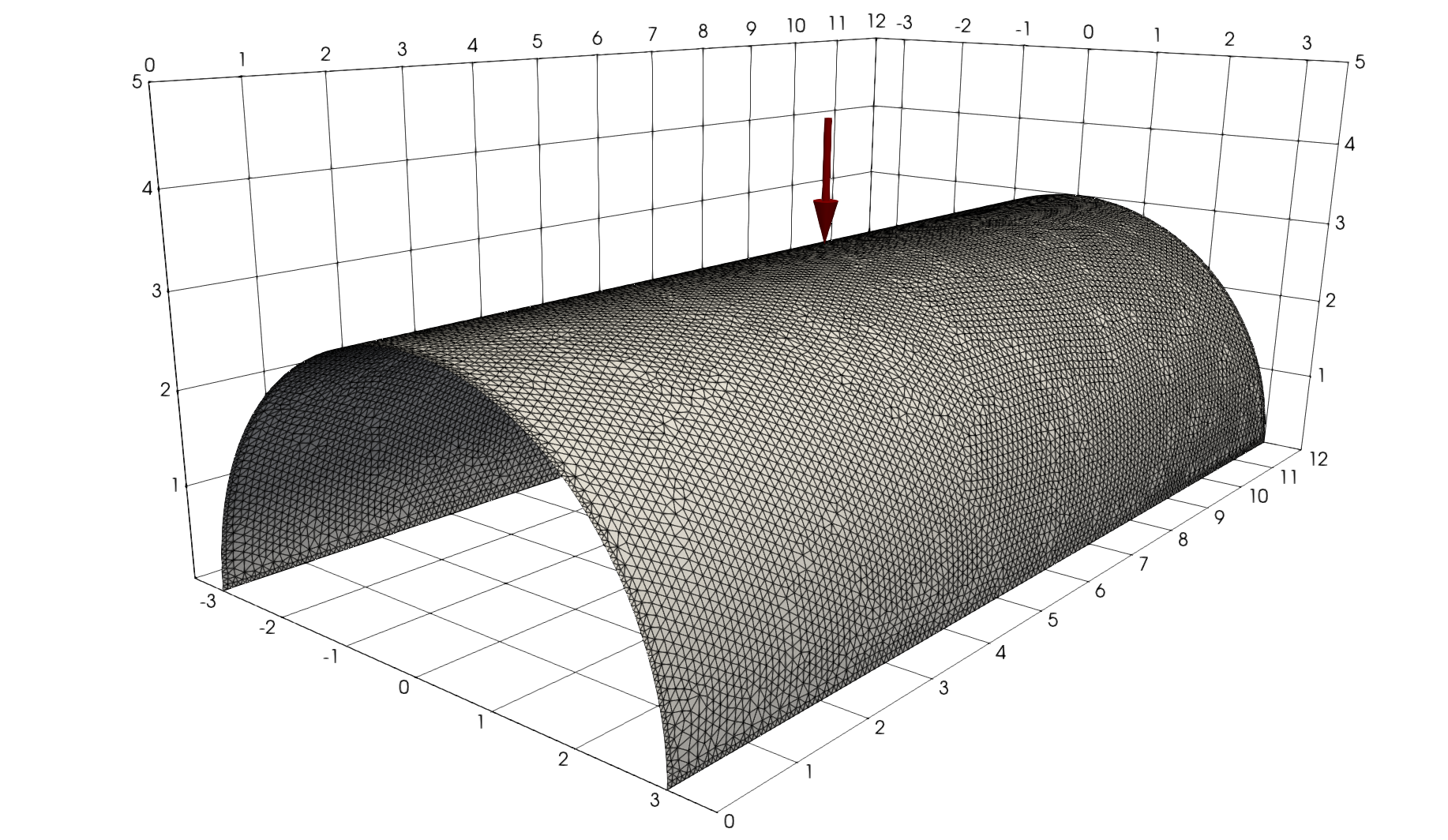}
	\end{minipage}%
	\quad
	\begin{minipage}{0.38\textwidth}
		\centering
		\begin{scaletikzpicturetowidth}{\textwidth}
			\begin{tikzpicture}[scale=\tikzscale,font=\large]
\definecolor{arrow_red}{rgb}{0.43, 0, 0}
\begin{axis}[
xlabel={$x$-axis coordinate (m)},
ylabel={Magnitude of load (N)},
]
\addplot[arrow_red, ultra thick, samples=100, domain=0:12] {10^5*exp(-((x-6)/1)^2)};
\legend{}; %
\end{axis}
\end{tikzpicture}
		\end{scaletikzpicturetowidth}
	\end{minipage}%
	\caption{\label{fig:shell_initial}Initial design of shell structure and applied force, $\bff$, when $a=0$.}
\end{figure}

Our goal here is to find a geometry parameterization $x$ which optimizes the shell's internal energy $\Pi(x) = \Pi(x;\bff,t)$, subject to the stochastic load $\bff$ and thickness $t$, given above.
In order to arrive at a realistic and practical optimum, we only look at a set of similarly expensive geometries \textemdash{} namely, those having (i) equal surface area \textemdash{} and physically reasonable geometries, wherein (ii) the supporting sides of the shell structure remain on the ground and (iii) the open open ends of the structure stay perpendicular to the ground.
These three sets of constraints lead to an abstract design space $C$ and an associated stochastic optimization problem, which may be compactly written as
\begin{equation}
	\min_{\var\in C}~ \Big\{ F(x) = \mcR[\Pi(x)] \Big\}
	,
	\label{eq:shell-opt-problem}
\end{equation}
where $\mcR$ is a given risk measure.
We only consider $\mcR = \CVaR^\varepsilon_{\beta}$, where $\beta \in [0,1)$ and $\varepsilon \geq 0$.

\smallskip
\paragraph{\revised{Experiment setup}}
It is common practice to represent the design geometry by the position of the nodes in its finite element representation \cite{Bletzinger2014}.
These nodes, in turn, serve as control variables $\var \in \R^n$, which may be updated along their physical normals at each step in the optimization algorithm \cite{Bletzinger2017}.
In this example, we follow the semi-analytical adjoint-based procedure outlined in \cite[Subsection~5.5.4]{Bletzinger2017} and implemented in the KratosMultiphysics Structural Mechanics and Shape Optimization Application \cite{Kratos2010}.
The geometry update rule we employ at the end of each optimization step $k$ uses sophisticated filtering techniques and mesh movement algorithms outlined in \cite{Bletzinger2014}.

Using node positions as our design variable allows us to implement constraints (ii) and (iii) quite simply.
By excluding the appropriate coordinates of the side and end nodes from design variable updates, we effectively fix them in the desired planes.
Constraint (i) is encoded via the computation of the geometry's surface area $A(x)$ by requiring that $A(x) = A(x_0)$, with $x_0$ denoting the initial design.
\revised{The starting design is thus feasible by definition.}
Gradients of $A$, which are required for computing the update step $d_{S_k}$ in \cref{eq:SQP}, were evaluated using an internal KratosMultiphysics \cite{Kratos2010} routine.
In practice, we add a correction term to each update step to account for the non-linearity of this constraint,
\revised{as described in \Cref{alg:adaptive_sampling_SQP}.}

Following \cite{Bletzinger2017}, for each independent pair of load and thickness realizations, $\bff_i$ and $t_i$, the calculation of the corresponding strain energy realization, $\Pi_i(x_k) = \Pi(x_k;\bff_i,t_i)$, together with its gradient, $\nabla \Pi_i(x_k)$, involves the discrete solution of two partial differential equations (PDEs).
In this study, we choose to represent the shell using the classical three-parameter Kirchhoff-Love PDE model and form a discretization of it with lowest-order $C^0$-continuous finite elements; cf. \cite{bischoff2018models}.
In particular, we use three-node ANDES elements \cite{felippa2003study} on the $(n=13783)$-node simplicial mesh depicted in \Cref{fig:shell_initial}.

We perform five numerical shape design optimization experiments with the discretization just described.
The first experiment uses $\beta = \varepsilon = 0$. Since for $\beta = 0$ the CVaR is identical with the expectation, we require no smoothing in this case.
The second, third, fourth, and fifth experiments use $\beta=0.5,~0.75,~0.8$, and $0.9$ respectively, each with $\varepsilon = 0.1$.
In the risk-neutral case (i.e., $\beta = 0$, $\mcR = \bbE$), we begin with an initial sample size of $|S_0| = 5$.
In each of the risk-averse cases (i.e., $\beta > 0$, $\mcR = \CVaR_\beta^\varepsilon$), we begin with $|S_0| = 10$ in order to be able to produce meaningful estimates of $t^\ast$ from the start. In the expectation case, the initial sample size is chosen smaller to exhibit a more notable increase, as the sample size barely surpasses 10 towards the last iterations.
In every experiment, we use the step size $\alpha = 0.1$ and the sampling rate parameter $\theta = 0.8$.

Each of the optimization problems is solved using the SQP approach introduced in \Cref{sub:non_convex_problems}.
For the risk-neutral problem, $\beta = 0$, we use \Cref{alg:adaptive_sampling_SQP}.
However, for the CVaR problems, $\beta > 0$, we modify the algorithm with the nested quantile estimation strategy described in \Cref{sub:alternative_algorithm}.
For further details, see \cite[Chapter~4.5]{Urbainczyk2020}.
In each experiment, the stochastic optimization algorithm is stopped after 50 iterations. 
Plots of the optimization logs are given in \Cref{fig:shell} and the final geometries are shown in \Cref{fig:shell-final}.
For visual comparison, we also present the final geometry one would find by optimizing the shell if it had exactly the expected thickness $t = \SI{5.00}{\cm}$, and exactly the expected stress $\bff(x,y,z) = -10^5\exp\big( -( x - 6 ) ^ 2 \big)\bfe_z$ was being applied (i.e., $a = \SI{0.0}{\m}$).
The interested reader may also consult \cite{Urbainczyk2020} for additional shape optimization experiments.

\begin{figure}
	\centering
	\begin{minipage}{0.3\textwidth}
		\centering
		\begin{scaletikzpicturetowidth}{\textwidth}
			\begin{tikzpicture}[scale=\tikzscale,font=\large]
\begin{axis}[
xlabel={Iteration},
ylabel={$\CVaR_{\beta}[\Pi(x_k)]$},
xmajorgrids,
ymajorgrids,
legend cell align={left},
ymode = log,
legend style={fill=white, fill opacity=0.6, draw opacity=1, text opacity=1},
legend pos=north east
]
\addplot[color1, ultra thick] table [x index={6}, y index={3}, col sep=comma, restrict x to domain=0:50] {./Results/ShellExample/expectation_norm.csv};
\addlegendentry{$\beta = 0.0$};
\addplot[color2, ultra thick] table [x index={7}, y index={4}, col sep=comma, restrict x to domain=0:50] {./Results/ShellExample/smooth_cvar_0.5_norm.csv};
\addlegendentry{$\beta = 0.5$};
\addplot[color3, ultra thick] table [x index={7}, y index={4}, col sep=comma, restrict x to domain=0:50] {./Results/ShellExample/smooth_cvar_0.75_norm_alt.csv};
\addlegendentry{$\beta = 0.75$};
\addplot[color4, ultra thick] table [x index={7}, y index={4}, col sep=comma, restrict x to domain=0:50] {./Results/ShellExample/smooth_cvar_0.8_norm.csv};
\addlegendentry{$\beta = 0.8$};
\addplot[color5, ultra thick] table [x index={7}, y index={4}, col sep=comma, restrict x to domain=0:50] {./Results/ShellExample/smooth_cvar_0.9_norm.csv};
\addlegendentry{$\beta = 0.9$};
\end{axis}
\end{tikzpicture}
		\end{scaletikzpicturetowidth}
	\end{minipage}%
	~
	\begin{minipage}{0.3\textwidth}
		\centering
		\begin{scaletikzpicturetowidth}{\textwidth}
			\begin{tikzpicture}[scale=\tikzscale,font=\large]
\begin{axis}[
xlabel={Gradient evaluations},
ylabel={$\CVaR_{\beta}[\Pi(x_k)]$},
xmajorgrids,
ymajorgrids,
legend cell align={left},
xmode = log,
ymode = log,
legend style={fill=white, fill opacity=0.6, draw opacity=1, text opacity=1},
legend pos=north east
]
\addplot[color1, ultra thick, select coords between index={0}{50}] table [x index={5}, y index={3}, col sep=comma] {./Results/ShellExample/expectation_norm.csv};
\addlegendentry{$\E$};
\addplot[color2, ultra thick, select coords between index={0}{50}] table [x index={6}, y index={4}, col sep=comma] {./Results/ShellExample/smooth_cvar_0.5_norm.csv};
\addlegendentry{$\CVaR_{0.5}$};
\addplot[color3, ultra thick, select coords between index={0}{50}] table [x index={6}, y index={4}, col sep=comma] {./Results/ShellExample/smooth_cvar_0.75_norm_alt.csv};
\addlegendentry{$\CVaR_{0.75}$};
\addplot[color4, ultra thick, select coords between index={0}{50}] table [x index={6}, y index={4}, col sep=comma] {./Results/ShellExample/smooth_cvar_0.8_norm.csv};
\addlegendentry{$\CVaR_{0.8}$};
\addplot[color5, ultra thick, select coords between index={0}{50}] table [x index={6}, y index={4}, col sep=comma] {./Results/ShellExample/smooth_cvar_0.9_norm.csv};
\addlegendentry{$\CVaR_{0.9}$};
\legend{}; %
\end{axis}
\end{tikzpicture}
		\end{scaletikzpicturetowidth}
	\end{minipage}%
	~
	\begin{minipage}{0.3\textwidth}
		\centering
		\begin{scaletikzpicturetowidth}{\textwidth}
			\begin{tikzpicture}[scale=\tikzscale,font=\large]
\begin{axis}[
xlabel={Iteration},
ylabel={Sample size},
xmajorgrids,
ymajorgrids,
legend cell align={left},
legend style={fill=white, fill opacity=0.6, draw opacity=1, text opacity=1},
legend style={at={(axis cs:50,15)},anchor=east},
ytick={0,10,20,30,40,50},
ymin = -5,
]
\addplot[color1, ultra thick] table [x index={6}, y index={2}, col sep=comma, restrict x to domain=0:50] {./Results/ShellExample/expectation_norm.csv};
\addlegendentry{$\E$};
\addplot[color2, ultra thick] table [x index={7}, y index={3}, col sep=comma, restrict x to domain=0:50] {./Results/ShellExample/smooth_cvar_0.5_norm.csv};
\addlegendentry{$\CVaR_{0.50}$};
\addplot[color3, ultra thick] table [x index={7}, y index={3}, col sep=comma, restrict x to domain=0:50] {./Results/ShellExample/smooth_cvar_0.75_norm_alt.csv};
\addlegendentry{$\CVaR_{0.75}$};
\addplot[color4, ultra thick] table [x index={7}, y index={3}, col sep=comma, restrict x to domain=0:50] {./Results/ShellExample/smooth_cvar_0.8_norm.csv};
\addlegendentry{$\CVaR_{0.8}$};
\addplot[color5, ultra thick] table [x index={7}, y index={3}, col sep=comma, restrict x to domain=0:50] {./Results/ShellExample/smooth_cvar_0.9_norm.csv};
\addlegendentry{$\CVaR_{0.9}$};
\legend{}; %
\end{axis}
\end{tikzpicture}
		\end{scaletikzpicturetowidth}
	\end{minipage}%
	\caption{
		Optimization logs for the design optimization problem of shell structures~\cref{eq:shell-opt-problem} with $\beta=0,~0.5,~0.75,~0.8$, and $0.9$.
		In all cases, the algorithm parameters $\alpha = 0.1$ and $\theta = 0.8$ were used.
		On the left, we see the convergence in the value of the objective function vs. the iteration number.
		In the middle, we see the convergence in the value of the objective function vs. the cumulative number of gradient evaluations.
		On the right, we see the growth in the sample size vs. iteration number.
		\label{fig:shell}}
\end{figure}

\smallskip
\paragraph{\revised{Results}}
In \Cref{fig:shell}, we see that the objective value in each stochastic setting decreases significantly throughout the course of optimization.
As usual, the more risk-averse the problem, the more samples are required.
In fact, in the risk-neutral setting, $\beta = 0$, just $10$ samples is easily enough to fulfill \Cref{con:NormTest2_approx} throughout nearly the entire course of the optimization.
This is the reason why the results we present for this experiment begin with fewer than $10$ samples.
However, using fewer than $10$ samples for the risk-averse experiments did not lead to predictable growth in the initial sample sizes.
This is likely because of a large error in estimating the corresponding quantiles using~\cref{eq:optimal_t} with very few samples.

\begin{figure}
	\centering
	\begin{subfigure}{0.48\textwidth}
		\includegraphics[clip=true, trim= 5cm 0cm 3cm 0cm, width=\textwidth]{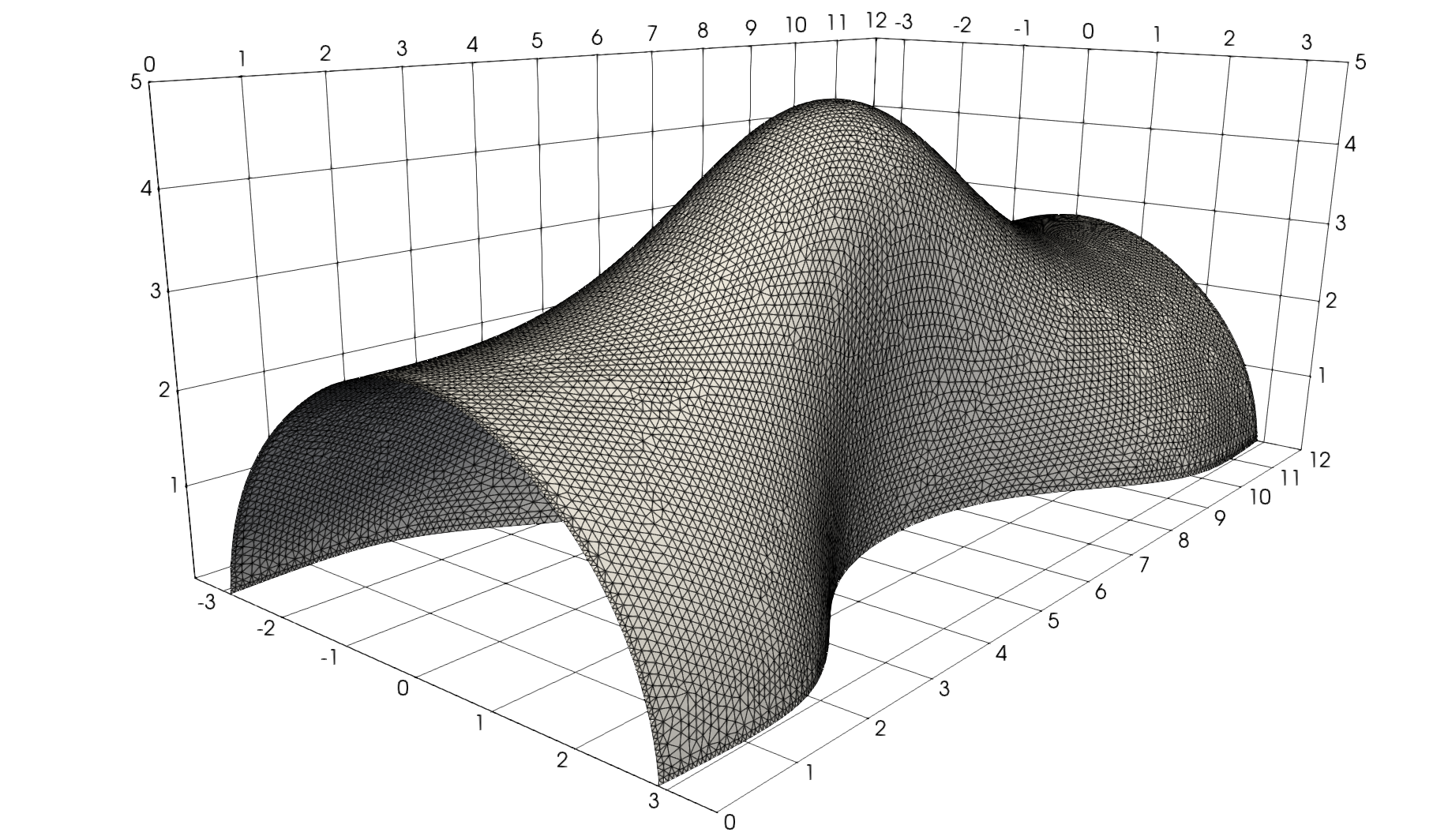}
		\caption{Final shell design when $a = \SI{0.0}{\m}$, $t = \SI{5.00}{\cm}$.}
		\label{fig:shell-deterministic}
	\end{subfigure}
	~
	\begin{subfigure}{0.48\textwidth}
		\includegraphics[clip=true, trim= 5cm 0cm 3cm 0cm, width=\textwidth]{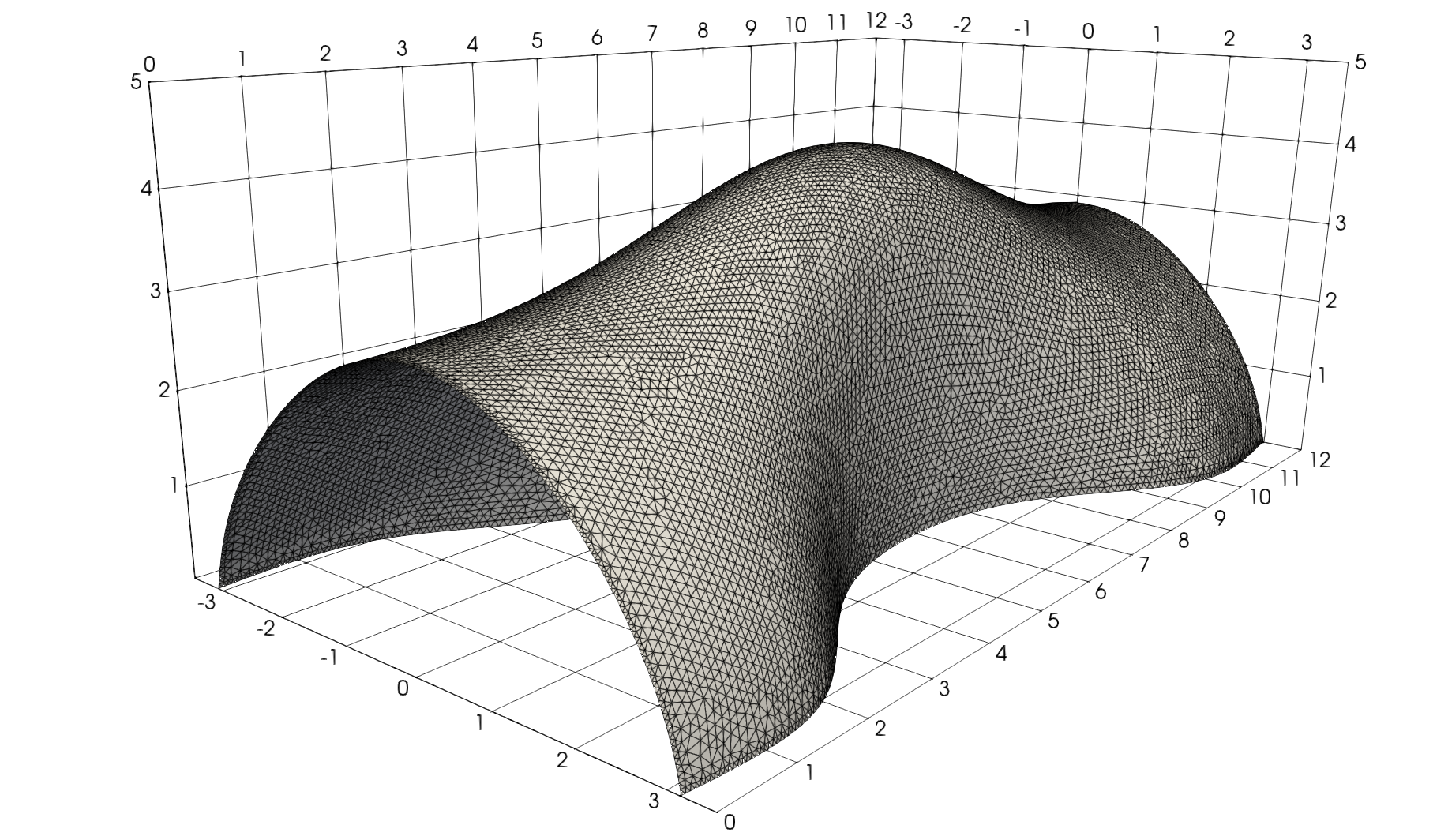}
		\caption{Final shell design when $\mcR = \bbE$}
		\label{fig:shell-E}
	\end{subfigure}
	\\
	\begin{subfigure}{0.48\textwidth}
		\includegraphics[clip=true, trim= 5cm 0cm 3cm 0cm, width=\textwidth]{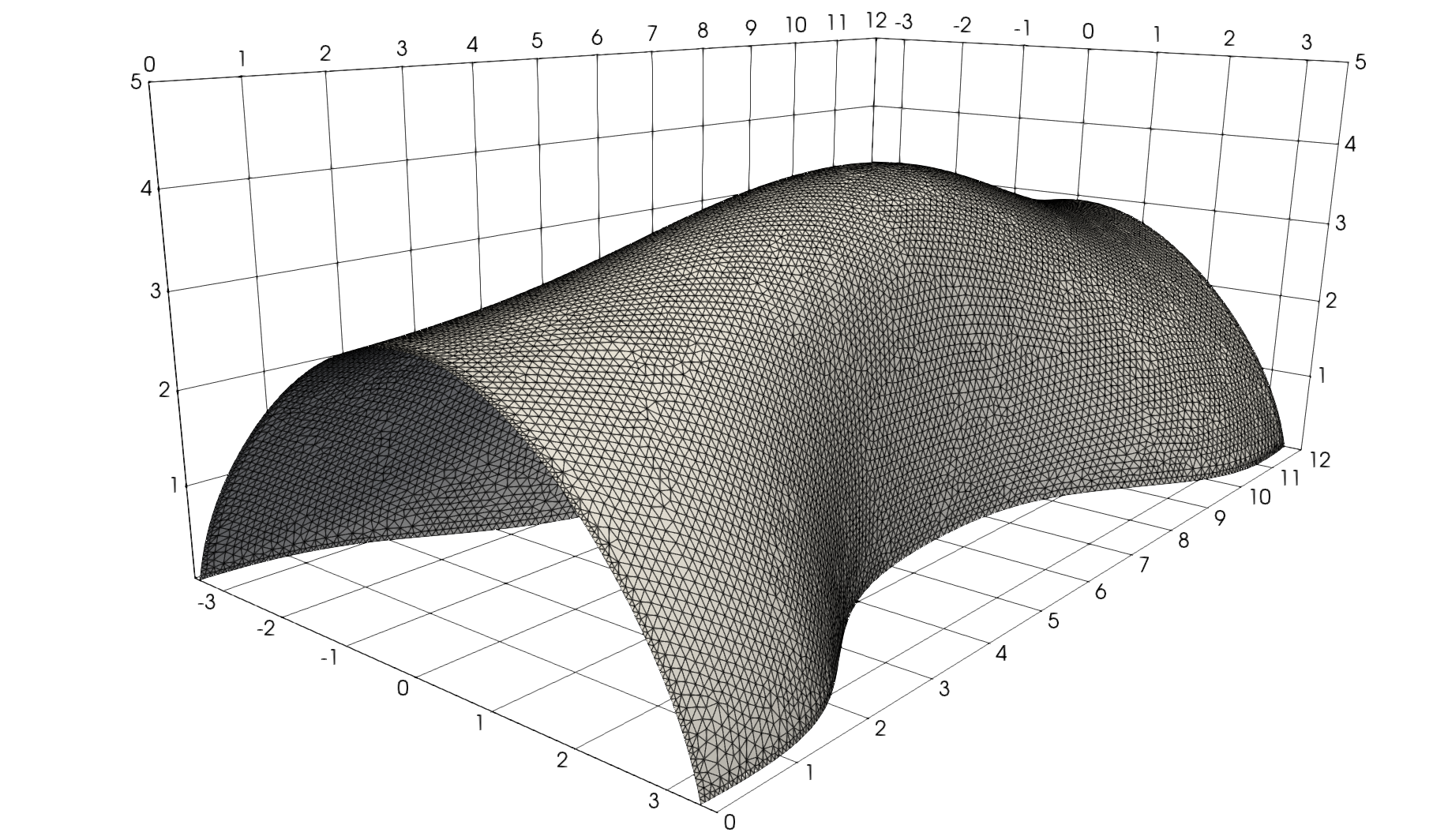}
		\caption{Final shell design when $\mcR = \CVaR_{0.5}$}
		\label{fig:shell-CVaR050}
	\end{subfigure}
	~
	\begin{subfigure}{0.48\textwidth}
		\includegraphics[clip=true, trim= 5cm 0cm 3cm 0cm, width=\textwidth]{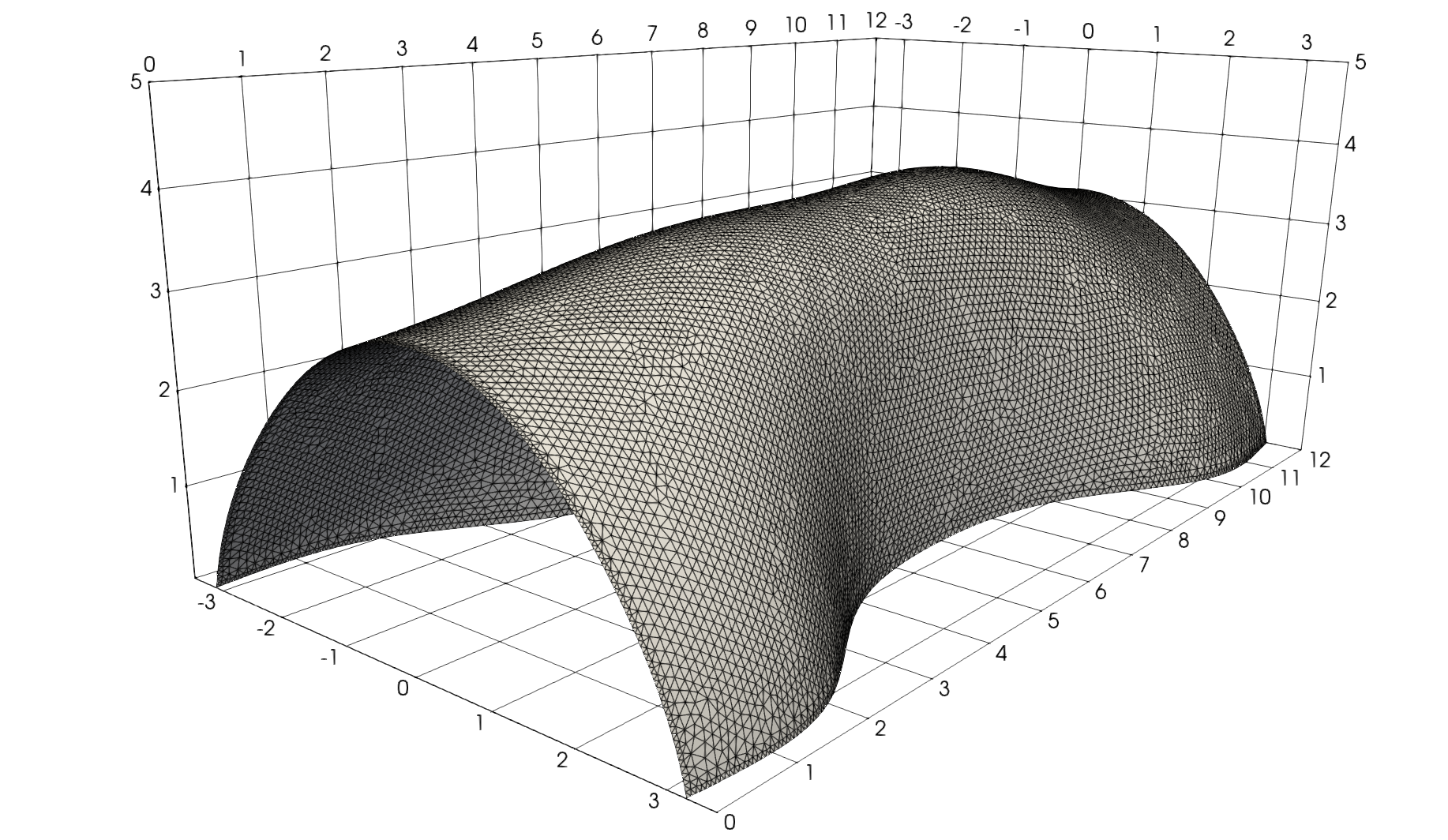}
		\caption{Final shell design when $\mcR = \CVaR_{0.75}$}
		\label{fig:shell-CVaR075}
	\end{subfigure}
	\\
	\begin{subfigure}{0.48\textwidth}
		\includegraphics[clip=true, trim= 5cm 0cm 3cm 0cm, width=\textwidth]{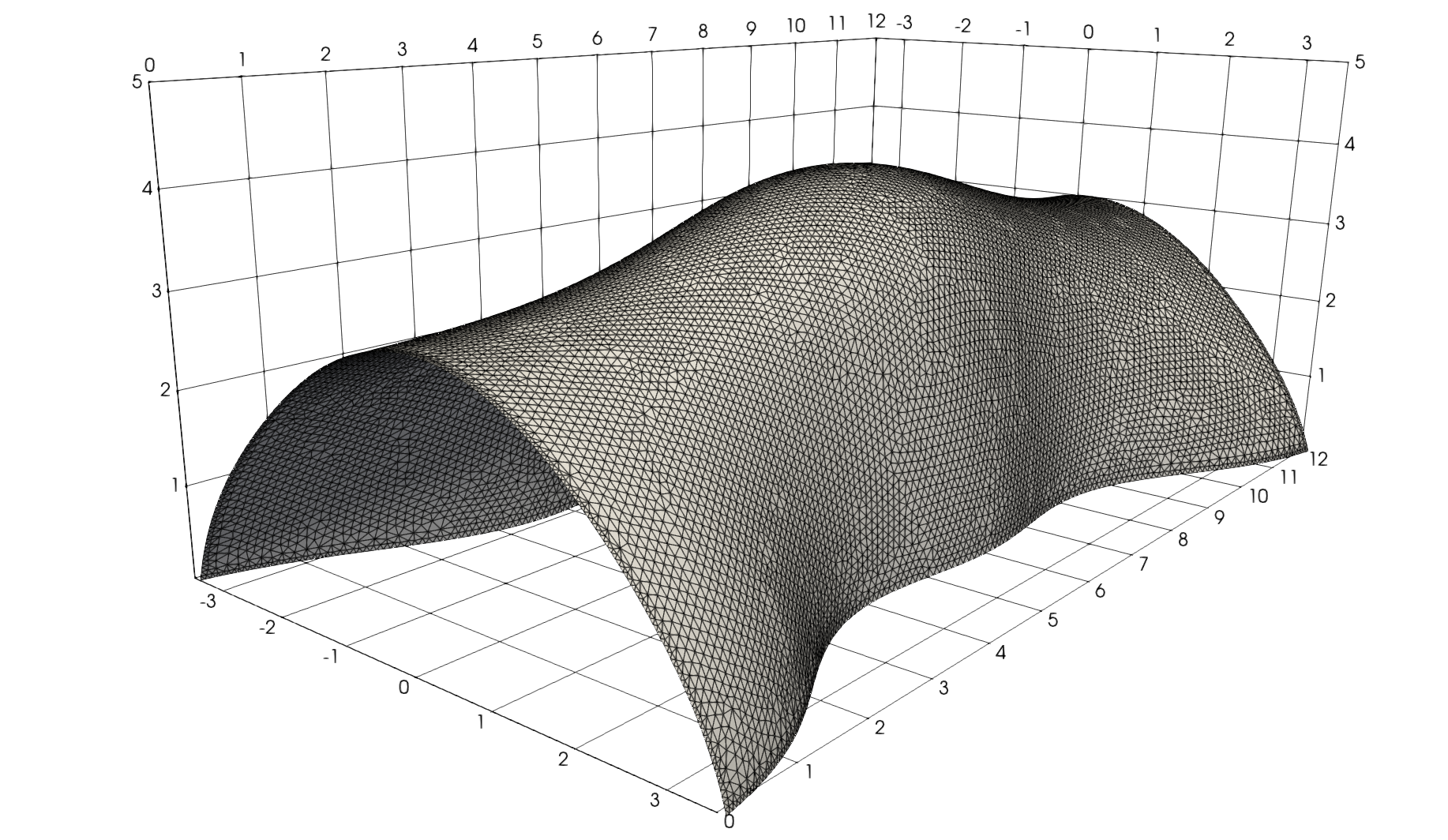}
		\caption{Final shell design when $\mcR = \CVaR_{0.8}$}
		\label{fig:shell-CVaR080}
	\end{subfigure}
	~
	\begin{subfigure}{0.48\textwidth}
		\includegraphics[clip=true, trim= 5cm 0cm 3cm 0cm, width=\textwidth]{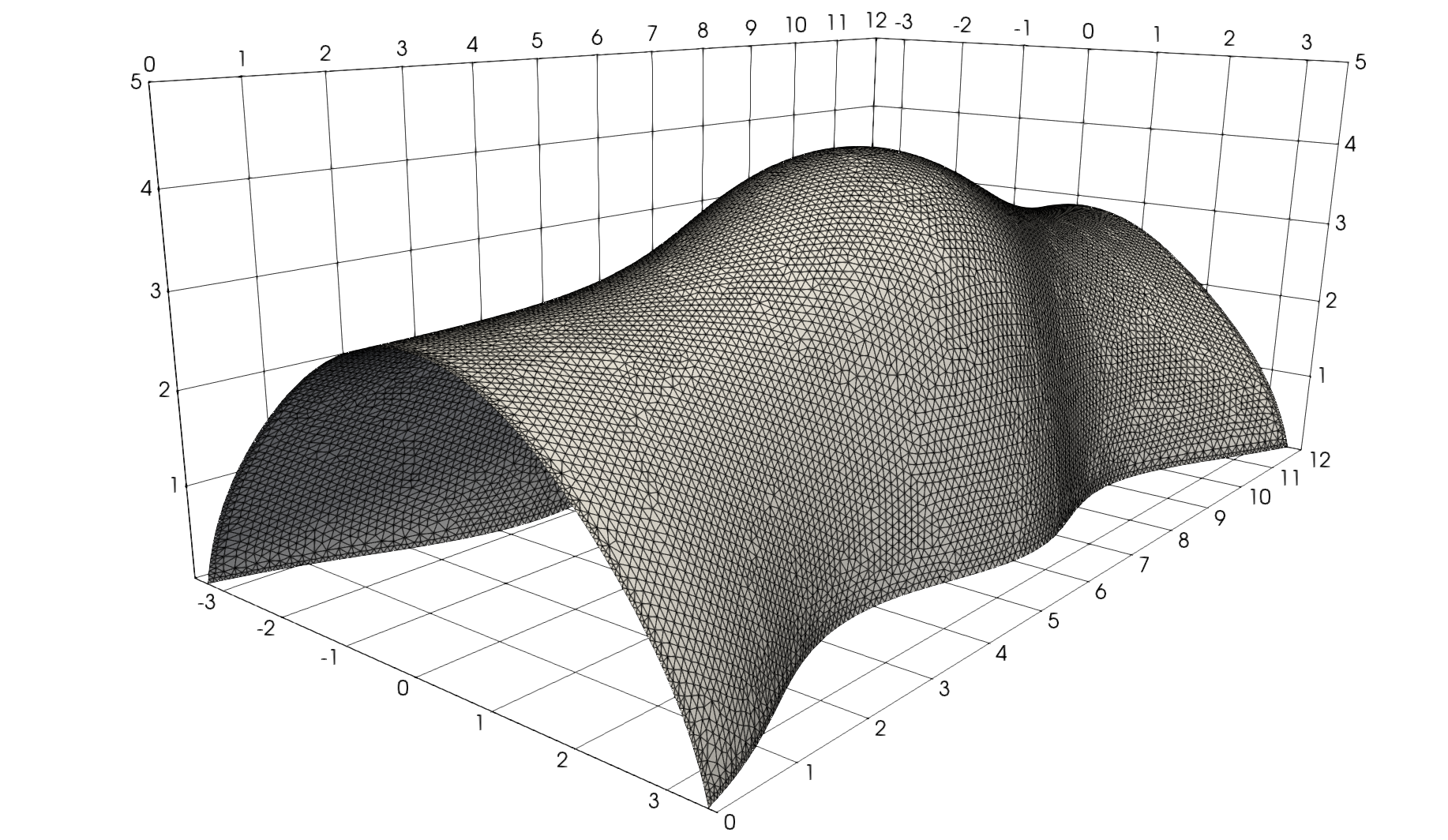}
		\caption{Final shell design when $\mcR = \CVaR_{0.9}$}
		\label{fig:shell-CVaR090}
	\end{subfigure}
	\caption{Final shell designs for deterministic, risk-neutral, and risk-averse optimization problems.\label{fig:shell-final}}
\end{figure}

\revised{It remains to investigate} the performance of \revised{the correction term $g_\its$ introduced in \Cref{alg:adaptive_sampling_SQP} to enforce the equality constraint in} the numerical examples presented \revised{above}. In \Cref{fig:constraint}, the geometry's surface area is shown for the expectation case, that is, $\mathcal{R} = \mathbb{E}$. The values of $A(x_\its)$ are normalized by the initial surface area $A(x_0)$ and scaled by 100\% so that, in an ideal scenario, we would observe a constant value of 100\% as is indicated by the dotted line.
In practice, one notes that we do not satisfy the constraint well after the first step. This is due to the fact that no correction term is computed for this step, as our correction procedure relies on two previous values. However, as the optimization progresses, the normalized surface area stabilizes around 100\% as desired. Similar results were achieved for every other choice of risk measure \revised{considered in the numerical examples of this section}.

\begin{figure}
	\centering
	\begin{minipage}{0.3\textwidth}
		\centering
		\begin{scaletikzpicturetowidth}{\textwidth}
			\begin{tikzpicture}[scale=\tikzscale,font=\large]
\begin{axis}[
xlabel={Iteration},
ylabel={$A(x_k) / A(x_0) \cdot 100\%$},
xmajorgrids,
ymajorgrids,
legend cell align={left},
legend style={fill=white, fill opacity=0.6, draw opacity=1, text opacity=1},
legend style={at={(axis cs:50,15)},anchor=east},
ytick={98.5, 98.75, 99, 99.25, 99.5, 99.75, 100, 100.25, 100.5, 100.75, 101},
]
\addplot[mark=none, black, ultra thick, dotted] coordinates {(0,100) (50,100)};
\addplot[color1, ultra thick] table [x index={0}, y index={2}, col sep=comma, restrict x to domain=0:50] {./Results/ShellConstraint/percentage_expectation.csv};
\addlegendentry{$\E$};
\legend{}; %
\end{axis}
\end{tikzpicture}
		\end{scaletikzpicturetowidth}
	\end{minipage}%
	\caption{Normalized surface area in the \revised{shape optimization} experiment for $\mathcal{R} = \mathbb{E}$. The values were normalized by the initial geometry's surface area and are recorded as percentages.}
	\label{fig:constraint}
\end{figure}

It is interesting to note that the various optimization problems deliver visually distinct optimal shapes.
The risk-neutral design in \Cref{fig:shell-E} is better-suited to the more likely but less damaging loads centered on the middle of the structure.
This is most evident when it is compared to the optimal design in \Cref{fig:shell-deterministic}, which comes from the deterministic scenario where the load is centered on middle of the structure with absolute certainty.
On the other hand, the risk-averse designs \Cref{fig:shell-CVaR050,fig:shell-CVaR075,fig:shell-CVaR080,fig:shell-CVaR090} are progressively better suited to the less likely, but more damaging, loads centered near the ends of the structure.
Initially, as $\beta$ grows, we witness progressively flatter bumps in the center of the structure until a new type of shape appears somewhere around $\beta = 0.75$.
This indicates a change in the local minima landscape.
Likely there are two nearby local minima in this region, one that is a continuation of the risk-neutral minimum and a second local minimum influenced by the extreme values of the stress $\Pi(x)$.
Additional numerical experiments (not included here) indicate to us that the basin of attraction of the new state exhibited in \Cref{fig:shell-CVaR080,fig:shell-CVaR090} grows with $\beta > 0.75$.
At the same time, the basin of attraction of the original risk-neutral category of designs appears to shrink and eventually vanish.

The experiments presented in this subsection highlight one important practical advantage of our adaptive sampling approach over fixed sample size SPGD.
For many engineering applications, gradient evaluations form the majority of the total optimization cost.
This is why we aim to reduce the amount of gradient computations.
In the most risk-averse setting we considered, i.e., $\beta = 0.9$, we observe that the adaptive sample size reaches $|S_k| = 43$ at the final iteration, with $1600$ gradient evaluations in total.
Had we used this sample size from the start, we would have had to evaluate the gradient $2150$ times.
Therefore, in comparison, our adaptive sampling strategy decreased the total number of evaluations by $\sim25\%$.
In other examples, the savings were even greater.
Indeed, when $\beta=0.75$, adaptive sampling required $\sim50\%$ fewer gradient computations than the fixed sample size approach.

\section{Conclusion} %
\label{sec:conclusion}

This paper deals with stochastic optimization algorithms with dynamic sample sizes.
We focus on a large class of stochastic programs with deterministic constraints.
In doing so, we pose sufficient conditions on the sample sizes which guarantee \revised{in the case of convex constrains} that a class of adaptive sampling methods converge.
\revised{We then introduce practical tests to check these conditions and demonstrate their efficiency in a set of numerical examples.}
Our methods not only apply to risk-neutral optimization problems; that is, when the objective function is the expected value of some stochastic quantity.
Indeed, using the conditional value-at-risk (CVaR) as a working example, we show how a large family of risk-averse problems can be treated with the strategies developed here.
\revised{Lastly, we propose an extension for more general non-convex \revisedBW{constraints} and illustrate its robustness in a contemporary application in engineering design with loading and model uncertainties.}

\section*{Acknowledgments}
The authors are grateful for advice and suggestions from the anonymous referees as well as Chandrajit Bajaj and Thomas Surowiec.
The authors acknowledge the computer resources on MareNostrum and Salomon provided by the Barcelona Supercomputing Center (IM-2019-2-0008) and the Czech Republic Ministry of Education, Youth and Sports (from the Large Infrastructures for Research, Experimental Development, and Innovations project ``e-INFRA CZ -- LM2018140''), respectively.
This work was performed under the auspices of the U.S. Department of Energy by Lawrence Livermore National Laboratory under Contract DE-AC52-07NA27344, LLNL-JRNL-827097.

\appendix
\section{More numerical examples} %
\label{appendix:numerical_examples}
To complement the numerical experiments in  \Cref{sec:applications}, we present two short parameter studies. \revised{These illustrate the influence of the choice of step size and sampling rate on the optimization results presented earlier}. 

\subsection{Basic example: The step size}
\label{app:stepsize}

The analysis in \Cref{sec:adaptive_sampling} suggests that we choose a fixed step size $\alpha$ that satisfies~\cref{eq:StepSizeConditionStronglyConvex}. 
\revised{For the example set-up in \Cref{subsec:basic_example}, we can estimate the Lipschitz constant $L \leq 40$ and for given $a_l,~l=1,\dots,20$ calculate also a sharper constant.}
There, we have presented numerical results with $\alpha=0.025$ \revised{corresponding to the rough estimate on $L$.}
In \Cref{fig:sensitivity_stepsize}, we present further experiments to demonstrate how alternative stepsizes influence the results.

\begin{figure}
	\centering
	\begin{minipage}{0.3\textwidth}
		\centering
		\begin{scaletikzpicturetowidth}{\textwidth}
			\begin{tikzpicture}[scale=\tikzscale,font=\large]
\begin{axis}[
xlabel={Iteration},
ylabel={$\lVert x^* - x_k \rVert$},
xmajorgrids,
ymajorgrids,
ymode = log,
legend pos=north east,
legend style={fill=white, fill opacity=0.6, draw opacity=1, text opacity=1, font=\normalsize}
]

\addplot[color3, ultra thick] table [x index={0}, y index={3}, col sep=comma] {./Results/Appendix/statistics_alpha0010.csv};
\addlegendentry{$\alpha=0.010$};

\addplot[color5, ultra thick, restrict x to domain=0:150] table [x index={0}, y index={3}, col sep=comma] {./Results/Appendix/statistics_alpha0025.csv};
\addlegendentry{$\alpha=0.025$};

\addplot[color4, ultra thick, restrict x to domain=0:150] table [x index={0}, y index={3}, col sep=comma] {./Results/Appendix/statistics_alpha0050.csv};
\addlegendentry{$\alpha=0.050$};

\end{axis}
\end{tikzpicture}
		\end{scaletikzpicturetowidth}
	\end{minipage}%
	~
	\begin{minipage}{0.3\textwidth}
		\centering
		\begin{scaletikzpicturetowidth}{\textwidth}
			\begin{tikzpicture}[scale=\tikzscale,font=\large]
\begin{semilogyaxis}[
xlabel={Iteration},
ylabel={Sample size},
xmajorgrids,
ymajorgrids,
ymode = log,
legend style={fill=white, fill opacity=0.6, draw opacity=1, text opacity=1},
legend pos=north west
]

\addplot[color3, ultra thick, restrict x to domain=0:150] table [x index={0}, y index={1}, col sep=comma] {./Results/Appendix/statistics_alpha0010.csv};
\addlegendentry{$\alpha=0.010$};

\addplot[color5, ultra thick, restrict x to domain=0:150] table [x index={0}, y index={1}, col sep=comma] {./Results/Appendix/statistics_alpha0025.csv};
\addlegendentry{$\alpha=0.025$};

\addplot[color4, ultra thick, restrict x to domain=0:150] table [x index={0}, y index={1}, col sep=comma] {./Results/Appendix/statistics_alpha0050.csv};
\addlegendentry{$\alpha=0.050$};

\legend{}; %
\end{semilogyaxis}
\end{tikzpicture}
		\end{scaletikzpicturetowidth}
	\end{minipage}%
	~
	\begin{minipage}{0.3\textwidth}
		\centering
		\begin{scaletikzpicturetowidth}{\textwidth}
			\begin{tikzpicture}[scale=\tikzscale,font=\large]
\begin{axis}[
xlabel={Gradient evaluations},
ylabel={$F(x_k)-F^*$},
xmajorgrids,
ymajorgrids,
xmode = log,
ymode = log,
legend style={fill=white, fill opacity=0.6, draw opacity=1, text opacity=1},
legend pos=north east,
]

\addplot[color3, ultra thick, restrict x to domain=0:150] table [x index={2}, y index={6}, col sep=comma] {./Results/Appendix/statistics_alpha0010.csv};
\addlegendentry{$\alpha=0.010$};

\addplot[color5, ultra thick, restrict x to domain=0:150] table [x index={2}, y index={6}, col sep=comma] {./Results/Appendix/statistics_alpha0025.csv};
\addlegendentry{$\alpha=0.025$};

\addplot[color4, ultra thick, restrict x to domain=0:150] table [x index={2}, y index={6}, col sep=comma] {./Results/Appendix/statistics_alpha0050.csv};
\addlegendentry{$\alpha=0.050$};

\legend{}; %
\end{axis}
\end{tikzpicture}

		\end{scaletikzpicturetowidth}
	\end{minipage}%
	\caption{Numerical results for step sizes $\alpha=0.010,0.025,0.050$ while all other paramethers are kept fixed as in \Cref{subsec:basic_example} with $\theta=0.5$.\label{fig:sensitivity_stepsize}}
\end{figure}

Qualitatively, q-linear convergence is observed for each stepsize in \Cref{fig:sensitivity_stepsize}.
Nevertheless, the largest stepsize ($\alpha = 0.050$) results in both the most iteration-efficient and sample-efficient choice.
Although \Cref{alg:practical} is used here, these results are in line with the analysis in \Cref{thm:StronglyConvex}.

\subsection{Portfolio optimization: The sampling rate}
\label{app:sampling_rate}

In \Cref{ssub:risk_averse_portfolio_optimization}, we have analyzed different choices of the risk-averseness parameter $\beta$. 
Distinct choices of $\beta$ change the optimization problem and, therefore, we used a different adaptive sampling rate $\theta$ for each $\beta$.
We now study the influence of $\theta$ for three different fixed values of $\beta$; namely, $\beta = 0,0.5,0.9$.
Since \Cref{alg:alternative_cvar} performed better than \Cref{alg:practical} in \Cref{subsec:operations_research}, we will focus our investigation on that particular algorithm when $\beta>0$.
The remaining problem set-up is identical to \Cref{ssub:risk_averse_portfolio_optimization}. 

\begin{figure}
	\centering
	\begin{minipage}{0.3\textwidth}
		\centering
		\begin{scaletikzpicturetowidth}{\textwidth}
			\begin{tikzpicture}[scale=\tikzscale,font=\large]
\begin{axis}[
xlabel={Iteration},
ylabel={$F_\beta(x_k,t)$},
xmajorgrids,
ymajorgrids,
legend style={fill=white, fill opacity=0.9, draw opacity=1, text opacity=1},
legend pos=north east,
ymin=-1.225,
ymax=-1.05
]

\addplot[color2, ultra thick] table [x index={0}, y index={3}, col sep=comma,] {./Results/Appendix/statistics_000_theta10.csv};
\addlegendentry{$\theta = 1.0$};

\addplot[color3, ultra thick] table [x index={0}, y index={3}, col sep=comma] {./Results/Appendix/statistics_000_theta30.csv};
\addlegendentry{$\theta = 3.0$};

\addplot[color8, ultra thick] table [x index={0}, y index={3}, col sep=comma] {./Results/Appendix/statistics_000_theta50.csv};
\addlegendentry{$\theta = 5.0$};

\end{axis}
\end{tikzpicture}
		\end{scaletikzpicturetowidth}
	\end{minipage}%
	~
	\begin{minipage}{0.3\textwidth}
		\centering
		\begin{scaletikzpicturetowidth}{\textwidth}
			\begin{tikzpicture}[scale=\tikzscale,font=\large]
\begin{axis}[
xlabel={Gradient evaluations},
ylabel={$F_\beta(x_k,t)$},
xmajorgrids,
ymajorgrids,
xmode = log,
legend style={fill=white, fill opacity=0.9, draw opacity=1, text opacity=1, font=\small},
legend pos=north east,
ymin=-1.225,
ymax=-1.05
]

\addplot[color2, ultra thick] table [x index={2}, y index={3}, col sep=comma,] {./Results/Appendix/statistics_000_theta10.csv};
\addlegendentry{$\theta = 1.0$};

\addplot[color3, ultra thick] table [x index={2}, y index={3}, col sep=comma] {./Results/Appendix/statistics_000_theta30.csv};
\addlegendentry{$\theta = 3.0$};

\addplot[color8, ultra thick] table [x index={2}, y index={3}, col sep=comma] {./Results/Appendix/statistics_000_theta50.csv};
\addlegendentry{$\theta = 5.0$};

\legend{}; %
\end{axis}
\end{tikzpicture}
		\end{scaletikzpicturetowidth}
	\end{minipage}%
	~
	\begin{minipage}{0.3\textwidth}
		\centering
		\begin{scaletikzpicturetowidth}{\textwidth}
			\begin{tikzpicture}[scale=\tikzscale,font=\large]
\begin{axis}[
xlabel={Iteration},
ylabel={Sample size},
xmajorgrids,
ymajorgrids,
ymode = log,
legend style={fill=white, fill opacity=0.6, draw opacity=1, text opacity=1},
legend pos=north west,
ymax=15000000
]

\addplot[color2, ultra thick] table [x index={0}, y index={1}, col sep=comma,] {./Results/Appendix/statistics_000_theta10.csv};
\addlegendentry{$\theta = 1.0$};

\addplot[color3, ultra thick] table [x index={0}, y index={1}, col sep=comma] {./Results/Appendix/statistics_000_theta30.csv};
\addlegendentry{$\theta = 3.0$};

\addplot[color8, ultra thick] table [x index={0}, y index={1}, col sep=comma] {./Results/Appendix/statistics_000_theta50.csv};
\addlegendentry{$\theta = 5.0$};

\legend{}; %
\end{axis}
\end{tikzpicture}
		\end{scaletikzpicturetowidth}
	\end{minipage}%
	\caption{Numerical results for the portfolio optimization problem~\cref{eq:finance-E} using \Cref{alg:practical} for the sampling rates $\theta=1.0,3.0,5.0$. Note that this problem is equivalent to~\cref{eq:finance-CVaR} with $\beta=0.0$. \label{fig:sensitivity000}}
\end{figure}

\begin{figure}
	\centering
	\begin{minipage}{0.3\textwidth}
		\centering
		\begin{scaletikzpicturetowidth}{\textwidth}
			\begin{tikzpicture}[scale=\tikzscale,font=\large]
\begin{axis}[
xlabel={Iteration},
ylabel={$F_\beta(x_k,t)$},
xmajorgrids,
ymajorgrids,
legend style={fill=white, fill opacity=0.9, draw opacity=1, text opacity=1},
legend pos=north east,
ymin=-0.85,
ymax=-0.5
]

\addplot[color2, ultra thick] table [x index={0}, y index={3}, col sep=comma,] {./Results/Appendix/statistics_050_theta10_eps001.csv};
\addlegendentry{$\theta = 1.0$};

\addplot[color3, ultra thick] table [x index={0}, y index={3}, col sep=comma] {./Results/Appendix/statistics_050_theta30_eps001.csv};
\addlegendentry{$\theta = 3.0$};

\addplot[color8, ultra thick] table [x index={0}, y index={3}, col sep=comma] {./Results/Appendix/statistics_050_theta50_eps001.csv};
\addlegendentry{$\theta = 5.0$};

\end{axis}
\end{tikzpicture}
		\end{scaletikzpicturetowidth}
	\end{minipage}%
	~
	\begin{minipage}{0.3\textwidth}
		\centering
		\begin{scaletikzpicturetowidth}{\textwidth}
			\begin{tikzpicture}[scale=\tikzscale,font=\large]
\begin{axis}[
xlabel={Gradient evaluations},
ylabel={$F_\beta(x_k,t)$},
xmajorgrids,
ymajorgrids,
xmode = log,
legend style={fill=white, fill opacity=0.9, draw opacity=1, text opacity=1, font=\small},
legend pos=north east,
ymin=-0.85,
ymax=-0.5
]

\addplot[color2, ultra thick] table [x index={2}, y index={3}, col sep=comma,] {./Results/Appendix/statistics_050_theta10_eps001.csv};
\addlegendentry{$\theta = 1.0$};

\addplot[color3, ultra thick] table [x index={2}, y index={3}, col sep=comma] {./Results/Appendix/statistics_050_theta30_eps001.csv};
\addlegendentry{$\theta = 3.0$};

\addplot[color8, ultra thick] table [x index={2}, y index={3}, col sep=comma] {./Results/Appendix/statistics_050_theta50_eps001.csv};
\addlegendentry{$\theta = 5.0$};

\legend{}; %
\end{axis}
\end{tikzpicture}
		\end{scaletikzpicturetowidth}
	\end{minipage}%
	~
	\begin{minipage}{0.3\textwidth}
		\centering
		\begin{scaletikzpicturetowidth}{\textwidth}
			\begin{tikzpicture}[scale=\tikzscale,font=\large]
\begin{axis}[
xlabel={Iteration},
ylabel={Sample size},
xmajorgrids,
ymajorgrids,
ymode = log,
legend style={fill=white, fill opacity=0.6, draw opacity=1, text opacity=1},
legend pos=north west,
ymax=15000000
]

\addplot[color2, ultra thick] table [x index={0}, y index={1}, col sep=comma,] {./Results/Appendix/statistics_050_theta10_eps001.csv};
\addlegendentry{$\theta = 1.0$};

\addplot[color3, ultra thick] table [x index={0}, y index={1}, col sep=comma] {./Results/Appendix/statistics_050_theta30_eps001.csv};
\addlegendentry{$\theta = 3.0$};

\addplot[color8, ultra thick] table [x index={0}, y index={1}, col sep=comma] {./Results/Appendix/statistics_050_theta50_eps001.csv};
\addlegendentry{$\theta = 5.0$};

\legend{}; %
\end{axis}
\end{tikzpicture}
		\end{scaletikzpicturetowidth}
	\end{minipage}%
	\caption{Numerical results for the portfolio optimization problem~\cref{eq:finance-CVaR} with $\beta=0.5$ using \Cref{alg:alternative_cvar} for the sampling rates $\theta=1.0,3.0,5.0$. \label{fig:sensitivity050}}
\end{figure}

\begin{figure}
	\centering
	\begin{minipage}{0.3\textwidth}
		\centering
		\begin{scaletikzpicturetowidth}{\textwidth}
			\begin{tikzpicture}[scale=\tikzscale,font=\large]
\begin{axis}[
xlabel={Iteration},
ylabel={$F_\beta(x_k,t)$},
xmajorgrids,
ymajorgrids,
legend style={fill=white, fill opacity=0.9, draw opacity=1, text opacity=1},
legend pos=north east,
ymin=-0.4,
ymax=0.0
]

\addplot[color2, ultra thick] table [x index={0}, y index={3}, col sep=comma,] {./Results/Appendix/statistics_090_theta10_eps001.csv};
\addlegendentry{$\theta = 1.0$};

\addplot[color3, ultra thick] table [x index={0}, y index={3}, col sep=comma] {./Results/Appendix/statistics_090_theta30_eps001.csv};
\addlegendentry{$\theta = 3.0$};

\addplot[color8, ultra thick] table [x index={0}, y index={3}, col sep=comma] {./Results/Appendix/statistics_090_theta50_eps001.csv};
\addlegendentry{$\theta = 5.0$};

\end{axis}
\end{tikzpicture}
		\end{scaletikzpicturetowidth}
	\end{minipage}%
	~
	\begin{minipage}{0.3\textwidth}
		\centering
		\begin{scaletikzpicturetowidth}{\textwidth}
			\begin{tikzpicture}[scale=\tikzscale,font=\large]
\begin{axis}[
xlabel={Gradient evaluations},
ylabel={$F_\beta(x_k,t)$},
xmajorgrids,
ymajorgrids,
xmode = log,
legend style={fill=white, fill opacity=0.9, draw opacity=1, text opacity=1, font=\small},
legend pos=north east,
ymin=-0.4,
ymax=0.0
]

\addplot[color2, ultra thick] table [x index={2}, y index={3}, col sep=comma,] {./Results/Appendix/statistics_090_theta10_eps001.csv};
\addlegendentry{$\theta = 1.0$};

\addplot[color3, ultra thick] table [x index={2}, y index={3}, col sep=comma] {./Results/Appendix/statistics_090_theta30_eps001.csv};
\addlegendentry{$\theta = 3.0$};

\addplot[color8, ultra thick] table [x index={2}, y index={3}, col sep=comma] {./Results/Appendix/statistics_090_theta50_eps001.csv};
\addlegendentry{$\theta = 5.0$};

\legend{}; %
\end{axis}
\end{tikzpicture}
		\end{scaletikzpicturetowidth}
	\end{minipage}%
	~
	\begin{minipage}{0.3\textwidth}
		\centering
		\begin{scaletikzpicturetowidth}{\textwidth}
			\begin{tikzpicture}[scale=\tikzscale,font=\large]
\begin{axis}[
xlabel={Iteration},
ylabel={Sample size},
xmajorgrids,
ymajorgrids,
ymode = log,
legend style={fill=white, fill opacity=0.6, draw opacity=1, text opacity=1},
legend pos=north west,
ymax=15000000
]

\addplot[color2, ultra thick] table [x index={0}, y index={1}, col sep=comma,] {./Results/Appendix/statistics_090_theta10_eps001.csv};
\addlegendentry{$\theta = 1.0$};

\addplot[color3, ultra thick] table [x index={0}, y index={1}, col sep=comma] {./Results/Appendix/statistics_090_theta30_eps001.csv};
\addlegendentry{$\theta = 3.0$};

\addplot[color8, ultra thick] table [x index={0}, y index={1}, col sep=comma] {./Results/Appendix/statistics_090_theta50_eps001.csv};
\addlegendentry{$\theta = 5.0$};

\legend{}; %
\end{axis}
\end{tikzpicture}
		\end{scaletikzpicturetowidth}
	\end{minipage}%
	\caption{Numerical results for the portfolio optimization problem~\cref{eq:finance-CVaR} with $\beta=0.9$ using \Cref{alg:alternative_cvar} for the sampling rates $\theta=1.0,3.0,5.0$. \label{fig:sensitivity090}}
\end{figure}

In \Cref{fig:sensitivity000,fig:sensitivity050,fig:sensitivity090}, the influence of the sampling rate $\theta$ becomes visible: the sample size grows faster as $\theta$ decreases. 
The rate of growth of the sample size becomes more pronounced as $\beta$ grows. At the same time, the growth pattern becomes smoother and there are fewer big jumps in the sample size for one iteration to the next.
\phantomsection%
\bibliographystyle{siamplain}
\bibliography{StochasticOptimization}

\end{document}